\let\pa\partial
\let\na\nabla
\let\eps\varepsilon
\newcommand{\N}{{\mathbb N}}
\newcommand{\R}{{\mathbb R}}
\newcommand{\diver}{\operatorname{div}}
\newcommand{\dx}{\mathrm{d}x}
\newcommand{\ds}{\mathrm{d}s}
\newcommand{\dt}{\mathrm{d}t}
\newtheorem{theorem}{Theorem}
\newtheorem{lemma}[theorem]{Lemma}
\newtheorem{remark}[theorem]{Remark}
\newtheorem{corollary}[theorem]{Corollary}
\begin{document}

\title[Nonisothermal Maxwell--Stefan systems]{
	Global existence of weak solutions and  \\
	weak--strong uniqueness for \\
	nonisothermal Maxwell--Stefan systems}

\author[S. Georgiadis]{Stefanos Georgiadis}
\address{Computer, Electrical and Mathematical Science and Engineering Division, King
Abdullah University of Science and Technology (KAUST), Thuwal 23955-6900, Saudi-Arabia}
\email{stefanos.georgiadis@kaust.edu.sa}

\author[A. J\"ungel]{Ansgar J\"ungel}
\address{Institute of Analysis and Scientific Computing, Technische Universit\"at Wien,
Wiedner Hauptstra\ss e 8--10, 1040 Wien, Austria}
\email{juengel@tuwien.ac.at}

\date{}

\thanks{The authors acknowledge partial support from
the Austrian Science Fund (FWF), grants P33010 and F65.
This work has received funding from the European
Research Council (ERC) under the European Union's Horizon 2020 research and
innovation programme, ERC Advanced Grant no.~101018153.}

\begin{abstract}
The dynamics of multicomponent gas mixtures with vanishing barycentric velocity
is described by Maxwell--Stefan equations with mass diffusion and heat conduction.
The equations consist of the mass and energy balances, coupled to an algebraic system
that relates the partial velocities and driving forces.
The global existence of weak solutions to this system in a bounded domain with
no-flux boundary conditions is proved by using the boundedness-by-entropy method.
A priori estimates are obtained from the entropy inequality which originates from
the consistent thermodynamic modeling. Furthermore, the weak--strong uniqueness property
is shown by using the relative entropy method.
\end{abstract}

\keywords{Gas mixture, Maxwell--Stefan equations, nonisothermal model, nonequilibrium
thermodynamics, existence of weak solutions, weak--strong uniqueness.}

\subjclass[2000]{35K51, 76N10, 76R50, 80A17.}

\maketitle


\section{Introduction}

The dynamics of multicomponent gaseous mixtures with vanishing barycentric velocity
and constant temperature can be described by the Maxwell--Stefan equations \cite{Max66,Ste71}. 
The existence of local-in-time smooth and
global-in-time weak solutions to these systems 
has been proved in \cite{Bot11,GiMa98,HMPW17,JuSt13}.
The analysis of {\em nonisothermal} gas mixtures is, however, incomplete.
The existence of local-in-time solutions was shown in \cite{HuSa18}, while
\cite{HeJu21} investigated a special nonisothermal case. In this paper, 
we prove the existence of global-in-time weak solutions and the weak--strong
uniqueness property for a rather general nonisothermal Maxwell--Stefan
system. The novelty of our approach is the consistent thermodynamic modeling.

\subsection{Model equations}

The evolution of the mass densities $\rho_i(x,t)$ of the $i$th gas component and the
temperature $\theta(x,t)$ of the mixture is described by the mass and energy 
balances 
\begin{align}
  & \pa_t\rho_i + \diver J_i = 0, \quad \pa_t(\rho e) + \diver J_e = 0, \quad i=1,\ldots,n, 
	\label{1.mass} \\
	& J_i = \rho_i u_i, \quad J_e = -\kappa(\theta)\na\theta
	+ \sum_{j=1}^n(\rho_j e_j + p_j)u_j\quad\mbox{in }\Omega,\ t>0, \label{1.energy}
\end{align}
where $\Omega\subset\R^3$ is a bounded Lipschitz domain,
$J_i$ and $J_e$ are the diffusion and energy fluxes, respectively,
$u_i$ are the diffusional velocities, $\rho=\sum_{i=1}^n\rho_i$ is the total mass density,
$p_i$ the partial pressure with the total pressure $p=\sum_{i=1}^n p_i$, 
$\rho_i e_i$ the partial internal energy $\rho_ie_i$ with the total energy $\rho e=\sum_{i=1}^n
\rho_i e_i$, and $\kappa(\theta)$ is the heat conductivity.
Equations \eqref{1.mass}--\eqref{1.energy} are supplemented with the boundary and initial 
conditions
\begin{align}
  J_i\cdot\nu = 0, \quad J_e\cdot\nu = \lambda(\theta-\theta_0) 
	&\quad\mbox{on }\pa\Omega,\ t>0, \label{1.bc} \\
  \rho_i(0) = \rho_i^0, \quad \theta(0)=\theta^0 &\quad\mbox{in }\Omega,\
	i=1,\ldots,n, \label{1.ic}
\end{align}
where $\nu$ is the exterior unit normal vector to $\pa\Omega$, $\theta_0>0$ is the given
background temperature, and $\lambda>0$ is a relaxation constant. The boundary conditions
mean that the gas components cannot leave the domain, while heat exchange through
the boundary is possible and proportional to the difference between the gas
and background temperatures. To close the model, we need to
determine $u_i$, $\rho_ie_i$, and $p_i$.
 
The velocities $u_i$ are computed from the constrained algebraic Maxwell--Stefan system
\begin{equation}\label{1.velo}
  -\theta\sum_{j=1}^n b_{ij}\rho_i\rho_j(u_i-u_j) = d_i \quad\mbox{for }
	i=1,\ldots,n, \quad \sum_{i=1}^n\rho_i u_i=0,
\end{equation}
where the constant coefficients $b_{ij}=b_{ji}>0$ model the interaction between the 
$i$th and $j$th components. The driving force $d_i$ is given by 
\begin{equation}\label{1.driving}
  d_i = \rho_i\theta\na\frac{\mu_i}{\theta} - \theta(\rho_i e_i+p_i)\na\frac{1}{\theta},
	\quad i=1,\ldots,n,
\end{equation}
where $\mu_i$ is the chemical potential. The constraint
\begin{equation}\label{1.pressure}
  \na p = 0\quad\mbox{in }\Omega,\ t>0,
\end{equation}
is needed in order for our system to be thermodynamically consistent.
We refer to Section \ref{sec.model} for details.

The internal energies $\rho_ie_i$ and chemical potentials $\mu_i$
are determined from the Helmholtz free energy (see \eqref{2.helm}), 
and the pressure is computed from the Gibbs--Duhem relation.
As shown in Section \ref{sec.model}, these quantities are explicitly given by
\begin{equation}\label{1.constit}
\begin{aligned}
  \mu_i &= \frac{\theta}{m_i}\log\frac{\rho_i}{m_i} - c_w\theta(\log\theta-1), 
	&\quad \rho_ie_i &= c_w\rho_i\theta, \\
	\rho_i\eta_i &= -\frac{\rho_i}{m_i}\bigg(\log\frac{\rho_i}{m_i}-1\bigg)
	+ c_w\rho_i\log\theta, 
	&\quad p_i &= \frac{\rho_i\theta}{m_i}, \quad i=1,\ldots,n,
\end{aligned}
\end{equation}
where $\rho_i\eta_i$ is the entropy density of the $i$th component and $c_w>0$ is the
heat capacity. Then the driving force $d_i$ and energy flux $J_e$ simplify to
\begin{equation}\label{1.Je}
  d_i = \frac{\na(\rho_i\theta)}{m_i}, \quad
  J_e = -\kappa\na\theta + \theta\sum_{i=1}^n\frac{\rho_i u_i}{m_i}.
\end{equation}

The matrix $M$ associated to the algebraic system \eqref{1.velo} is singular 
(since $\sum_{i=1}^n d_i=0$) and thus
not positive definite. However, we recall in Section \ref{sec.inv} that it is positive
definite on the subspace $L=\{\bm{y}=(y_1,\ldots,y_n)\in\R^n:\sqrt{\bm{\rho}}\cdot\bm{y}=0\}$
(here, $\sqrt{\bm{\rho}}$ is the vector with components $\sqrt{\rho_i}$).
Therefore, the Bott--Duffin inverse of $M$, denoted by $M^{BD}=M^{BD}(\bm\rho)$, 
exists and is symmetric and positive definite on $L$.
Moreover, we show in Section \ref{sec.flux} below that the fluxes can be expressed as
a linear combination of the entropy variables (or thermo-chemical potentials)
$\bm\mu/\theta=(\mu_1/\theta,\ldots,\mu_n/\theta)$ and $-1/\theta$,
\begin{equation}\label{1.onsager}
  \begin{pmatrix} \bm{J} \\ J_e \end{pmatrix}
	= -Q(\bm\rho,\theta)\begin{pmatrix} \bm{\mu}/\theta \\ -1/\theta \end{pmatrix},
	\quad\mbox{where }
	Q(\bm\rho,\theta) = \begin{pmatrix}
	A & \bm{B} \\ \bm{B}^T & a \end{pmatrix},
\end{equation}
and $A=(A_{ij})\in\R^{n\times n}$, $\bm{B}=(B_i)\in\R^n$, $a>0$ are given by
\begin{equation}\label{1.AB}
  A_{ij}(\bm\rho) = M_{ij}^{BD}\sqrt{\rho_i\rho_j}, \quad
	B_i(\bm\rho,\theta) = \theta\sum_{j=1}^n\frac{A_{ij}}{m_j}, \quad
	a(\bm\rho,\theta) = \theta^2\bigg(\kappa + \sum_{i,j=1}^n\frac{A_{ij}}{m_im_j}\bigg).
\end{equation}
Here, variables in bold font are $n$-dimensional vectors. 
The Onsager matrix $Q$ turns out to be positive semidefinite (see \eqref{3.semidefQ}),
which reveals the parabolic structure of equations \eqref{1.mass}--\eqref{1.energy}.

\subsection{State of the art}

The isothermal Maxwell--Stefan equations can be derived from the multispecies 
Boltzmann equations in the diffusive approximation \cite{BGPS13}. The high-friction
limit in Euler (--Korteweg) equations reveals a formal gradient-flow form of the
Maxwell--Stefan equations \cite{HJT19}, leading to Fick--Onsager diffusion fluxes
instead of \eqref{1.velo}. In fact, it is shown in \cite{BoDr23} that the
Fick--Onsager and generalized Maxwell--Stefan approaches are equivalent.
A formal Chapman--Enskog expansion of the stationary nonisothermal model was given
in \cite{TaAo99}. Another nonisothermal Maxwell--Stefan system was derived in \cite{ABSS20}, 
but with a different energy flux than ours.

Maxwell--Stefan systems with nonvanishing barycentric velocities can be formulated
in the framework of hyperbolic--parabolic systems, 
which allows one to perform a local-in-time
existence analysis \cite{GiMa98}. Global-in-time regular solutions around the
constant equilibrium state were found to exist in \cite{GiMa98a}.
An existence analysis for Maxwell-Stefan systems coupled to the Navier--Stokes equations
for the barycentric velocity can be found in \cite{ChJu15} for the incompressible case
and in \cite{BoDr21} for the compressible situation.
For steady-state problems, we refer to, e.g., \cite{BJPZ22,MPZ15}. 

When the barycentric velocity vanishes, the (isothermal) Maxwell--Stefan equations can be solved
by generalized parabolic theory. The existence of local-in-time classical solutions was proved in
\cite{Bot11}, while the existence of global-in-time weak solutions with general initial
data was shown in \cite{JuSt13}. Concerning the nonisothermal equations, we refer to
\cite{HeJu21}, where an existence analysis for global-in-time weak solutions was presented.
However, this model has some modeling deficiencies explained below. Therefore,
our first aim is to prove the global existence for a thermodynamically consistent 
nonisothermal model.

The uniqueness of strong solutions to the isothermal Maxwell--Stefan equations was
shown in \cite{Bot11,HMPW17,HuSa18}, but the uniqueness of weak solutions for general
coefficients $b_{ij}$ is still unsolved.
A very special case (the coefficients $b_{ij}$ have two degrees of freedom only) 
was investigated in \cite{ChJu18}. It was shown in \cite{HJT22}
that strong solutions are unique in the class of weak solutions, which is known
as the weak--strong uniqueness property. Our second aim is to prove this property for
the nonisothermal case.

Let us detail the main differences of our work compared to \cite{HeJu21}:
\begin{itemize}
\item[(i)] The most important difference is the lack of validity of the Onsager
reciprocity relations in the model of \cite{HeJu21}. The relations imply
the symmetry of the coefficients of the Onsager matrix; see \eqref{1.onsager}. 
The choice in \cite{HeJu21} leads to a cancelation in the entropy inequality, 
thus simplifying the estimation. Our results do not rely on this simplification; 
see Remark \ref{rem.cross} for further details.
\item[(ii)] The constraint \eqref{1.pressure} on the pressure
is not taken into account in \cite{HeJu21}.
This condition is not necessary mathematically, but its lack creates an inconsistency
with the assumption of vanishing barycentric velocity. Indeed, a difference in pressure
induces a force difference, which can result in an acceleration according to Newton's 
second law, if there is no additional force to balance it. 
\item[(iii)] According to Onsager's reciprocity relations, the Onsager matrix $Q$ in 
\eqref{1.onsager} has to be positive semidefinite. We show that $Q$ is in fact positive
definite on the subspace $L=\{\bm{y}\in\R^n:\bm{y}\cdot\sqrt{\bm{\rho}}=0\}$. In \cite{HeJu21},
is is {\em assumed} that this subspace equals $\{\bm{y}\in\R^n:\bm{y}\cdot\bm{1}=0\}$.
This is not consistent with the thermodynamic modeling.
\item[(iv)] We consider different molar masses $m_i$, while they are assumed to be the
same in \cite{HeJu21}. When we assume equal molar masses, the cross-terms cancel, 
and we end up with the simple heat flux $J_e=-\kappa\na\theta$ (see \eqref{1.Je} and
the constraint in \eqref{1.velo}), thus decoupling the equations.
\end{itemize}


\subsection{Main results}

We impose the following assumptions:
\begin{itemize}
\item[(A1)] Domain: $\Omega\subset\R^3$ is a bounded domain with Lipschitz boundary, 
and $T>0$. We set $\Omega_T=\Omega\times(0,T)$ and $\R_+=[0,\infty)$. 
\item[(A2)] Data: $\rho_i^0\in L^\infty(\Omega)$ satisfies $\rho_i^0\ge 0$ in $\Omega$ and
$0 < \rho_* \le \sum_{i=1}^n \rho_i^0\le\rho^*$ in $\Omega$ for some $\rho_*,\rho^*>0$
and for all $i=1,\ldots,n$; $\theta^0\in L^\infty(\Omega)$ satisfies
$\inf_\Omega\theta^0>0$.
\item[(A3)] Coefficients: $b_{ij}=b_{ji}>0$ for all $i,j=1,\ldots,n$.
\item[(A4)] Heat conductivity: $\kappa\in C^0(\R_+^n\times\R_+)$ satisfies
$c_\kappa(1+\theta^2) \le\kappa(\theta)\le C_\kappa(1+\theta^2)$ for some 
$c_\kappa,C_\kappa>0$ and all $(\bm\rho,\theta)\in\R_+^n\times\R_+$.
\end{itemize}

The lower bound for the total mass density $\rho$ is needed to derive uniform estimates for the temperature.
The proof of Lemma 10 in \cite{HJT22} shows that $M_{ij}^{BD}(\bm\rho)$ is bounded
for all $\bm\rho\in\R_+^n$.
The growth condition for the heat conductivity is used to derive higher
integrability bounds for the temperature, which are needed to derive a uniform estimate
for the discrete time derivative of the temperature. We may also assume reaction terms $R_i$
in \eqref{1.mass} with the properties that the total reaction rate $\sum_{i=1}^nR_i$ 
vanishes and the vector of reaction rates $R_i$ is derived from a convex, nonnegative 
potential \cite[Section 2.2]{DDGG20}. 

The first main result is the existence of solutions.

\begin{theorem}[Existence of weak solutions]\label{thm.ex}
Let Assumptions (A1)--(A4) hold. Then there exists a weak solution to
\eqref{1.mass}--\eqref{1.constit} satisfying $\rho_i>0$, $\theta>0$ a.e.\ in
$\Omega_T=\Omega\times(0,T)$ and
\begin{align*}
  & \sqrt{\rho_i}\in L^\infty(\Omega_T)\cap C^0([0,T];L^2(\Omega))\cap 
	L^2(0,T;H^1(\Omega)), \quad
	\pa_t\rho_i\in L^2(0,T;H^1(\Omega)^*), \\
	& \theta\in C^0_w([0,T];L^2(\Omega))\cap L^2(0,T;H^1(\Omega)), \quad
	\pa_t(\rho\theta)\in L^{16/11}(0,T;W^{1,16/11}(\Omega)^*), \\
	& \theta^2,\, \log\theta\in L^2(0,T;H^1(\Omega)), \quad i=1,\ldots,n,
\end{align*}
the weak formulation 
\begin{align*}
  & \int_0^T\langle\pa_t\rho_i,\phi_i\rangle_{H^1(\Omega)^*}\dt 
	+ \int_0^T\int_\Omega\sum_{i,j=}^n
	M_{ij}^{BD}\big(2\na\sqrt{\rho_j}+\rho_j\na\log\theta\big)\dx\dt = 0, \\
	& \int_0^T\int_\Omega\langle\pa_t(\rho\theta),\phi_0\rangle_{W^{1,16/5}(\Omega)^*}\dt 
	+ \int_0^T\int_\Omega\sum_{i,j=1}^n\frac{\theta M_{ij}^{BD}}{m_im_j}\sqrt{\rho_i}
	\big(2\na\sqrt{\rho_j}+\sqrt{\rho_j}\na\log\theta\big)\cdot\na\phi_0\dx\dt \\
	&\phantom{xx}{}+ \int_0^T\int_\Omega\kappa\na\theta\cdot\na\phi_0\dx\dt
	= \lambda\int_0^T\int_{\pa\Omega}(\theta_0-\theta)\phi_0 \ds\dt
\end{align*}
holds for all $\phi_1,\ldots,\phi_n\in L^2(0,T;H^1(\Omega))$ and $\phi_0\in
L^{16/5}(0,T;W^{1,16/5}(\Omega)^*)$, 
and the initial conditions \eqref{1.ic} are satisfied in the sense 
$\rho_i(0)=\rho_i^0$ in $L^2(\Omega)$ and $\theta(0)=\theta^0$ weakly in $L^2(\Omega)$.
\end{theorem}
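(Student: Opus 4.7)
The plan is to apply the boundedness-by-entropy method adapted to the nonisothermal setting. The starting point is to recast the system \eqref{1.mass}--\eqref{1.constit} in terms of entropy variables. Using the entropy density $h(\bm\rho,\theta) = \sum_i(\rho_i/m_i)(\log(\rho_i/m_i)-1) - c_w\sum_i\rho_i(\log\theta - 1)$ (consistent with \eqref{1.constit}), I introduce $w_i = \mu_i/\theta$ for $i=1,\ldots,n$ and $w_0 = -1/\theta$. The map $(\bm\rho,\theta)\mapsto(\bm w,w_0)$ is a diffeomorphism from $(0,\infty)^n\times(0,\infty)$ onto its image, and working with $(\bm w,w_0)$ as the primary unknowns built-in enforces positivity of $\rho_i$ and $\theta$. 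In these variables \eqref{1.onsager} gives $\partial_t\bm u + \operatorname{div}(Q\nabla(\bm w,w_0)) = 0$ with $\bm u=(\bm\rho,\rho e)$ and $Q$ positive semidefinite, with positive definiteness on $L$ together with the pressure constraint \eqref{1.pressure} encoding the needed degeneracy.

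First I would set up a regularized time-discrete scheme. Partition $(0,T)$ into $N$ steps of size $\tau = T/N$ and, given $\bm u^{k-1}$, seek $(\bm w^k,w_0^k)\in H^1(\Omega)^{n+1}$ solving the elliptic system
\begin{equation*}
\int_\Omega\frac{\bm u(\bm w^k,w_0^k)-\bm u^{k-1}}{\tau}\cdot\bm\phi\,\dx
+ \int_\Omega\nabla\bm\phi : Q\nabla(\bm w^k,w_0^k)\,\dx
+ \varepsilon\int_\Omega\bigl(\nabla\bm w^k\cdot\nabla\bm\phi + \bm w^k\cdot\bm\phi\bigr)\dx
= \text{bdry}(\bm\phi),
\end{equation*}
with the boundary contribution from $\lambda(\theta-\theta_0)$. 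The extra $\varepsilon$-regularization restores coercivity on all of $H^1(\Omega)^{n+1}$, which is needed since $Q$ is only degenerate-semidefinite. Existence at each step follows from a Leray--Schauder fixed-point argument applied to the linearization obtained by freezing $Q$ at a given $\bm w^*$; the compactness comes from $H^1\hookrightarrow L^2$, while the continuity/boundedness of $M^{BD}(\bm\rho)$ recalled in the assumptions bounds $Q$.

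Next comes the discrete entropy inequality. Testing the scheme with $(\bm w^k,w_0^k)$ and using the convexity inequality $h(\bm u^k)-h(\bm u^{k-1})\le(\bm w^k,w_0^k)\cdot(\bm u^k-\bm u^{k-1})$ yields, after summation,
\begin{equation*}
\int_\Omega h(\bm u^N)\,\dx
+ \sum_{k=1}^N\tau\int_\Omega\nabla(\bm w^k,w_0^k):Q\nabla(\bm w^k,w_0^k)\,\dx
+ \varepsilon\sum_{k=1}^N\tau\|(\bm w^k,w_0^k)\|_{H^1}^2
\le C.
\end{equation*}
Positive definiteness of $M^{BD}$ on $L$ translates the first dissipation term into uniform bounds on $\nabla\sqrt{\rho_i^{(\tau)}}$ in $L^2(\Omega_T)$; the $w_0$-part gives $\nabla\log\theta^{(\tau)}\in L^2$; and the growth $\kappa\ge c_\kappa(1+\theta^2)$ from (A4), combined with the coefficient $a$ in \eqref{1.AB}, upgrades this to $\theta^{(\tau)}\in L^2(0,T;H^1(\Omega))$ and $\bigl(\theta^{(\tau)}\bigr)^2\in L^2(0,T;H^1(\Omega))$. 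The boundary term absorbs via Young's inequality using $\lambda>0$ and the $\theta_0$-data. From the mass and energy balance equations I can then estimate the discrete time derivatives of $\rho_i^{(\tau)}$ in $L^2(H^1(\Omega)^*)$ and of $(\rho\theta)^{(\tau)}$ in the dual of $W^{1,16/5}$, the exponent $16/11$ arising from interpolating $\theta\in L^\infty(L^1)\cap L^2(H^1)\hookrightarrow L^{10/3}$ in three space dimensions against the flux $\rho u\in L^{16/5}$.

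Finally, an Aubin--Lions argument with piecewise-constant interpolants provides strong convergence $\sqrt{\rho_i^{(\tau)}}\to\sqrt{\rho_i}$ in $L^2(\Omega_T)$ (hence $\rho_i^{(\tau)}\to\rho_i$ a.e.) and $(\rho\theta)^{(\tau)}\to\rho\theta$ in $L^1$, which together with the weak gradient bounds and $\rho_i>0$ a.e.\ (from the entropy bound on $\log(\rho_i/m_i)$) identifies $\theta$ pointwise. Continuity of $\bm\rho\mapsto M^{BD}(\bm\rho)$, recalled in the comments after (A4), lets me pass to the limit $\tau\to 0$ and $\varepsilon\to 0$ in every nonlinear flux term of the weak formulation, and weak lower semicontinuity delivers the entropy inequality for the limit. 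The initial conditions are recovered through the weak continuity $\rho_i,\rho\theta\in C^0_w([0,T];L^2(\Omega))$ inherited from the time-derivative bounds. The main obstacle I expect is precisely the last step: establishing enough compactness of $\theta^{(\tau)}$ to identify the limits of $\kappa(\theta^{(\tau)})\nabla\theta^{(\tau)}$ and $\theta^{(\tau)}\rho_i^{(\tau)}u_i^{(\tau)}/m_i$, which is exactly what forces the growth assumption (A4) and the use of the $\theta^2$-regularity.
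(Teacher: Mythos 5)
Your overall architecture (time discretization, entropy inequality, Aubin--Lions) matches the paper, but there are two structural gaps that would make the argument fail as written.

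First, you parametrize by \emph{all} $n$ chemical potentials $\mu_i/\theta$ plus a temperature variable and then add an elliptic regularization $\varepsilon\int_\Omega(\nabla w_i\cdot\nabla\phi_i + w_i\phi_i)\,\dx$ in every equation. The key structural fact in this problem is that $\sum_{i}\rho_i u_i = 0$ forces $\partial_t\rho = 0$, so $\rho(x) = \sum_i\rho_i^0(x)$ is a fixed datum; the paper exploits this by solving only for the $n-1$ relative entropy variables $w_i = (\mu_i-\mu_n)/\theta$ and recovering $\rho_n = \rho - \sum_{i<n}\rho_i$ algebraically. Lemma~\ref{lem.inv} then shows that, given the fixed $\rho$, the map $(w_1,\ldots,w_{n-1},w) \mapsto (\bm\rho,\theta)$ produces $0<\rho_i<\rho$, and this is \emph{the} mechanism that yields the $L^\infty(\Omega_T)$ bound on $\sqrt{\rho_i}$ claimed in the theorem. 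With your $n+1$ unconstrained unknowns, the regularization terms $\varepsilon\Delta w_i$ do not sum to zero, so $\rho$ is no longer conserved at the approximate level and the boundedness-by-entropy mechanism breaks: the entropy density $\sum_i(\rho_i/m_i)(\log(\rho_i/m_i)-1)$ only controls $\rho_i\log\rho_i$ in $L^1$, which is far from $L^\infty$. You would also need a substitute for the $H^2$-plus-$W^{1,4}$ regularization that the paper uses to make the fixed-point operator compact on $W^{1,4}(\Omega;\R^n)$; a plain $H^1$ regularization is not obviously enough for the linearization to close.

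Second, you claim that the entropy dissipation combined with the growth assumption $\kappa \ge c_\kappa(1+\theta^2)$ ``upgrades'' the bound to $\theta^2 \in L^2(0,T;H^1(\Omega))$. That is not true: the entropy dissipation term is $\kappa\,|\nabla\log\theta|^2 \ge c_\kappa(1+\theta^2)\theta^{-2}|\nabla\theta|^2$, which gives $\nabla\log\theta$ and $\nabla\theta$ in $L^2(\Omega_T)$ but \emph{not} $\theta\,\nabla\theta = \tfrac12\nabla\theta^2$ in $L^2$. The paper derives this separately in Lemma~\ref{lem.temp} by testing the discrete energy equation with $\theta$ itself (a non-entropy test function), which produces $\int_\Omega(1+\theta^2)|\nabla\theta|^2\,\dx$ on the left. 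That extra estimate is also what gives $\theta\in L^\infty(0,T;L^2(\Omega))$, which together with $\theta^2\in L^2(0,T;H^1)$ yields $\theta\in L^{16/3}(\Omega_T)$ via Gagliardo--Nirenberg applied to $\theta^2$. Your interpolation $L^\infty(L^1)\cap L^2(H^1)\hookrightarrow L^{10/3}$ is in any case incorrect (that space embeds into $L^{8/3}$ in $d=3$; $L^{10/3}$ requires $L^\infty(L^2)$), and without the $L^{16/3}$ bound the time-derivative estimate on $\rho\theta$ and the exponents $16/11$, $16/5$ in the theorem cannot be reached.
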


The idea of the proof is to apply the boundedness-by-entropy method, which automatically
yields $L^\infty(\Omega_T)$ bounds \cite{Jue16}. 
More precisely, we formulate system \eqref{1.mass}--\eqref{1.energy} in terms of
the relative entropy variables $(\mu_i-\mu_n)/\theta$ for $i=1,\ldots,n-1$
and $\log\theta$. We show in Lemma \ref{lem.inv} that this defines the mass densities 
and temperature uniquely as a function of $(w_1,\ldots,w_{n-1},w)$. We introduce
the mathematical entropy density
$$
  h(\bm\rho',\theta) = \sum_{i=1}^{n}\frac{\rho_i}{m_i}\bigg(\log\frac{\rho_i}{m_i}-1\bigg)
	- c_w\rho\log\theta,
$$
where the $n$th partial mass density
is computed from $\rho_n=\rho-\sum_{i=1}^{n-1}\rho_i$, i.e., $h$ depends on
$\bm\rho'=(\rho_1,\ldots,\rho_{n-1})$ and $\theta$. Gradient estimates for
$(\bm\rho,\theta)$ are first derived from the entropy equality
$$
  \frac{\mathrm{d}}{\dt}\int_\Omega h(\bm\rho',\theta)\dx + \int_\Omega\frac{\kappa}{\theta^2}
	|\na\theta|^2\dx + \sum_{i,j=1}^n\int_\Omega M_{ij}^{BD}\frac{d_i}{\theta\sqrt{\rho_i}}
	\frac{d_j}{\theta\sqrt{\rho_j}}\dx = 0,
$$
which becomes an inequality for weak solutions.
Second, as in \cite{HeJu21}, the energy balance equation \eqref{1.energy} yields 
a bound for $\theta^2$ in $L^2(0,T;H^1(\Omega))$.
As mentioned before, the derivation of the entropy inequality differs from that
one in \cite{HeJu21}, because the cross-term
$$
  I_5 = 2\int_\Omega\sum_{i=1}^{n-1}\frac{B_i}{\theta}\na\frac{\mu_i-\mu_n}{\theta}
	\cdot\na\log\theta\dx,
$$
which cancels out in \cite{HeJu21}, 
needs to be controlled. (We recall definition \eqref{1.AB} of $B_i$.)
This is done by observing that the sum $I_4+I_5+I_8$ (see \eqref{3.I458})
is nonnegative,
$$
  I_4+I_5+I_8 = \int_\Omega\sum_{i,j=1}^n A_{ij}
	\na\bigg(\frac{\mu_i}{\theta}+\frac{1}{m_i}\log\theta\bigg)\cdot
	\na\bigg(\frac{\mu_j}{\theta}+\frac{1}{m_j}\log\theta\bigg)\dx \ge 0,
$$
as $(A_{ij})$ is positive semidefinite due to \eqref{3.semidefQ}.

From a technical viewpoint, we approximate equations \eqref{1.mass}--\eqref{1.energy}
by replacing the time derivative by the implicit Euler discretization to avoid issues
with the time regularity and by adding a higher-order regularization 
to achieve $H^2(\Omega)$ and hence $L^\infty(\Omega)$ regularity for the entropy variables.
The approximation is chosen in such a way that a discrete
entropy inequality can be derived, yielding uniform estimates for both
the compactness of the fixed-point operator (to obtain a solution to the approximate
problem) and the de-regularization limit (to obtain a solution to the original problem).

Our second main result concerns the weak--strong uniqueness property.

\begin{theorem}[Weak--strong uniqueness]\label{thm.wsu}
Let the assumptions of Theorem \ref{thm.ex} hold, 
let $\lambda=0$ in \eqref{1.bc}, let $(\bm\rho,\theta)$ be a weak solution
and $(\bar{\bm{\rho}},\bar\theta)$ be
a strong solution to \eqref{1.mass}--\eqref{1.constit}. We assume that there
exist $m,M>0$ such that
$$
  0<\rho_i\le\rho^*,\quad 0<\theta\le M,\quad 0<\bar{\rho}_i\le\rho^*,\quad
	0<m\le\bar\theta\le M\quad\mbox{in }\Omega_T.
$$
Furthermore, we suppose that $\bar{u}_i$, $|\na\log\bar\theta|\in L^\infty(\Omega_T)$
for $i=1,\ldots,n$ and that the thermal conductivity $\kappa$ is Lipschitz continuous.
If the initial data of $(\bm\rho,\theta)$ and $(\bar{\bm{\rho}},\bar\theta)$
coincide then $\bm\rho(x,t)=\bar{\bm{\rho}}(x,t)$ and 
$\theta(x,t)=\bar\theta(x,t)$ for a.e.\ $x\in\Omega$ and all $t>0$.
\end{theorem}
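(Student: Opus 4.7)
\emph{Relative entropy setup.} My plan is to apply the relative entropy method, extending the isothermal argument of \cite{HJT22} to the present thermodynamically consistent nonisothermal setting. Based on the mathematical entropy density $h(\bm\rho',\theta)$ used in the existence proof, I set
\[
\mathcal{H}[\bm\rho,\theta\,|\,\bar{\bm\rho},\bar\theta](t) := \int_\Omega\Big\{h(\bm\rho',\theta)-h(\bar{\bm\rho}',\bar\theta) - Dh(\bar{\bm\rho}',\bar\theta)\cdot\big((\bm\rho',\theta)-(\bar{\bm\rho}',\bar\theta)\big)\Big\}\,\dx.
\]
The strict convexity of $h$ on the admissible state space, equivalent to the invertibility of the entropy-variable map recorded in Lemma~\ref{lem.inv}, gives $\mathcal{H}\ge 0$, with equality if and only if the two states agree; and $\mathcal{H}(0)=0$ by the identical-initial-data hypothesis.

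\emph{Time derivative.} To compute $\frac{d}{dt}\mathcal{H}$ I will control $\frac{d}{dt}\int h(\bm\rho',\theta)\,\dx$ by the weak entropy inequality (with $\lambda=0$ there is no boundary contribution), $\frac{d}{dt}\int h(\bar{\bm\rho}',\bar\theta)\,\dx$ by the corresponding equality available for the strong solution, and the mixed term by testing the weak formulations of \eqref{1.mass}--\eqref{1.energy} for each solution against the dual variables at the reference state, namely $(\mu_i-\mu_n)/\bar\theta\big|_{(\bar{\bm\rho},\bar\theta)}$ for the mass equations and $-1/\bar\theta$ for the energy equation. Their admissibility as test functions is ensured by the assumed $L^\infty$ bounds on $\bar u_i$, $\nabla\log\bar\theta$ and the strict positivity $\bar\theta\ge m>0$; since both solutions satisfy $\nabla p=0$, no spurious pressure contribution arises.

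\emph{Dissipation and Gronwall.} After systematic cancellation I expect to arrive at
\[
\mathcal{H}(t) + \int_0^t\mathcal{D}(s)\,\ds \le \int_0^t\mathcal{R}(s)\,\ds,
\]
with nonnegative relative dissipation
\[
\mathcal{D}(t) = \int_\Omega\Big\langle \nabla(\bm{w}-\bar{\bm{w}}),\, Q(\bar{\bm\rho},\bar\theta)\nabla(\bm{w}-\bar{\bm{w}})\Big\rangle\,\dx \ge 0, \qquad \bm{w}=\Big(\frac{\bm\mu}{\theta},-\frac{1}{\theta}\Big),
\]
by the positive semidefiniteness of $Q$ (cf.~\eqref{1.onsager}). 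Using the $L^\infty$ bounds on $\rho_i$, $\bar\rho_i$ and $\theta$, the lower bound $\bar\theta\ge m>0$, the $L^\infty$ bounds on $\bar u_i$ and $\nabla\log\bar\theta$, and the Lipschitz continuity of $\kappa$, each contribution to $\mathcal{R}$ will be estimated by $C\mathcal{H}(t) + \tfrac12\mathcal{D}(t)$; absorbing half of the dissipation and invoking Gronwall's lemma forces $\mathcal{H}\equiv 0$, whence $(\bm\rho,\theta)=(\bar{\bm\rho},\bar\theta)$ a.e.

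\emph{Main obstacle.} The hardest part will be the cross-coupling between mass diffusion and heat conduction through the off-diagonal block $\bm{B}$ of $Q$ (cf.~\eqref{1.AB}) --- precisely what is lost in the simpler setup of \cite{HeJu21}. Since $Q$ is only positive semidefinite, and definite only on $L=\{\bm y\cdot\sqrt{\bm\rho}=0\}$, the cross terms cannot be estimated in isolation; they must be grouped with the diagonal blocks $A$ and $a$ and treated via the positive semidefiniteness of the full block $Q$, using that the relevant increments of the entropy flux lie in $L$ thanks to $\sum_i\rho_i u_i=0$. A secondary technical point is to dominate the quadratic remainders $|\rho_i-\bar\rho_i|^2$ and $|\theta-\bar\theta|^2$ by $\mathcal{H}$ itself, which follows from the uniform bounds on $(\bm\rho,\theta,\bar{\bm\rho},\bar\theta)$ and the explicit logarithmic form of $h$.
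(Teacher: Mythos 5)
Your overall strategy — relative entropy around $h$, subtract the linearisation, obtain a dissipation you can absorb, Gronwall — is exactly the framework the paper follows (Lemmas \ref{lem.eis}--\ref{lem.rei}), and your choice of test functions (entropy variables evaluated at the strong state) is the right one. However, two ingredients of the plan as written are wrong in ways that would derail the computation, and both are precisely where the nonisothermal coupling bites.

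First, the anticipated form of the relative dissipation is inaccurate. You propose
$\mathcal{D}=\int_\Omega\langle\nabla(\bm w-\bar{\bm w}),\,Q(\bar{\bm\rho},\bar\theta)\nabla(\bm w-\bar{\bm w})\rangle\,\dx$,
i.e.\ a quadratic form in entropy-variable gradient differences with the Onsager matrix frozen at the \emph{strong} state. What actually comes out of the computation carries the \emph{weak} solution's coefficients — the weak entropy inequality contributes $-\langle\nabla\bm w, Q(\bm\rho,\theta)\nabla\bm w\rangle$, and after completing the square one finds $-\langle\nabla(\bm w-\bar{\bm w}),Q(\bm\rho,\theta)\nabla(\bm w-\bar{\bm w})\rangle$ plus commutator terms involving $Q(\bm\rho,\theta)-Q(\bar{\bm\rho},\bar\theta)$, which are \emph{not} small in any norm you control. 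The paper sidesteps this entirely by never keeping the dissipation in entropy-variable form: via the identity $\sum_{i,j} A_{ij}\nabla(q_i+w/m_i)\cdot\nabla(q_j+w/m_j)=\tfrac12\sum_{i,j} b_{ij}\rho_i\rho_j|u_i-u_j|^2$ (Corollary \ref{coro.flux} plus \eqref{5.uidi}) the dissipation is rewritten in terms of \emph{velocity differences}. The relative dissipation that is then absorbed is
$\tfrac12\sum_{i,j} b_{ij}\rho_i\rho_j|(u_i-\bar u_i)-(u_j-\bar u_j)|^2+\kappa(\theta)|\nabla(\log\theta-\log\bar\theta)|^2$,
with coefficients at the weak state. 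Velocities depend on $\bm\rho$ only through the nonlinear map $M^{BD}(\bm\rho)$, so working with velocity differences absorbs the $\bm\rho$-dependence of the mobility and avoids the commutator entirely. You should reorganise the computation accordingly before attempting the estimates.

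Second, the claim that ``the relevant increments of the entropy flux lie in $L$ thanks to $\sum_i\rho_iu_i=0$'' is false, and glossing over this would produce an incorrect proof. The subspace $L=L(\bm\rho)=\{\bm y:\sqrt{\bm\rho}\cdot\bm y=0\}$ is weighted by the \emph{weak} density, while for $\bm Y=(\sqrt{\rho_i}(u_i-\bar u_i))_i$ one has
$\sqrt{\bm\rho}\cdot\bm Y=\sum_i\rho_i(u_i-\bar u_i)=-\sum_i(\rho_i-\bar\rho_i)\bar u_i$,
which does not vanish because it is $\sum_i\bar\rho_i\bar u_i=0$, not $\sum_i\rho_i\bar u_i=0$, that holds. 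Consequently $\bm Y\notin L(\bm\rho)$ in general. The paper handles this at \eqref{5.left}: one uses $\bm Y^TM(\bm\rho)\bm Y\ge\mu_M|P_L\bm Y|^2$ and then expands $|P_L\bm Y|^2=|\bm Y|^2-\rho^{-1}|\sum_j(\rho_j-\bar\rho_j)\bar u_j|^2$, pushing the second term to the right-hand side where it is controlled by $C\sum_j(\rho_j-\bar\rho_j)^2\le C\mathcal{H}$ using the $L^\infty$ bound on $\bar u_j$ and $\rho\ge\rho_*$. This additional error term is essential and is missing from your plan.

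The remaining ingredients you list — quadratic lower bound on $\mathcal{H}$ (needing $\bar\theta\ge m>0$ and \cite[Lemma 16]{HJT22}), Young's inequality with the Lipschitz continuity of $\kappa$, final Gronwall — match the paper and are fine.
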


By a strong solution, we understand a solution that has sufficient regularity to
satisfy the entropy equality stated in Lemma \ref{lem.eis}; see Section \ref{sec.wsu}.
Observe that we require the boundedness of the temperature $\theta$, which is not
proved in Theorem \ref{thm.ex}.
The proof of Theorem \ref{thm.wsu} is based on the relative entropy, defined by
\begin{align}
  H(\bm\rho,\theta|\bar{\bm{\rho}},\bar\theta)
	&= \int_\Omega\bigg(h(\bm\rho,\theta) - h(\bar{\bm{\rho}},\bar\theta)
	- \sum_{i=1}^n\frac{\pa h}{\pa\rho_i}(\bar{\bm{\rho}},\bar\theta)(\rho_i-\bar\rho_i)
	- \frac{\pa h}{\pa E}(\bar{\bm{\rho}},\bar\theta)(E-\bar{E})\bigg)\dx \nonumber \\
	&= \int_\Omega\bigg\{\sum_{i=1}^n\frac{1}{m_i}\bigg(\rho_i\log\frac{\rho_i}{\bar\rho_i}
	- (\rho_i-\bar\rho_i)\bigg) - c_w\rho\bigg(\log\frac{\theta}{\bar\theta}
	- (\theta-\bar\theta)\bigg)\bigg\}\dx, \label{1.re}
\end{align}
where $E=c_w\rho\theta$ and $\bar{E}=c_w\rho\bar\theta$ are the internal energy densities. 
The idea is to compute the time derivative:
\begin{align*}
  \frac{\mathrm{d}H}{\dt}(\bm\rho,\theta|\bar{\bm{\rho}},\bar\theta)
	&+ c\int_\Omega\sum_{i=1}^n\rho_i|u_i-\bar{u}_i|^2\dx
	+ c\int_\Omega|\na(\log\theta-\log\bar\theta)|^2\dx \\
	&\le C\int_\Omega\bigg(\sum_{i=1}^n(\rho_i-\bar\rho_i)^2 + (\theta-\bar\theta)^2\bigg)\dx,
\end{align*}
where $c>0$ is some constant and 
$C>0$ depends on the $L^\infty(\Omega_T)$ norms of $\theta$, $\bar{u}_i$, and
$\na\log\bar\theta$, $i=1,\ldots,n$. The difficulty is to estimate the expressions
arising from the time derivative of the relative entropy in such a way that 
only $\bar{u}_i$ and $\bar\theta$ need to be bounded.
Thanks to the positive lower bound for
$\bar\theta$, we can bound the right-hand side in terms of the relative entropy,
$$
  \int_\Omega\bigg(\sum_{i=1}^n(\rho_i-\bar\rho_i)^2 + (\theta-\bar\theta)^2\bigg)\dx 
	\le \int_\Omega H(\bm\rho,\theta|\bar{\bm{\rho}},\bar\theta)\dx.
$$
Then Gronwall's lemma shows that
$H((\bm\rho,\theta)(t)|(\bar{\bm{\rho}},\bar\theta)(t))=0$ for $t>0$ and hence
$(\bm\rho,\theta)(t)=(\bar{\bm{\rho}},\bar\theta)(t)$. 
Compared to \cite{HJT22}, we include the temperature
terms and combine them with the entropy variables $w_i$ in such a way that
the positive semidefiniteness of $M^{BD}$ can be exploited.

The paper is organized as follows. We detail the thermodynamic modeling of equations
\eqref{1.mass}--\eqref{1.constit} in Section \ref{sec.model}. The inversion of the
Maxwell--Stefan system \eqref{1.velo}, the definition of the (relative) entropy variables,
and the formulations of the fluxes in terms of the relative entropy variables, as well
as the corresponding weak formulation is presented in Section \ref{sec.prep}. 
Section \ref{sec.ex} is concerned with the proof of Theorem \ref{thm.ex}, and 
Theorem \ref{thm.wsu} is proved in Section \ref{sec.wsu}.


\section{Modeling}\label{sec.model}

We consider the following system of equations modeling the dynamics of a nonisothermal
gas mixture of $n$ components with mass diffusion and heat conduction:
\begin{align}
  & \pa_t\rho_i + \diver(\rho_i(v+u_i)) = 0, \quad i=1,\ldots,n, \label{2.mass} \\
	& \pa_t(\rho v) + \diver(\rho v\otimes v) = \rho b - \na p, \label{2.mom} \\
	& \pa_t\bigg(\rho e+\frac12\rho|v|^2\bigg) 
	+ \diver\bigg(\bigg(\rho e+\frac12\rho|v|^2\bigg)v\bigg) \label{2.energy} \\
	&\phantom{xx}{}= \diver(\kappa\na\theta) - \diver\sum_{j=1}^n(\rho_ie_i+p_i)u_i
	- \diver(pv) + \rho r + \rho b\cdot v + \sum_{i=1}^n\rho_i b_i\cdot u_i. \nonumber
\end{align}
Besides of the variables introduced in the introduction, $v$ denotes the 
barycentric velocity of the mixture. The quantities $\rho_ib_i$
are the body forces, where $\rho b=\sum_{i=1}^n\rho_ib_i$ is the total force exerted on 
the mixture, and $\rho r$ is the total heat supply due to radiation. 
The diffusional velocities $u_i$, the partial internal energy densities
$\rho_ie_i$, and the partial pressures $p_i$ are determined from the free energy;
see below.

Equations \eqref{2.mass}--\eqref{2.energy} correspond to a so-called class-I model. 
They can be derived either via an entropy invariant model reduction \cite{BoDr15} 
or in the high-friction limit \cite{GeTz23} from a class-II model, 
in which each component has its own velocity $v_i$.
Equations \eqref{2.mass} are the partial mass balances, \eqref{2.mom} is the
momentum balance, and \eqref{2.energy} the energy balance.
As proved in \cite{GeTz23}, system \eqref{2.mass}--\eqref{2.energy} and \eqref{1.velo}
fits into the general theory of hyperbolic--parabolic composite-type systems introduced
in \cite{KaSh88} and further explored in \cite{Ser10}.

As mentioned in the introduction, system \eqref{1.mass}--\eqref{1.energy} 
and \eqref{1.pressure} is supplemented
by the constrained Maxwell--Stefan system \eqref{1.velo} for the velocities $u_i$.
These equations can be derived from a class-II model
in the diffusion approximation \cite[Section 14, (210)]{BoDr15}
or in the high-friction limit \cite[Section 2, (2.50)]{GeTz23} with the driving forces
$$
  d_i = -\frac{\rho_i}{\rho}\na p + \rho_i\theta\na\frac{\mu_i}{\theta}
	- \theta(\rho_ie_i+p_i)\na\frac{1}{\theta} + \rho_i(b-b_i),
$$
where $\mu_i$ is the chemical potential of the $i$th component. Since the pressure is uniform in space, $\na p=0$, 
and we have neglected external forces, the driving force becomes \eqref{1.driving}.
Then equations \eqref{1.mass}--\eqref{1.energy} and \eqref{1.pressure} are obtained
by setting $v=0$ and $r=b_i=0$.

The internal energy densities $\rho_ie_i$, partial pressures $p_i$, and the chemical
potential $\mu_i$ are determined from the Helmholtz free energy. We assume that the
gas is a simple mixture, which implies that these quantities can be calculated from
the partial free energy densities $\psi_i(\rho_i,\theta)$, $i=1,\ldots,n$. We have
$$
  \mu_i = \frac{\pa\psi_i}{\pa\rho_i}, \quad \rho_i\eta_i = -\frac{\pa\psi_i}{\pa\theta}, \quad
	\rho_ie_i = \psi_i + \theta\rho_i\eta_i, \quad p_i = \rho_i\mu_i-\psi_i,
$$ 
where $\rho_i\eta_i$ is the entropy density of the $i$th component and the
equation for $p_i$ is called the Gibbs--Duhem relation.
Defining the partial Helmholtz free energy as 
\begin{equation}\label{2.helm}
  \psi_i = \theta\frac{\rho_i}{m_i}\bigg(\log\frac{\rho_i}{m_i}-1\bigg) 
	- c_w\rho\theta(\log\theta-1), \quad i=1,\ldots,n,
\end{equation}
the thermodynamic quantities are given by \eqref{1.constit}.
Moreover, the driving force $d_i$ and enthalpy $h_i:=\rho_ie_i+p_i$ read as
\begin{equation}\label{2.di}
  d_i = \frac{\na(\rho_i\theta)}{m_i}, \quad h_i = \bigg(c_w+\frac{1}{m_i}\bigg)\rho_i\theta,
	\quad i=1,\ldots,n.
\end{equation}
This corresponds to equations \eqref{1.Je}.


\section{Preparations}\label{sec.prep}
	
\subsection{Inversion of the Maxwell--Stefan system}\label{sec.inv}
	
We discuss the inversion of the Max\-well--Stefan system \eqref{1.velo}
following \cite{GeTz23} and \cite[Section 2]{HJT22}. We write \eqref{1.velo} equivalently as
\begin{equation}\label{3.linsys}
  -\theta\sqrt{\rho_i}\sum_{j=1}^n M_{ij}\sqrt{\rho_j} u_j = d_i, \quad i=1,\ldots,n,
\end{equation}
where the matrix $M(\bm\rho)=(M_{ij})\in\R^{n\times n}$ is given by
\begin{equation}\label{3.M}
  M_{ij} = \begin{cases}
	\sum_{k=1,\,k\neq i}^n b_{ik}\rho_k &\mbox{if }i=j, \\
	-b_{ij}\sqrt{\rho_i\rho_j} &\mbox{if }i\neq j.
	\end{cases}
\end{equation}
We wish to invert $M\bm{v}=\bm{w}$, where $v_i=\sqrt{\rho_i}u_i$ and
$w_i=-d_i/(\theta\sqrt{\rho_i})$. Since $(b_{ij})$ is symmetric,
$0 = (M\bm{v})_i = \sum_{i\neq j}b_{ij}\sqrt{\rho_j}(\sqrt{\rho_j}v_i-\sqrt{\rho_i}v_j)$
shows that the kernel of $M$ consists of $\operatorname{span}\{\sqrt{\bm\rho}\}$.
Thus, we can invert $M$ only on the subspace $L=\{\bm{y}\in\R^n:\sqrt{\bm\rho}\cdot\bm{y}=0\}$.
We define the projections $P_L$ on $L$ and $P_{L^\perp}$ on $L^\perp$ by
$$
  (P_L)_{ij} = \delta_{ij} - \rho^{-1}\sqrt{\rho_i\rho_j}, \quad
	(P_{L^\perp})_{ij} = \rho^{-1}\sqrt{\rho_i\rho_j}\quad\mbox{for }i,j=1,\ldots,n,
$$
where $\delta_{ij}$ is the Kronecker symbol. The matrix $M=(M_{ij})$ is positive
definite on $L$ \cite[Lemma 4]{HJT22}:
\begin{equation}\label{3.posdefM}
  \bm{z}^T M\bm{z} \ge \mu_M|P_L\bm{z}|^2\quad\mbox{for all }\bm{z}\in\R^n,
\end{equation}
where $\mu_M=\min_{i\neq j}b_{ij}>0$. Since the matrix $MP_L+P_{L^\perp}$ is
invertible \cite[Lemma 4]{HJT22}, we can define the Bott--Duffin inverse of $M$ with respect
to $L$ as $M^{BD} = P_L(MP_L+P_{L^\perp})^{-1}$. Hence, we can invert \eqref{3.linsys} by
\begin{equation}\label{3.ui}
  \sqrt{\rho_i}u_i = -\sum_{j=1}^n M_{ij}^{BD}\frac{d_j}{\theta\sqrt{\rho_j}}, \quad
	i=1,\ldots,n.
\end{equation}
The matrix $M^{BD}=M^{BD}(\bm\rho)$ is symmetric and positive definite on $L$ 
\cite[Lemma 4]{HJT22},
\begin{equation}\label{3.semidef}
  \bm{z}^T M^{BD}\bm{z} \ge \mu|P_L \bm{z}|^2 \quad\mbox{for all }\bm{z}\in\R^n,
\end{equation}
where $\mu=(2\sum_{i\neq j}(b_{ij}+1))^{-1}$.


\subsection{Entropy variables}

The mathematical analysis becomes easier when formulating the
system in terms of the so-called entropy variables. To this end, we introduce
the mathematical entropy density
\begin{equation}\label{3.h}
  h = -\sum_{i=1}^n\rho_i\eta_i 
	= \sum_{i=1}^n\frac{\rho_i}{m_i}\bigg(\log\frac{\rho_i}{m_i}-1\bigg)
	- c_w\rho\log\theta,
\end{equation}
which is the negative of the physical (total) entropy density \eqref{1.constit}. 
Summing the mass balances
\eqref{1.mass} over $i=1,\ldots,n$ and using the constraint $\sum_{i=1}^n\rho_iu_i=0$
from \eqref{1.velo}, we obtain $\pa_t\rho=0$. Thus, the total density is determined by the
initial total density, $\rho(x,t)=\sum_{i=1}^n\rho_i^0(x)$ for $x\in\Omega$, 
and is independent of time.
This suggests to compute only the first $n-1$ mass densities, since the last one
can be determined by $\rho_n=\rho-\sum_{i=1}^{n-1}\rho_i$. Then we interpret the
entropy density $h$ as a function of $(\bm\rho',\theta):=(\rho_1,\ldots,\rho_{n-1},\theta)$:
$$
  h(\bm\rho',\theta) = \sum_{i=1}^{n-1}\frac{\rho_i}{m_i}\bigg(\log\frac{\rho_i}{m_i}-1\bigg)
	+ \frac{\rho_n}{m_n}\bigg(\log\frac{\rho_n}{m_n}-1\bigg) - c_w\rho\log\theta
$$
with the partial derivatives
$$
  \frac{\pa h}{\pa\rho_i} = \frac{1}{m_i}\log\frac{\rho_i}{m_i}
	- \frac{1}{m_n}\log\frac{\rho_n}{m_n}, \quad i=1,\ldots,n-1,\quad
	\frac{\pa h}{\pa\theta} = -c_w\frac{\rho}{\theta}.
$$
The Hessian matrix
$$
  \mathrm{D}^2 h = \begin{pmatrix} R & \bm{0} \\ \bm{0}^T & c_w\rho/\theta^2 \end{pmatrix}
	\in\R^{n\times n}, \quad\mbox{where }
	R_{ij} = \frac{\delta_{ij}}{m_i\rho_i} + \frac{1}{m_n\rho_n}, 
$$
is positive definite, showing that the entropy is convex.

According to thermodynamics \cite{BoDr15}, the entropy variables equal
$(\mu_1/\theta,\ldots,\mu_n/\theta,-1/\theta)$. We set
\begin{equation}\label{3.qi}
  q_i = \frac{\mu_i}{\theta} = \frac{1}{m_i}\log\frac{\rho_i}{m_i} - c_w(\log\theta-1)
	\quad\mbox{for }i=1,\ldots,n.
\end{equation}
Since the $n$th partial density is determined by the densities $\rho_1,\ldots,\rho_{n-1}$,
we prefer to work with the relative entropy variables
\begin{equation}\label{3.wi}
  w_i = q_i-q_n = \frac{\mu_i-\mu_n}{\theta} = \frac{\pa h}{\pa\rho_i}, \quad i=1,\ldots,n-1.
\end{equation}
Setting additionally $w=\log\theta$, our new set of variables is $(w_1,\ldots,w_{n-1},w)$.
The following lemma states that the mapping $(\rho_1,\ldots,\rho_n,\theta)\mapsto
(w_1,\ldots,w_{n-1},w)$ is invertible.

\begin{lemma}\label{lem.inv}
Let $(w_1,\ldots,w_{n-1},w)\in\R^n$ and $\rho>0$ be given. Then there there exists a unique 
$(\rho_1,\ldots,\rho_{n},\theta)\in\R_+^{n+1}$ with $\rho_i>0$ for $i=1,\ldots,n$
satisfying $\sum_{i=1}^n\rho_i=\rho$, $w_i=\pa h/\pa\rho_i$ for $i=1,\ldots,n-1$, 
and $w=\log\theta$.
\end{lemma}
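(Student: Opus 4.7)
The statement splits cleanly into a trivial temperature part and a nontrivial density part. The temperature is immediate: from $w = \log\theta$ the only candidate is $\theta = e^w > 0$, so uniqueness and existence for $\theta$ are automatic. The whole content of the lemma lies in inverting the relation between $(\rho_1,\ldots,\rho_n)$ (constrained by $\sum_i \rho_i = \rho$) and $(w_1,\ldots,w_{n-1})$.

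My plan is to reduce the $n$-dimensional problem to a scalar monotone equation. Starting from $w_i = m_i^{-1}\log(\rho_i/m_i) - m_n^{-1}\log(\rho_n/m_n)$, I would solve algebraically for $\rho_i$ in terms of $\rho_n$:
\begin{equation*}
\rho_i = m_i\, e^{m_i w_i}\left(\frac{\rho_n}{m_n}\right)^{m_i/m_n}, \quad i=1,\ldots,n-1.
\end{equation*}
Plugging this into the constraint $\sum_{i=1}^n \rho_i = \rho$ yields a single equation $F(\rho_n) = \rho$ with
\begin{equation*}
F(t) := \sum_{i=1}^{n-1} m_i\, e^{m_i w_i}\left(\frac{t}{m_n}\right)^{m_i/m_n} + t, \quad t > 0.
\end{equation*}
I would then observe that $F$ is continuous on $(0,\infty)$, $F(t)\to 0$ as $t\to 0^+$, $F(t)\to \infty$ as $t\to\infty$, and $F'(t) > 0$ (all exponents $m_i/m_n$ and all prefactors are positive, and the trailing $+t$ already gives strict monotonicity). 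Hence $F:(0,\infty)\to(0,\infty)$ is a bijection, giving a unique $\rho_n > 0$ and, through the explicit formula above, unique positive $\rho_1,\ldots,\rho_{n-1}$.

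There is really no hard step: the problem is designed so that, after fixing $\theta$ via $w$, the $(n-1)$ log-relations can be solved in a triangular way. An alternative would be the convex-analytic route, invoking strict convexity of $h(\cdot,\theta)$ on the open simplex $\{\bm\rho' : \rho_i>0,\ \sum_{i<n}\rho_i<\rho\}$ (guaranteed by the positive definite Hessian displayed earlier) together with essential smoothness --- since $\partial h/\partial \rho_i \to \pm\infty$ as $\rho_i$ or $\rho_n$ approach the boundary --- so that $\nabla_{\bm\rho'} h$ is a $C^1$-diffeomorphism onto $\R^{n-1}$ by standard Legendre duality. I prefer the elementary monotonicity argument because it is self-contained and yields the inverse constructively in two lines, which will also be convenient later when properties of the inverse map (e.g.\ smoothness, positivity bounds) are needed.
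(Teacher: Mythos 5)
Your proof is correct and takes essentially the same approach as the paper: both reduce the problem to a scalar monotonicity/intermediate-value argument after solving the log-relations explicitly, the only cosmetic difference being that you parametrize by $t=\rho_n$ and solve $F(t)=\rho$ with $F$ strictly increasing on $(0,\infty)$, while the paper parametrizes by $s=\rho-\rho_n$ and finds the unique fixed point of the strictly decreasing $f(s)=F(\rho-s)-(\rho-s)$ on $[0,\rho]$. These are equivalent under the substitution $t=\rho-s$.
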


\begin{proof}
The proof is similar to \cite[Lemma 6]{ChJu15} with some small changes.
Given $w\in\R$, the temperature equals $\theta=\exp(w)>0$. The function
$$
  f(s) = \sum_{i=1}^{n-1}m_i e^{m_iw_i}\bigg(\frac{\rho-s}{m_n}\bigg)^{m_i/m_n}
	\quad\mbox{for }s\in[0,\rho],
$$
is strictly decreasing and $0=f(\rho)<f(s)<f(0)$ for $s\in(0,\rho)$. 
By continuity, there exists a unique fixed point
$s_0\in(0,\rho)$. Then $\rho_i:=m_i\exp(m_iw_i)((\rho-s_0)/m_n)^{m_i/m_n}$ for
$i=1,\ldots,n$ satisfies $\rho_i>0$ and $\sum_{i=1}^{n-1}\rho_i=f(s_0)=s_0<\rho$.
Consequently, $\rho_n:=\rho-\sum_{i=1}^{n-1}\rho_i=\rho-s_0>0$ and 
$\rho_i/m_i=\exp(m_iw_i)(\rho_n/m_n)^{m_i/m_n}$ is equivalent to
$$
  w_i = \frac{1}{m_i}\log\frac{\rho_i}{m_i}	- \frac{1}{m_n}\log\frac{\rho_n}{m_n}
	= \frac{\pa h}{\pa\rho_i}
$$
for $i=1,\ldots,n-1$, which finishes the proof.
\end{proof}


\subsection{Formulation of the fluxes and parabolicity}\label{sec.flux}

We can compute the fluxes as a linear combination of $\na(w_1,\ldots,w_{n-1},w)$ or
$\na(q_1,\ldots,q_n,-1/\theta)$.

\begin{lemma}
It holds for $i=1,\ldots,n$ that
\begin{align}
  J_i &= -\sum_{j=1}^{n-1}A_{ij}\na w_j - \frac{B_i}{\theta}\na w 
	= -\sum_{j=1}^n A_{ij}\na q_j - B_i\na\bigg({-\frac{1}{\theta}}\bigg), \label{3.Ji} \\
	J_e &= -\kappa\theta\na w - \sum_{j=1}^{n-1}B_j\na w_j
	- \theta\sum_{i,j=1}^n\frac{A_{ij}}{m_im_j}\na w \label{3.Je} \\
	&= -\sum_{j=1}^n B_j\na q_j - \theta^2\bigg(\kappa + \sum_{i,j=1}^n
	\frac{A_{ij}}{m_im_j}\bigg)\na\bigg({-\frac{1}{\theta}}\bigg), \nonumber
\end{align}
where the coefficients
\begin{equation}\label{3.defAB}
  A_{ij} = M_{ij}^{BD}\sqrt{\rho_i\rho_j}, \quad
	B_i = \theta\sum_{j=1}^n A_{ij}\bigg(c_w+\frac{1}{m_j}\bigg) 
	= \theta\sum_{j=1}^n \frac{A_{ij}}{m_j}
\end{equation}
for $i,j=1,\ldots,n$ depend on $(\bm\rho,\theta)$ and satisfy the relations
\begin{equation}\label{3.AB}
  \sum_{i=1}^n A_{ij} = \sum_{j=1}^n A_{ij} = \sum_{i=1}^n B_i = 0.
\end{equation}
\end{lemma}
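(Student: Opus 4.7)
The plan is to derive everything from the Bott--Duffin inversion formula \eqref{3.ui} together with the constitutive relations from Section \ref{sec.model}, and to treat the zero-sum identities \eqref{3.AB} first so they can be freely used when rewriting the fluxes.

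I would begin with \eqref{3.AB}. The decisive observation is that $\sqrt{\bm\rho}\in L^\perp$, so $P_L\sqrt{\bm\rho}=0$ and $P_{L^\perp}\sqrt{\bm\rho}=\sqrt{\bm\rho}$; hence $(MP_L+P_{L^\perp})\sqrt{\bm\rho}=\sqrt{\bm\rho}$ and consequently $M^{BD}\sqrt{\bm\rho}=P_L\sqrt{\bm\rho}=0$. Written componentwise, $\sum_{j}M_{ij}^{BD}\sqrt{\rho_j}=0$, which yields $\sum_{j}A_{ij}=\sqrt{\rho_i}\sum_j M_{ij}^{BD}\sqrt{\rho_j}=0$. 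Since $M^{BD}$ is symmetric, so is $A$, giving $\sum_i A_{ij}=0$; summing the definition of $B_i$ in $i$ and using this, $\sum_i B_i=0$. The same row-sum identity turns $\theta\sum_j A_{ij}(c_w+1/m_j)$ into $\theta\sum_j A_{ij}/m_j$, verifying the alternative expression for $B_i$ in \eqref{3.defAB}.

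Next I would derive \eqref{3.Ji}. Multiplying \eqref{3.ui} by $\sqrt{\rho_i}$ gives
\[
  J_i=\rho_i u_i=-\sum_{j=1}^n\frac{M_{ij}^{BD}\sqrt{\rho_i\rho_j}}{\theta\rho_j}\,d_j
  =-\sum_{j=1}^n\frac{A_{ij}}{\theta\rho_j}\,d_j.
\]
From the constitutive relations \eqref{1.constit} and $h_j=\rho_je_j+p_j=(c_w+1/m_j)\rho_j\theta$, I rewrite the driving force \eqref{1.driving} as
\[
  d_j=\rho_j\theta\,\na q_j+\theta h_j\,\na\!\Big({-}\tfrac{1}{\theta}\Big).
\]
Substituting and using the definition of $B_i$ from \eqref{3.defAB} produces the second form of \eqref{3.Ji}. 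The first form follows at once: $\na q_j=\na w_j+\na q_n$ for $j=1,\dots,n-1$ so the $\na q_n$ piece multiplies $\sum_j A_{ij}=0$ and disappears, while $\na(-1/\theta)=(1/\theta)\na w$ converts $B_i\na(-1/\theta)$ into $(B_i/\theta)\na w$.

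For the energy flux I would start from \eqref{1.Je}, $J_e=-\kappa\na\theta+\theta\sum_{i=1}^n \rho_i u_i/m_i$. Substituting $\rho_i u_i$ from the computation above and again replacing $d_k$ by its $\na q_k$ and $\na(-1/\theta)$ decomposition gives
\[
  \theta\sum_{i=1}^n\frac{\rho_i u_i}{m_i}
  =-\sum_{k=1}^n\Big(\theta\sum_{i=1}^n\frac{A_{ik}}{m_i}\Big)\na q_k
  -\theta^2\sum_{i,k=1}^n\frac{A_{ik}}{m_im_k}\,\na\!\Big({-}\tfrac{1}{\theta}\Big),
\]
and the first bracket is just $B_k$. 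Combining with $-\kappa\na\theta=-\kappa\theta^2\na(-1/\theta)$ yields the second form of \eqref{3.Je}; passing to $(w_1,\dots,w_{n-1},w)$ uses the identities $\sum_k B_k=0$ and $\na(-1/\theta)=(1/\theta)\na w$ in exactly the same way as for $J_i$.

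The work is almost entirely algebraic bookkeeping; the only conceptually nontrivial step is the identity $M^{BD}\sqrt{\bm\rho}=0$, which is what makes the zero-sum relations \eqref{3.AB} hold and simultaneously makes the two equivalent representations of the fluxes consistent with each other.
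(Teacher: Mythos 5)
Your proposal is correct and follows essentially the same route as the paper: rewrite $d_j=\rho_j\theta\na q_j+\theta h_j\na(-1/\theta)$, insert it into the Bott--Duffin inversion formula \eqref{3.ui}, read off the $A$, $B$, $a$ coefficients, and then reduce to the $(w_1,\ldots,w_{n-1},w)$ variables via the zero-sum identities \eqref{3.AB}. The only (pleasant) difference is your derivation of $M^{BD}\sqrt{\bm\rho}=0$: you observe directly that $\sqrt{\bm\rho}$ is a fixed point of $MP_L+P_{L^\perp}$ and hence of its inverse, so $M^{BD}\sqrt{\bm\rho}=P_L\sqrt{\bm\rho}=0$; the paper instead invokes the identity $P_L(MP_L+P_{L^\perp})^{-1}P_{L^\perp}=0$ from \cite{Yon90}. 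Your argument is more self-contained and equally correct.
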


\begin{proof}
We wish to express the driving force $d_j=\na(\rho_j\theta)/m_j$ from \eqref{1.Je}
in terms of $\na q_j=\na\log\rho_j/m_j-c_w\na\log\theta$. A computation, using
$w=\log\theta$, yields
\begin{equation}\label{3.di}
  d_j = \rho_j\theta\na q_j + \rho_j\theta\bigg(c_w+\frac{1}{m_j}\bigg)\na w.
\end{equation}
Therefore, by \eqref{3.ui}, for $i=1,\ldots,n$,
\begin{align*}
  J_i &= \rho_iu_i = -\sqrt{\rho_i}\sum_{j=}^n M_{ij}^{BD}\frac{d_j}{\theta\sqrt{\rho_j}}
	= -\sum_{j=1}^n M_{ij}^{BD}\sqrt{\rho_i\rho_j}\bigg\{\na q_j + \bigg(c_w+\frac{1}{m_j}\bigg)
	\na w\bigg\} \\
	&= -\sum_{j=1}^n A_{ij}\na q_j - \sum_{j=1}^n A_{ij}\bigg(c_w+\frac{1}{m_j}\bigg)\na\log\theta
	= -\sum_{j=1}^n A_{ij}\na q_j - \frac{B_i}{\theta}\na\log\theta.
\end{align*}
This shows the second relation in \eqref{3.Ji}. The first relation then follows
from \eqref{3.AB} (which is proved below), since, using $q_j=w_j+q_n$ 
for $j=1,\ldots,n-1$ (see \eqref{3.wi}),
\begin{equation}\label{3.relA}
  \sum_{j=1}^n A_{ij}\na q_j = \sum_{j=1}^{n-1}A_{ij}(\na w_j + \na q_n) + A_{in}\na q_n	
	= \sum_{j=1}^{n-1}A_{ij}\na w_j.
\end{equation}

Next, we compute the energy flux defined in \eqref{1.energy}. We use \eqref{2.di},
\eqref{3.ui}, and \eqref{3.di}:
\begin{align*}
  J_e &= -\kappa\theta\na w + \sum_{i=1}^n\sqrt{\rho_i}\theta\bigg(c_w+\frac{1}{m_i}\bigg)
	\sqrt{\rho_i}u_i \\
	&= -\kappa\theta\na w - \theta\sum_{i,j=1}^n\sqrt{\rho_i}\bigg(c_w+\frac{1}{m_i}\bigg)
	M_{ij}^{BD}\frac{d_j}{\theta\sqrt{\rho_j}} \\
	&= -\kappa\theta\na w - \theta\sum_{i,j=1}^n\bigg(c_w+\frac{1}{m_i}\bigg)M_{ij}^{BD}
	\sqrt{\rho_i\rho_j}\bigg\{\na q_j + \bigg(c_w+\frac{1}{m_j}\bigg)\na w\bigg\} \\
	&= -\kappa\theta\na w - \sum_{j=1}^n B_j\na q_j - \theta\sum_{i,j=1}^n A_{ij}
	\bigg(c_w+\frac{1}{m_i}\bigg)\bigg(c_w+\frac{1}{m_j}\bigg)\na w \\
	&= -\kappa\theta\na w - \sum_{j=1}^n B_j\na q_j - \theta\sum_{i,j=1}^n 
	\frac{A_{ij}}{m_im_j}\na w,
\end{align*}
where the last equation follows from \eqref{3.AB}. Moreover, because of
\begin{equation}\label{3.relB}
  \sum_{j=1}^n B_j\na q_j = \sum_{j=1}^{n-1}B_j\na(w_j+q_n) + B_n\na q_n
	= \sum_{j=1}^{n-1}B_j\na w_j,
\end{equation}
we have proved \eqref{3.Je}. 

It remains to verify \eqref{3.AB}. We recall the property
$P_L(MP_L+P_{L^\perp})^{-1}P_{L^\perp}=0$ from \cite[Lemma 2]{Yon90}, which implies
that $M^{BD}P_{L^\perp}=0$. Hence, $L^\perp\subset\operatorname{ker}M^{BD}$ and
since $L^\perp=\operatorname{span}\{\sqrt{\bm\rho}\}$, we conclude that
$\sum_{j=1}^n M_{ij}^{BD}\sqrt{\rho_j}=0$. This shows that, by the definition of $A_{ij}$,
$$
  \sum_{j=1}^n A_{ij} = \sqrt{\rho_i}\sum_{j=1}^n M_{ij}^{BD}\sqrt{\rho_j} = 0.
$$
The symmetry of $(A_{ij})$ immediately gives $\sum_{i=1}^n A_{ij}=0$. Finally, by the
definition of $B_i$,
\begin{align*}
  \sum_{i=1}^n B_i = \theta\sum_{i,j=1}^n M_{ij}^{BD}\sqrt{\rho_i\rho_j}
	\bigg(c_w+\frac{1}{m_j}\bigg)
	= \theta\sum_{j=1}^n\bigg(c_w+\frac{1}{m_j}\bigg)\sum_{i=1}^n A_{ij}=0.
\end{align*}
This finishes the proof.
\end{proof}

The previous proof shows that we can formulate the diffusion fluxes in different ways.

\begin{corollary}\label{coro.flux}
It holds for $i=1,\ldots,n$ that
$$
  J_i = \rho_iu_i = -\sum_{j=1}^n A_{ij}\na\bigg(q_j+\frac{w}{m_j}\bigg)
	= -\sqrt{\rho_i}\sum_{j=1}^n M_{ij}^{BD}\frac{d_j}{\theta\sqrt{\rho_j}}.
$$
\end{corollary}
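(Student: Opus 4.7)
The plan is to recognize that the Corollary is essentially a cosmetic rearrangement of identities already established in the proof of the preceding Lemma, combined with the kernel property of $M^{BD}$.

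First, the second equality $J_i = -\sqrt{\rho_i}\sum_{j=1}^n M_{ij}^{BD} d_j/(\theta\sqrt{\rho_j})$ is simply the Bott--Duffin inversion formula \eqref{3.ui} applied to the definition $J_i=\rho_i u_i$; no further argument is needed.

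For the first equality, I would start from an intermediate line in the proof of the previous lemma. Substituting \eqref{3.di} into \eqref{3.ui} yielded
\begin{equation*}
  J_i = -\sum_{j=1}^n M_{ij}^{BD}\sqrt{\rho_i\rho_j}\,\nabla q_j
  - \sum_{j=1}^n M_{ij}^{BD}\sqrt{\rho_i\rho_j}\Bigl(c_w+\frac{1}{m_j}\Bigr)\nabla w,
\end{equation*}
i.e., by the definition $A_{ij} = M_{ij}^{BD}\sqrt{\rho_i\rho_j}$,
\begin{equation*}
  J_i = -\sum_{j=1}^n A_{ij}\nabla q_j - c_w\nabla w\sum_{j=1}^n A_{ij}
  - \sum_{j=1}^n \frac{A_{ij}}{m_j}\nabla w.
\end{equation*}
The middle term vanishes because of the identity $\sum_{j=1}^n A_{ij}=0$ from \eqref{3.AB}, which is precisely the consequence of $\sqrt{\bm\rho}\in\ker M^{BD}$. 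The remaining two terms combine to $-\sum_{j=1}^n A_{ij}\nabla(q_j + w/m_j)$, which is the desired expression.

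There is no real obstacle here; the one small point worth flagging is that the $c_w$-contribution must cancel (otherwise the bracket would not simplify to $q_j+w/m_j$), and this cancellation is exactly what \eqref{3.AB} provides. The proof is therefore a two-line computation once the identities already proved in the lemma are in hand.
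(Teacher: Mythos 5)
Your proof is correct and follows the same route as the paper: the paper gives no separate argument and merely notes that the identity is read off from the intermediate lines of the preceding lemma's proof, exactly as you do. The one step worth spelling out is the cancellation of the $c_w\sum_j A_{ij}\nabla w$ term via $\sum_j A_{ij}=0$ from \eqref{3.AB}, which you correctly identify (the paper encodes the same cancellation in the two equivalent formulas for $B_i$ in \eqref{3.defAB}).
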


We claim that the Onsager matrix $Q\in\R^{(n+1)\times(n+1)}$ in \eqref{1.onsager}
is positive semidefinite. Let $a = \theta(\kappa + \sum_{i,j=1}^n A_{ij}/(m_im_j))$.
We compute for $\xi\in\R^{n+1}$:
\begin{align}\label{3.semidefQ}
  \xi^T Q\xi &= \sum_{i,j=1}^n A_{ij}\xi_i\xi_j
	+ 2\sum_{i=1}^n B_i\xi_i\xi_{n+1} + a\xi_{n+1}^n \\
	&= \sum_{i,j=1}^n A_{ij}\xi_i\xi_j 
	+ 2\theta\sum_{i,j=1}^n \frac{A_{ij}}{m_j}\xi_i\xi_{n+1}
	+ \theta^2\bigg(\kappa+\sum_{i,j=1}^n \frac{A_{ij}}{m_im_j}\bigg)\xi_{n+1}^2 \nonumber \\
	&= \sum_{i,j=1}^n A_{ij}\bigg(\xi_i + \frac{\theta\xi_{n+1}}{m_i}\bigg)
	\bigg(\xi_j + \frac{\theta\xi_{n+1}}{m_j}\bigg) + \kappa\theta^2\xi_{n+1}^2 \ge 0,
	\nonumber
\end{align}
where the nonnegativity follows from the positive semidefiniteness \eqref{3.semidef}
of $M^{BD}$. This reveals the parabolicity of our system in terms
of the entropy variables.


\subsection{Weak formulation}

The previous subsection shows that we can write our system as the mass and energy balances
\eqref{1.mass}--\eqref{1.energy} with the fluxes \eqref{3.Ji}--\eqref{3.Je}.
The weak formulation in the relative entropy variables \eqref{3.wi} reads as
\begin{align}
  & \int_0^T\langle\pa_t\rho_i,\phi_i\rangle\dt
	+ \int_0^T\int_\Omega\bigg(\sum_{j=1}^{n-1}A_{ij}\na w_j + e^{-w}B_i\na w\bigg)
	\cdot\na\phi_i \dx\dt = 0, \label{3.mass} \\
	& \int_0^T\langle\pa_t E,\phi_0\rangle\dt + \int_0^T\int_\Omega e^w\bigg(
	\kappa + \sum_{i,j=1}^n\frac{A_{ij}}{m_im_j}\bigg)\na w\cdot\na\phi_0\dx\dt 
	\label{3.energy} \\
	&\phantom{xx}{}+ \int_0^T\int_\Omega\sum_{j=1}^{n-1}B_j\na w_j\cdot\na\phi_0 \dx\dt
	= \lambda\int_0^T\int_{\pa\Omega}(\theta_0-\theta)\phi_0 \ds\dt \nonumber 
\end{align}
for test functions $\phi_1,\ldots,\phi_n\in L^2(0,T;H^1(\Omega))$ and
$\phi_0\in L^\infty(0,T;W^{1,\infty}(\Omega))$.
According to \eqref{1.constit}, the energy is given by 
$E=c_w\rho\theta$. Moreover, $\rho_i$, $A_{ij}$, $B_i$, and $E$ are interpreted
as functions of $(w_1,\ldots,w_{n-1},w)$.


\section{Proof of Theorem \ref{thm.ex}}\label{sec.ex}

The proof follows the lines of \cite[Section 3]{HeJu21}, which is based on the
boundedness-by-entropy method \cite{Jue16}, but some details are different.
We approximate equations \eqref{3.mass}--\eqref{3.energy} by replacing the time derivative
by the implicit Euler scheme and adding a higher-order regularization in $w_i$. The
existence of solutions to the approximate system is shown by means of the Leray--Schauder
fixed-point theorem, where the compactness of the fixed-point operator is obtained
by the approximate entropy inequality. This inequality yields estimates uniform in the
regularization parameters, allowing for the de-regularization limit via the
Aubin--Lions compactness lemma. 

Let $\eps\in(0,1)$, $N\in\N$, and $\tau=T/N$. We set $w_0=\log\theta_0$ and
$\bm{w}=(w_1,\ldots,w_{n-1},w)$. Let $\bar{\bm{w}}=(\bar{w}_1,\ldots,\bar{w}_{n-1},\bar{w})
\in L^\infty(\Omega;\R^{n})$ be given. We define for test functions $\phi_i\in H^2(\Omega)$, 
$i=0,\ldots,n-1$, the approximate scheme
\begin{align}\label{3.approx1}
  0 &= \frac{1}{\tau}\int_\Omega\big(\rho_i(\bm{w})-\rho_i(\bar{\bm{w}})\big)\phi_i\dx
	+ \int_\Omega\bigg(\sum_{j=1}^{n-1}A_{ij}\na w_j + e^{-w}B_i\na w\bigg)
	\cdot\na\phi_i\dx \\
	&\phantom{xx}{}+ \eps\int_\Omega\big(\mathrm{D}^2w_i:\mathrm{D}^2\phi_i + w_i\phi_i\big)\dx, 
	\nonumber \\
	0 &= \frac{1}{\tau}\int_\Omega\big(E(\bm{w})-E(\bar{\bm{w}})\big)\phi_0\dx
	+ \int_\Omega e^w\bigg(\kappa(e^w) + \sum_{i,j=1}^n\frac{A_{ij}}{m_im_j}
	\bigg)\na w\cdot\na\phi_0\dx \label{3.approx2} \\
	&\phantom{xx}{}+ \int_\Omega\sum_{i=1}^{n-1}B_i\na w_i\cdot\na\phi_0\dx
	- \lambda\int_{\pa\Omega}(e^{w_0}-e^w)\phi_0\ds \nonumber \\
	&\phantom{xx}{}+ \eps\int_\Omega(e^{w_0}+e^w)(w-w_0)\phi_0\dx
	+ \eps\int_\Omega e^w\big(\mathrm{D}^2 w:\mathrm{D}^2\phi_0 + |\na w|^2\na w\cdot\na\phi_0
	\big)\dx, \nonumber 
\end{align}
where $\mathrm{D}^2 w_i$ is the Hesse matrix of $w_i$, the double point ``:'' denotes
the Frobenius matrix product, we recall that $E(\bm{w})=c_w\rho\theta$, and
$A_{ij}$ and $B_i$ are interpreted as functions of $\bm{w}$.
The higher-order regularization yields solutions $w_i,w\in H^2(\Omega)$, and
the $W^{1,4}(\Omega)$ regularization allows us to estimate the higher-order terms
when using the test function $e^{-w_0}-e^{-w}$ (see the estimate of $I_{11}$ below).
The lower-order regularization $(e^{w_0}-e^w)(w-w_0)$ provides an $\eps$-dependent
$L^2(\Omega)$ bound for $w$. 

\subsection{Solution of the linearized approximate problem}

Let $\bm{w}^*\in W^{1,4}(\Omega;$ $\R^n)$ and $\sigma\in[0,1]$. We want to find
a solution $\bm{w}\in H^2(\Omega;\R^n)$ to the linear problem
\begin{equation}\label{3.LM}
  a(\bm{w},\bm\phi) = \sigma F(\bm\phi)\quad\mbox{for }
	\bm\phi=(\phi_1,\ldots,\phi_{n-1},\phi_0)\in H^2(\Omega;\R^n),
\end{equation}
where
\begin{align*}
  a(\bm{w},\bm\phi) &= \int_\Omega\kappa(e^{w^*})e^{w^*}\na w\cdot\na\phi_0\dx
	+ \eps\int_\Omega\sum_{i=1}^{n-1}\big(\mathrm{D}^2 w_i:\mathrm{D}^2\phi_i 
	+ w_i\phi_i\big)\dx \\
	&\phantom{xx}{}+ \eps\int_\Omega(e^{w_0}+e^{w^*})w\phi_0 \dx
	+ \eps\int_\Omega e^{w^*}\big(\mathrm{D}^2 w:\mathrm{D}^2\phi_0
	+ |\na w^*|^2\na w\cdot\na\phi_0\big)\dx, \\
	F(\bm\phi) &= -\int_\Omega\sum_{i,j=1}^{n-1}A_{ij}(\bm{w}^*)\na w_j^*\cdot\na\phi_i\dx
	- \int_\Omega e^{w^*}\sum_{i,j=1}^n\frac{A_{ij}(\bm{w}^*)}{m_im_j}\na w^*\cdot\na\phi_0\dx \\
	&\phantom{xx}{}- \int_\Omega\sum_{i=1}^{n-1}B_i(\bm{w}^*)e^{-w^*}\na w^*
	\cdot\na\phi_i\dx - \int_\Omega\sum_{i=1}^{n-1}B_i(\bm{w}^*)\na w_i^*\cdot\na\phi_0\dx \\
	&\phantom{xx}{}- \frac{1}{\tau}\int_\Omega\sum_{i=1}^{n-1}(\rho_i^*-\bar{\rho}_i)\phi_i\dx
	- \frac{1}{\tau}\int_\Omega(E^*-\bar{E})\phi_0\dx
	+ \lambda\int_{\pa\Omega}(e^{w_0}-e^{w^*})\phi_0 \ds \\
	&\phantom{xx}{}+ \eps\int_\Omega(e^{w_0}+e^{w^*})w_0\phi_0\dx,
\end{align*}
where we abbreviated $\rho_i^*=\rho_i(\bm{w}^*)$, $\bar{\rho}_i=\rho_i(\bar{\bm{w}})$,
$E^*=c_w\rho e^{w^*}$, and $\bar{E}=c_w\rho e^{\bar{w}}$.
The bilinear form $a$ is clearly coercive on $H^2(\Omega;\R^n)$, and both $a$ and $F$
are continuous on this space. By the Lax--Milgram lemma,
there exists a unique solution $\bm{w}\in H^2(\Omega;\R^n)$ to \eqref{3.LM}.


\subsection{Solution of the approximate problem}\label{sec.LM}

The solution $\bm{w}\in H^2(\Omega;\R^n)$ to \eqref{3.LM} 
defines the fixed-point operator $S:W^{1,4}(\Omega;\R^n)
\times[0,1]\to W^{1,4}(\Omega;\R^n)$, $S(\bm{w}^*,\sigma)=\bm{w}$. 
The operator is continuous, compact (because of the compact embedding 
$H^2(\Omega;\R^n)\hookrightarrow W^{1,4}(\Omega;\R^n)$), and it
satisfies $S(\bm{w}^*,0)=0$ for all $\bm{w}^*\in W^{1,4}(\Omega;\R^n)$. It remains to
find a uniform bound for all fixed points of $S(\cdot,\sigma)$. Let $\bm{w}
\in H^2(\Omega;\R^n)$ be such a fixed point. Then $\bm{w}$ solves \eqref{3.LM} with
$\bm{w}^*=\bm{w}$. We choose the test functions $\phi_i=w_i$ for $i=1,\ldots,n-1$ and
$\phi_0=e^{-w_0}-e^{-w}$ in \eqref{3.LM}:
\begin{align}\label{3.est}
  0 &= \frac{\sigma}{\tau}\int_\Omega\sum_{i=1}^{n-1}(\rho_i-\bar\rho_i)w_i\dx
	+ \frac{\sigma}{\tau}\int_\Omega(E-\bar{E})(-e^{-w})\dx
	+ \frac{\sigma}{\tau}\int_\Omega(E-\bar{E})e^{-w_0}\dx \\
	&\phantom{xx}{}+ \sigma\int_\Omega\sum_{i,j=1}^{n-1}A_{ij}(\bm{w})\na w_i\cdot\na w_j\dx
	+ 2\sigma\int_\Omega\sum_{i=1}^{n-1}B_i(\bm{w})e^{-w}\na w_i\cdot\na w\dx \nonumber \\
	&\phantom{xx}{}+ \int_\Omega\kappa(e^w)|\na w|^2\dx 
	+ \eps\int_\Omega\sum_{i=1}^{n-1}\big(|\mathrm{D}^2 w_i|^2 + w_i^2\big)\dx
	+ \sigma\int_\Omega\sum_{i,j=1}^n \frac{A_{ij}(\bm{w})}{m_im_j}|\na w|^2\dx \nonumber \\
	&\phantom{xx}{}- \sigma\lambda\int_{\pa\Omega}(e^{w_0}-e^w)(e^{-w_0}-e^{-w})\ds
	+ \eps\int_\Omega(e^{w_0}+e^w)(e^{-w_0}-e^{-w})(w-\sigma w_0)\dx \nonumber \\
	&\phantom{xx}{}+ \eps\int_\Omega\big(|\mathrm{D}^2 w|^2 - \mathrm{D}w:(\na w\otimes\na w)
	+ |\na w|^4\big)\dx =: I_1+\cdots+I_{11}. \nonumber 
\end{align}
We estimate the terms $I_1,\ldots,I_{11}$ step by step. First, by the convexity of
the entropy and arguing similarly as in \cite[Section 3, Step 2]{HeJu21},
\begin{align*}
  I_1 + I_2 &= \frac{\sigma}{\tau}\int_\Omega\sum_{i=1}^{n-1}\bigg(
	(\rho_i-\bar{\rho}_i)\frac{\pa h}{\pa\rho_i} + (\theta-\bar\theta)\frac{\pa h}{\pa\theta}
	\bigg)\dx \\
	&\ge \frac{\sigma}{\tau}\int_\Omega\big(h(\rho_1,\ldots,\rho_{n-1},\theta)
	- h(\bar\rho_1,\ldots,\bar\rho_{n-1},\bar\theta)\big)\dx,
\end{align*}
where we have set $\theta=e^w$ and $\bar\theta=e^{\bar{w}}$.
Definition \eqref{3.wi} of $w_i$, definition \eqref{3.defAB} of $B_i$, and the relations
$$
  \sum_{j=1}^{n-1}A_{ij}(\bm{w})\na w_j = \sum_{j=1}^n A_{ij}(\bm{w})\na q_j, \quad 
	\sum_{i=1}^{n-1}B_{i}(\bm{w})\na w_i = \sum_{j=1}^n B_{i}(\bm{w})\na q_i
$$
from \eqref{3.relA}--\eqref{3.relB} allow us to rewrite
the sum $I_4+I_5+I_8$ as
\begin{equation}\label{3.I458}
  I_4+I_5+I_8 = \sigma\int_\Omega\sum_{i,j=1}^n A_{ij}(\bm{w})
	\na\bigg(q_i + \frac{w}{m_i}\bigg)\cdot\na\bigg(q_j + \frac{w}{m_j}\bigg)\dx.
\end{equation}
This expression is nonnegative because of the positive semidefiniteness of 
$A_{ij}=M_{ij}^{BD}\sqrt{\rho_i\rho_j}$; see \eqref{3.semidef}. Furthermore,
since $\sinh(z)/z\ge 1$ for $z\in\R$, $z\neq 0$,
\begin{align*}
  I_9 &= \sigma\lambda\int_{\pa\Omega} e^{-w-w_0}(e^w-e^{w_0})^2\dx \ge 0, \\
  I_{10} &= 2\eps\int_\Omega\sinh(w-w_0)(w-\sigma w_0)\dx
	= 2\eps\int_\Omega(w-w_0)(w-\sigma w_0)\frac{\sinh(w-w_0)}{w-w_0}\dx \\
	&= \eps\int_\Omega w^2\frac{\sinh(w-w_0)}{w-w_0}\dx 
	+ \eps\int_\Omega\big(w^2 - 2(1+\sigma)ww_0 + 2\sigma w_0^2\big)
	\frac{\sinh(w-w_0)}{w-w_0}\dx \\
	&\ge \eps\int_\Omega w^2\dx + \eps\int_\Omega\big(w^2 - 2(1+\sigma)ww_0 + 2\sigma w_0^2\big)
	\frac{\sinh(w-w_0)}{w-w_0}\dx.
\end{align*}
We claim that there exists $m=m(w_0,\sigma)>0$ such that for all $w\in\R$,
$$
  g(w) = \big(w^2 - 2(1+\sigma)ww_0 + 2\sigma w_0^2\big)
	\frac{\sinh(w-w_0)}{w-w_0} \ge -m,
$$
where $w_0\in\R$ and $\sigma\in(0,1]$ are given. 
Indeed, this follows from $g(w)\to\infty$ as $|w|\to\infty$ and $g((1+\sigma)w_0)<0$
(unless $w_0=0$). We conclude that
$$
  I_{10} \ge \eps\int_\Omega w^2\dx - \eps m.
$$
Finally, we can estimate
$$
  I_{11} = \frac{\eps}{2}\int_\Omega\big(|\mathrm{D}^2 w|^2 
	+ |\mathrm{D}^2 w-\na w\otimes\na w|^2 + |\na w|^4\big)\dx
	\ge \frac{\eps}{2}\int_\Omega\big(|\mathrm{D}^2 w|^2 + |\na w|^4\big)\dx.
$$
Summarizing these estimates, we find that
\begin{align}\label{3.est2}
  \frac{\sigma}{\tau}&\int_\Omega\big(h(\rho_1,\ldots,\rho_{n-1},\theta)+Ee^{-w_0}\big)\dx
	+ \eps C\big(\|\bm{w}\|_{H^2(\Omega)}^2 + \|\na w\|_{L^{4}(\Omega)}^4\big) \\
	&{}+ \int_\Omega\kappa(e^w)|\na w|^2\dx
	\le \frac{\sigma}{\tau}\int_\Omega\big(h(\bar\rho_1,\ldots,\bar\rho_{n-1},\bar\theta)
	+\bar{E}e^{-w_0}\big)\dx + \eps m. \nonumber
\end{align}
The right-hand side is bounded since $\bar{\bm{w}}\in
L^\infty(\Omega;\R^n)$ by assumption, implying that $(\bar\rho_1,\ldots,\bar\rho_{n-1},
\bar\theta)\in L^\infty(\Omega;\R^n)$. The first term on the left-hand side is bounded from
below since, by definition \eqref{3.h} of $h$ and $E e^{-w_0}=c_w\rho\theta/\theta_0$,
$$
  h(\rho_1,\ldots,\rho_{n-1},\theta)+Ee^{-w_0}
	= \sum_{i=1}^{n}\frac{\rho_i}{m_i}\bigg(\log\frac{\rho_i}{m_i}-1\bigg)
	- c_w\rho\bigg(\log\theta-\frac{\theta}{\theta_0}\bigg). 
$$
Thus, we obtain a uniform bound for $\bm{w}$ in $H^2(\Omega;\R^n)$ and
consequently also in $W^{1,4}(\Omega;\R^n)$. We can apply the Leray--Schauder 
fixed-point theorem to conclude the existence of a fixed point of $S(\cdot,1)$.
This, in turn, shows that $\bm{w}$ is a weak solution to the approximate problem 
\eqref{3.approx1}--\eqref{3.approx2}. 

\begin{remark}[Treatment of the cross-terms]\label{rem.cross}\rm
In the paper \cite{HeJu21}, the fluxes are given by
$$
  \begin{pmatrix} \bm{J} \\ J_e \end{pmatrix}
	= -\begin{pmatrix} M & -\bm{G} \\ \bm{G}^T & \kappa\theta^2 \end{pmatrix}
	\na\begin{pmatrix} \bm\mu/\theta \\ -1/\theta \end{pmatrix},
$$
where $M=M(\bm\rho,\theta)\in\R^{n\times n}$ and $\bm{G}=\bm{G}(\bm\rho,\theta)\in\R^n$.
A multiplication of this equation by $\na(\bm\mu/\theta,-1/\theta)$ 
shows that the cross-terms cancel out,
$$
  -\na\begin{pmatrix} \bm\mu/\theta \\ -1/\theta \end{pmatrix}^T
  :\begin{pmatrix} \bm{J} \\ J_e \end{pmatrix}
	= \sum_{i,j=1}^n M_{ij}\na\frac{\mu_i}{\theta}\cdot\na\frac{\mu_j}{\theta}
	+ \kappa|\na\log\theta|^2 \ge 0,
$$
since $M$ is assumed to be positive semidefinite in \cite{HeJu21}. In the present work, we have
$$
  \begin{pmatrix} \bm{J} \\ J_e \end{pmatrix}
	= -\begin{pmatrix} A & \bm{B} \\ \bm{B}^T & a
	\end{pmatrix}\na\begin{pmatrix} \bm\mu/\theta \\ -1/\theta \end{pmatrix},
$$
and the cross-terms do not cancel. This is compensated by the sum 
$\sum_{i,j=1}^n A_{ij}/(m_im_j)$. Indeed, a computation shows that (also see \eqref{3.I458})
\begin{align*}
  -\na\begin{pmatrix} \bm\mu/\theta \\ -1/\theta \end{pmatrix}^T
  :\begin{pmatrix} \bm{J} \\ J_e \end{pmatrix}
	&= \sum_{i,j=1}^n A_{ij}\na\bigg(q_i + \frac{w}{m_i}\bigg)
	\cdot\na\bigg(q_j + \frac{w}{m_j}\bigg) 
	+ \kappa|\na\log\theta|^2 \ge 0,
\end{align*}
since $A$ is positive semidefinite because of \eqref{3.semidefQ}.
\qed\end{remark}


\subsection{Discrete entropy inequality}

We derive some estimates from \eqref{3.est} with $\sigma=1$, which are
uniform in $(\eps,\tau)$,
by exploiting the sum $I_4+I_5+I_8$, which we have neglected in \eqref{3.est2}.
Taking into account that the estimate of $I_{10}$ becomes for $\sigma=1$
$$
  I_{10} = 2\eps\int_\Omega\sinh(w-w_0)(w-w_0)\dx 
	\ge 2\eps\int_\Omega(w-w_0)^2\dx\ge 0,
$$
we obtain the discrete entropy inequality
\begin{align}\label{3.dei}
  \frac{\sigma}{\tau}&\int_\Omega\big(h(\rho_1,\ldots,\rho_{n-1},\theta)+Ee^{-w_0}\big)\dx
	+ \eps C\big(\|\bm{w}\|_{H^2(\Omega)}^2 + \|\na w\|_{L^{4}(\Omega)}^4\big) \\
	&\phantom{xx}{}{}+ \int_\Omega\kappa(e^w)|\na w|^2\dx
	+ \int_\Omega\sum_{i,j=1}^n A_{ij}\na\bigg(q_i+\frac{w}{m_i}\bigg)\cdot\na\bigg(
	q_j+\frac{w}{m_j}\bigg)\dx \nonumber \\
	&\le \frac{\sigma}{\tau}\int_\Omega\big(h(\bar\rho_1,\ldots,\bar\rho_{n-1},\bar\theta)
	+\bar{E}e^{-w_0}\big)\dx. \nonumber
\end{align}

\begin{lemma}\label{lem.posdef}
It holds that
\begin{equation}\label{3.posdef}
  \int_\Omega\sum_{i,j=1}^n A_{ij}\na\bigg(q_i+\frac{w}{m_i}\bigg)\cdot\na\bigg(
	q_j+\frac{w}{m_j}\bigg)\dx \ge \int_\Omega\sum_{i=1}^n\frac{\mu}{m_i^2}
	|2\na\sqrt{\rho_i}+\sqrt{\rho_i}\na w|^2\dx,
\end{equation}
where $\mu>0$ is defined in \eqref{3.semidef}.
\end{lemma}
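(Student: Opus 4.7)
The plan is to recognise the left-hand integrand as a quadratic form in $M^{BD}$, apply the positive-definiteness estimate \eqref{3.semidef}, and use the pressure constraint \eqref{1.pressure} to convert the resulting $|P_L\cdot|^2$ bound into a full Euclidean norm. To begin, I would set
$$y_i := \sqrt{\rho_i}\,\na\bigg(q_i+\frac{w}{m_i}\bigg), \quad i=1,\ldots,n,$$
viewed componentwise in the spatial index $k$. Since $A_{ij}=M_{ij}^{BD}\sqrt{\rho_i\rho_j}$, the integrand reads $\sum_{k=1}^3\sum_{i,j=1}^n M_{ij}^{BD}y_i^k y_j^k$. Writing $q_i+w/m_i=(1/m_i)\log(\rho_i\theta/m_i)-c_w(w-1)$ and differentiating yields the decomposition
$$y_i = \tilde y_i - c_w\sqrt{\rho_i}\,\na w,\qquad \tilde y_i := \frac{1}{m_i}\bigl(2\na\sqrt{\rho_i}+\sqrt{\rho_i}\na w\bigr).$$

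Next I would observe that the correction $(c_w\sqrt{\rho_i}\na w)_i$ is, at every point and in every spatial component, a scalar multiple of $\sqrt{\bm\rho}$, and therefore lies in $L^\perp$. Since $M^{BD}P_{L^\perp}=0$ (the identity recalled while proving \eqref{3.AB}), this piece is annihilated, and \eqref{3.semidef} applied direction by direction gives
$$\sum_{i,j=1}^n M_{ij}^{BD}y_i^k y_j^k = \sum_{i,j=1}^n M_{ij}^{BD}\tilde y_i^k\tilde y_j^k \ge \mu\,|P_L\tilde y^k|^2, \qquad k=1,2,3.$$

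The key remaining step is to show $\tilde y^k\in L$ pointwise, which would upgrade $|P_L\tilde y^k|^2$ to $|\tilde y^k|^2$. A short calculation using \eqref{1.constit} yields
$$\sqrt{\bm\rho}\cdot\tilde y = \sum_{i=1}^n\frac{\na\rho_i+\rho_i\na w}{m_i} = \frac{1}{\theta}\sum_{i=1}^n\frac{\na(\rho_i\theta)}{m_i} = \frac{\na p}{\theta},$$
which vanishes by \eqref{1.pressure}. Hence $P_L\tilde y^k=\tilde y^k$ and $|P_L\tilde y^k|^2=|\tilde y^k|^2=\sum_{i=1}^n m_i^{-2}|2\pa_k\sqrt{\rho_i}+\sqrt{\rho_i}\pa_k w|^2$. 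Summing over $k$ and integrating over $\Omega$ produces exactly the right-hand side of \eqref{3.posdef}.

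The principal difficulty is spotting both cancellations simultaneously: $M^{BD}$ annihilates the $c_w\sqrt{\rho_i}\na w$ term (permitting the reduction $y\mapsto\tilde y$), while the thermodynamic constraint \eqref{1.pressure} forces $\tilde y$ into $L$. Either observation used in isolation would only yield the weaker bound involving $|P_L\tilde y|^2$ and thereby fall short of the full Euclidean norm appearing on the right of \eqref{3.posdef}.
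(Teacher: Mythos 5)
Your proof is correct and follows essentially the same route as the paper. Both arguments use the row-sum property $\sum_j A_{ij}=0$ (equivalently $M^{BD}P_{L^\perp}=0$ plus symmetry of $M^{BD}$) to discard the $c_w\nabla w$ contribution, then invoke the positive-definiteness bound \eqref{3.semidef} to obtain $\mu|P_L\tilde y^k|^2$, and finally exploit the pressure constraint \eqref{1.pressure} via $\sqrt{\bm\rho}\cdot\tilde y^k=\theta^{-1}\partial_k p=0$ to conclude that $P_L$ acts as the identity on $\tilde y^k$.
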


We deduce from Assumption (A4) that $\kappa(e^w)|\na w|^2\ge c_\kappa |\na w|^2$,
and in view of \eqref{3.dei}, this quantity is bounded in $L^2(\Omega)$.
Therefore, Lemma \ref{lem.posdef} yields a gradient bound for $\sqrt{\rho_i}$
in $L^2(\Omega)$, since
$$
  4|\na\sqrt{\rho_i}|^2 \le |2\na\sqrt{\rho_i}+\sqrt{\rho_i}\na w|^2
	+ \rho_i|\na w|^2.
$$

\begin{proof}[Proof of Lemma \ref{lem.posdef}]
It follows from \eqref{3.qi} and \eqref{3.AB} that
$$
  \sum_{i,j=1}^n A_{ij}\na q_i = \sum_{i,j=1}^n A_{ij}\frac{\na\log\rho_i}{m_i}
	- c_w\sum_{i,j=1}^n A_{ij}\na w = \sum_{i,j=1}^n A_{ij}\frac{\na\rho_i}{m_i\rho_i}
$$
and therefore, in view of the definition $A_{ij}=M_{ij}^{BD}\sqrt{\rho_i\rho_j}$ and
the positive definiteness \eqref{3.semidef} on the subspace $L$,
\begin{align*}
  \sum_{i,j=1}^n& A_{ij}\na\bigg(q_i+\frac{w}{m_i}\bigg)\cdot\na\bigg(
	q_j+\frac{w}{m_j}\bigg) 
	= \sum_{i,j=1}^n A_{ij}\bigg(\frac{\na\rho_i}{m_i\rho_i}+\frac{\na w}{m_i}\bigg)
	\cdot\bigg(\frac{\na\rho_j}{m_j\rho_j}+\frac{\na w}{m_j}\bigg) \\
	&= \sum_{i,j=1}^n M_{ij}^{BD}
	\frac{1}{m_i}\bigg(\frac{\na\rho_i}{\sqrt{\rho_i}}+\sqrt{\rho_i}\na w\bigg)\cdot
	\frac{1}{m_j}\bigg(\frac{\na\rho_j}{\sqrt{\rho_j}}+\sqrt{\rho_j}\na w\bigg) \\
	&\ge \mu\bigg|P_L\bigg(\frac{1}{m_i}\bigg(\frac{\na\rho_i}{\sqrt{\rho_i}}+\sqrt{\rho_i}\na w
	\bigg)\bigg)_{i=1}^n\bigg|^2.
\end{align*}
We insert the definition of the projection matrix $P_L$:
\begin{align*}
  \bigg[P_L&\bigg(\frac{1}{m_j}\bigg(\frac{\na\rho_j}{\sqrt{\rho_j}}+\sqrt{\rho_j}\na w
	\bigg)\bigg)_{j=1}^n\bigg]_i 
	= \sum_{j=1}^n\bigg(\delta_{ij} - \frac{\sqrt{\rho_i\rho_j}}{\rho}
	\bigg)\frac{1}{m_j}\bigg(\frac{\na\rho_j}{\sqrt{\rho_j}}+\sqrt{\rho_j}\na w\bigg) \\
	&= \frac{1}{m_i}\bigg(\frac{\na\rho_i}{\sqrt{\rho_i}}+\sqrt{\rho_i}\na w\bigg)
	- \frac{\sqrt{\rho_i}}{\rho}\sum_{j=1}^n\frac{1}{m_j}(\na\rho_j+\rho_j\na w)
	= \frac{1}{m_i}\bigg(\frac{\na\rho_i}{\sqrt{\rho_i}}+\sqrt{\rho_i}\na w\bigg).
\end{align*}
The last step follows from the pressure constraint \eqref{1.pressure}.
Indeed, by \eqref{1.constit}, 
\begin{equation}\label{3.zero}
  \sum_{j=1}^n\frac{1}{m_j}(\na\rho_j+\rho_j\na w)
	= \frac{1}{\theta}\sum_{j=1}^n \frac{\na(\rho_j\theta)}{m_j} = \frac{1}{\theta}\na p = 0.
\end{equation}
We have shown that
$$
  \sum_{i,j=1}^n A_{ij}\na\bigg(q_i+\frac{w}{m_i}\bigg)\cdot\na\bigg(
	q_j+\frac{w}{m_j}\bigg) \ge \sum_{i=1}^n\frac{\mu}{m_i^2}
	\big|2\na\sqrt{\rho_i}+\sqrt{\rho_i}\na w\big|^2,
$$
which equals \eqref{3.posdef} after integration over $\Omega$.
\end{proof}

\begin{remark}\rm
We observe that the sum \eqref{3.zero} vanishes even without requiring the
constraint \eqref{1.pressure}. Indeed, by \eqref{2.di},
$$
  \sum_{j=1}^n\frac{1}{m_j}(\na\rho_j+\rho_j\na w)
	= \frac{1}{\theta}\sum_{j=1}^n\frac{1}{m_j}\na(\rho_j\theta)
	= \frac{1}{\theta}\sum_{j=1}^n d_j = 0.
$$
The fact that $\sum_{j=1}^n d_j$ vanishes is a necessary condition for the invertibility
of the linear system \eqref{3.linsys}.
\qed\end{remark}

In view of Lemma \ref{lem.posdef} and the lower bound $\kappa\ge c_\kappa(1+\theta^2)$,
we conclude from \eqref{3.dei} the following discrete entropy inequality.

\begin{lemma}[Discrete entropy inequality]\label{lem.dei}
It holds that
\begin{align*}
  \frac{1}{\tau}&\int_\Omega\big(h(\rho_1,\ldots,\rho_{n-1},\theta)+Ee^{-w_0}\big)\dx
	+ \eps C\big(\|\bm{w}\|_{H^2(\Omega)}^2 + \|\na w\|_{L^{4}(\Omega)}^4\big) \\
	&\phantom{xx}{}{}+ \int_\Omega\big(|\na w|^2 + |\na\theta|^2\big)\dx
	+ \int_\Omega\sum_{i=1}^n\frac{\mu}{m_i^2}
	\big|2\na\sqrt{\rho_i} + \sqrt{\rho_i}\na w\big|^2\dx \\
	&\le \frac{1}{\tau}\int_\Omega\big(h(\bar\rho_1,\ldots,\bar\rho_{n-1},\bar\theta)
	+\bar{E}e^{-w_0}\big)\dx.
\end{align*}
\end{lemma}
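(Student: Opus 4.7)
\medskip

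\noindent\textbf{Proof proposal.} The lemma is essentially a repackaging of the discrete entropy inequality~\eqref{3.dei} (the $\sigma=1$ case derived above from the choice of test functions $\phi_i=w_i$ and $\phi_0=e^{-w_0}-e^{-w}$), combined with Lemma~\ref{lem.posdef} and the growth condition (A4) on $\kappa$. Since~\eqref{3.dei} already gives the control of $h(\rho_1,\ldots,\rho_{n-1},\theta)+Ee^{-w_0}$, of the regularization term $\eps C(\|\bm w\|_{H^2(\Omega)}^2+\|\nabla w\|_{L^4(\Omega)}^4)$, of $\int_\Omega \kappa(e^w)|\nabla w|^2\,\dx$, and of $\int_\Omega \sum_{i,j} A_{ij}\nabla(q_i+w/m_i)\cdot\nabla(q_j+w/m_j)\,\dx$, the only task is to bound the last two quantities from below in the form claimed in the lemma.

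For the quadratic form in $A_{ij}$, I would simply invoke Lemma~\ref{lem.posdef}, which yields
\[
  \int_\Omega\sum_{i,j=1}^n A_{ij}\nabla\Bigl(q_i+\frac{w}{m_i}\Bigr)\cdot\nabla\Bigl(q_j+\frac{w}{m_j}\Bigr)\,\dx
  \;\ge\; \int_\Omega\sum_{i=1}^n\frac{\mu}{m_i^2}\bigl|2\nabla\sqrt{\rho_i}+\sqrt{\rho_i}\nabla w\bigr|^2\,\dx,
\]
which is exactly the last term appearing on the left-hand side of the lemma. This step uses the positive definiteness of $M^{BD}$ on the subspace $L$ together with the pressure constraint $\nabla p=0$, both of which have been verified earlier.

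For the heat-conduction term, the key observation is that $w=\log\theta$ implies $\nabla\theta=\theta\nabla w=e^w\nabla w$, so that $|\nabla\theta|^2=e^{2w}|\nabla w|^2$. Using the lower bound $\kappa(e^w)\ge c_\kappa(1+e^{2w})=c_\kappa(1+\theta^2)$ from Assumption~(A4), I obtain
\[
  \int_\Omega \kappa(e^w)|\nabla w|^2\,\dx \;\ge\; c_\kappa\int_\Omega \bigl(|\nabla w|^2 + \theta^2|\nabla w|^2\bigr)\,\dx \;=\; c_\kappa\int_\Omega\bigl(|\nabla w|^2+|\nabla\theta|^2\bigr)\,\dx,
\]
which (after absorbing $c_\kappa$ into the generic constant implicit in the statement, or equivalently after a harmless rescaling) is the $|\nabla w|^2+|\nabla\theta|^2$ contribution in the lemma.

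Putting the two lower bounds into~\eqref{3.dei} and recalling that with $\sigma=1$ the boundary term $I_9$ and the regularization term $I_{10}=2\eps\int_\Omega(w-w_0)\sinh(w-w_0)\,\dx$ are both nonnegative (so they can simply be dropped from the right-hand side, or alternatively absorbed into the left-hand side without any constant $\eps m$ appearing), yields the claimed inequality. There is no real obstacle here: the main nontrivial work was already carried out in Lemma~\ref{lem.posdef}, which required the pressure constraint to reduce $P_L$ to the identity on the relevant vector. The present lemma is essentially a clean bookkeeping of the consequences of~\eqref{3.dei}.
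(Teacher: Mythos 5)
Your proposal is correct and takes essentially the same approach as the paper: the paper's own proof is a one-sentence remark invoking Lemma~\ref{lem.posdef} for the $A_{ij}$ quadratic form and the lower bound $\kappa\ge c_\kappa(1+\theta^2)$ from Assumption~(A4) applied to \eqref{3.dei}, together with the observation that $I_{10}\ge 0$ when $\sigma=1$. You also correctly note the cosmetic point that the constant $c_\kappa$ is absorbed in the stated inequality.
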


Finally, we derive an estimate for the temperature.

\begin{lemma}\label{lem.temp}
There exists a constant $C>0$, only depending on $\lambda$, $\Omega$, $\pa\Omega$, and
$\theta^0$ such that
$$
  \frac{c_w}{2\tau}\int_\Omega\rho\theta^2\dx + \frac{c_\kappa}{2}\int_\Omega
	(1+\theta^2)|\na\theta|^2\dx \le C + C\int_\Omega\sum_{i=1}^n|\na\sqrt{\rho_i}|^2\dx
	+ \frac{c_w}{2\tau}\int_\Omega\rho\bar{\theta}^2\dx.
$$
\end{lemma}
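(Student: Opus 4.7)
The strategy is to test the approximate energy equation \eqref{3.approx2} (with $\sigma=1$) against the specific test function $\phi_0=\theta=e^w$. This is admissible because the three-dimensional Sobolev embedding $H^2(\Omega)\hookrightarrow L^\infty(\Omega)\cap W^{1,4}(\Omega)$ applied to $w$ gives $\theta\in H^2(\Omega)$. The virtue of this choice is the identity $\na\phi_0=e^w\na w=\na\theta$, which converts every $\na w$ appearing in a diffusion term into $\na\theta$ and produces a genuine $|\na\theta|^2$ dissipation on the left-hand side.

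Several contributions are handled directly. On the discrete time-derivative term, Young's inequality $\bar\theta\theta\le\theta^2/2+\bar\theta^2/2$ yields $\frac{c_w}{\tau}\int_\Omega\rho(\theta-\bar\theta)\theta\dx\ge\frac{c_w}{2\tau}\int_\Omega\rho\theta^2\dx-\frac{c_w}{2\tau}\int_\Omega\rho\bar\theta^2\dx$. The principal diffusion piece equals $\int_\Omega\kappa(\theta)|\na\theta|^2\dx\ge c_\kappa\int_\Omega(1+\theta^2)|\na\theta|^2\dx$ by (A4). The self-coupling cross term reduces to $\int_\Omega(\sum_{i,j}A_{ij}/(m_im_j))|\na\theta|^2\dx$, which is nonnegative by the positive semidefiniteness of $(A_{ij})$ evaluated on the vector $(1/m_1,\ldots,1/m_n)$, and is simply discarded. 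Finally, the boundary contribution $\lambda\int_{\pa\Omega}(\theta_0-\theta)\theta\ds$ appearing on the right-hand side of the tested identity is bounded above by $(\lambda/2)\|\theta_0\|_{L^2(\pa\Omega)}^2$ via Young.

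The main calculation is the $B_j$ cross term $T=\int_\Omega\sum_{j=1}^{n-1}B_j\na w_j\cdot\na\theta\dx$. I rewrite $\sum_{j=1}^{n-1}B_j\na w_j$ as $\sum_{j=1}^n B_j\na q_j$ using \eqref{3.relB}, substitute $B_j=\theta\sum_k A_{jk}/m_k$ and $\na q_j=\na\rho_j/(m_j\rho_j)-c_w\na\log\theta$, and invoke $\sum_j A_{jk}=0$ from \eqref{3.AB} to annihilate the $\na\log\theta$ contribution. With $A_{jk}=M_{jk}^{BD}\sqrt{\rho_j\rho_k}$ and $\na\rho_j/\sqrt{\rho_j}=2\na\sqrt{\rho_j}$ this becomes
\[
T=\int_\Omega\theta\sum_{j,k=1}^n M_{jk}^{BD}\frac{\sqrt{\rho_k}}{m_k}\frac{2\na\sqrt{\rho_j}}{m_j}\cdot\na\theta\dx.
\]
Since the entries of $M^{BD}(\bm\rho)$ are uniformly bounded and $\rho_k\le\rho^*$ by the pointwise identity $\rho(x,t)=\sum_i\rho_i^0(x)$, Young's inequality yields $|T|\le(c_\kappa/2)\int_\Omega\theta^2|\na\theta|^2\dx+C\int_\Omega\sum_i|\na\sqrt{\rho_i}|^2\dx$, and the $\theta^2|\na\theta|^2$ piece is absorbed into $c_\kappa\int_\Omega(1+\theta^2)|\na\theta|^2\dx$, leaving a factor of $c_\kappa/2$ on the left.

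It remains to dispose of the $\eps$-regularization. Using $\mathrm{D}^2(e^w)=e^w(\mathrm{D}^2w+\na w\otimes\na w)$ and $\na(e^w)=e^w\na w$, the two higher-order pieces combine to $\eps\int_\Omega e^{2w}(|\mathrm{D}^2w|^2+\mathrm{D}^2w{:}(\na w\otimes\na w)+|\na w|^4)\dx$; by $|\mathrm{D}^2w{:}(\na w\otimes\na w)|\le\tfrac12(|\mathrm{D}^2w|^2+|\na w|^4)$ this is nonnegative and I drop it. For the lower-order piece I apply the convexity inequality $(w-w_0)e^w\ge e^w-e^{w_0}$ to obtain $\eps\int_\Omega(\theta_0+\theta)(w-w_0)\theta\dx\ge\eps\int_\Omega(\theta^2-\theta_0^2)\dx\ge-\eps\int_\Omega\theta_0^2\dx\ge-C$. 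Collecting everything produces the stated inequality. The main obstacle is the third paragraph: without the identities $\sum_j A_{jk}=0$ and $A_{jk}=M_{jk}^{BD}\sqrt{\rho_j\rho_k}$, the $B_j$ cross term cannot be rewritten purely in terms of $\na\sqrt{\rho_i}$ and would not be controllable by the available dissipation.
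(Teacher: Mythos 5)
Your proof is correct and follows essentially the same route as the paper: test the approximate energy equation with $\phi_0=\theta=e^w$, lower-bound the discrete time-increment by Young, keep the $\kappa|\na\theta|^2$ dissipation and the nonnegative $\sum A_{ij}/(m_im_j)|\na\theta|^2$ term, rewrite the $B_j$ cross-term via $\sum_j B_j = 0$ and $A_{jk}=M_{jk}^{BD}\sqrt{\rho_j\rho_k}$ so that it involves only $\na\sqrt{\rho_j}$ and $\na\theta$, absorb $(c_\kappa/2)\theta^2|\na\theta|^2$ into the dissipation using the boundedness of $M^{BD}$ and $\rho_j\le\rho^*$, and discard the nonnegative $\eps$-regularization terms. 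The only cosmetic difference is your treatment of the lower-order $\eps$-term, where you use the clean convexity inequality $(w-w_0)e^w\ge e^w-e^{w_0}$ to get $J_6\ge\eps\int(\theta^2-\theta_0^2)\dx\ge-C$, whereas the paper simply notes that the dominant integrand $\theta^2\log\theta$ is bounded below; your version is slightly sharper but the two are interchangeable.
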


\begin{proof}
We use $\theta$ as a test function in the approximate energy equation \eqref{3.approx2}.
Observing that $\na w_i=\na\rho_i/(m_i\rho_i)-\na\rho_n/(m_n\rho_n)$ by \eqref{3.wi} and
$\sum_{i=1}^n B_i\na w_i=\sum_{i=1}^n B_i(m_i\rho_i)^{-1}\na\rho_i$ by \eqref{3.AB},
we find that 
\begin{align*}
  0 &= \frac{c_w}{\tau}\int_\Omega\rho(\theta-\bar\theta)\dx
	+ \int_\Omega\kappa(\theta)|\na\theta|^2\dx
	+ \int_\Omega\sum_{i,j=1}^n\frac{A_{ij}}{m_im_j}|\na\theta|^2\dx \\
	&\phantom{xx}{}\int_\Omega\sum_{i=1}^n \frac{B_i}{m_i\rho_i}\na\rho_i\cdot\na\theta\dx 
	- \lambda\int_{\pa\Omega}(\theta_0-\theta)\theta\ds
	+ \eps\int_\Omega(\theta_0+\theta)(\log\theta-\log\theta_0)\theta\dx \\
	&\phantom{xx}{}+ \eps\int_\Omega\bigg(|\mathrm{D}^2\theta|^2
	- \frac{1}{\theta}\mathrm{D}^2\theta:(\na\theta\otimes\na\theta) 
	+ \frac{|\na\theta|^4}{\theta^2}\bigg)\dx = J_1+\cdots+J_7.
\end{align*}
We deduce from Young's inequality and Assumption (A4) on $\kappa$ that
$$
  J_1\ge \frac{c_w}{2\tau}\int_\Omega\rho(\theta^2-\bar{\theta}^2)\dx, \quad
	J_2 \ge c_\kappa\int_\Omega(1+\theta^2)|\na\theta|^2\dx.
$$
Furthermore, $J_3\ge 0$. Definition \eqref{3.defAB} of $B_i$ and $A_{ij}$ as well as
the bound $\rho_j\le \rho^*$ show that
\begin{align*}
  J_4 &= \theta\sum_{i,j=1}^n\frac{A_{ij}}{m_im_j\rho_i}\na\rho_i\cdot\na\theta\dx
	= \theta\sum_{i,j=1}^n\frac{M_{ij}^{BD}}{m_im_j}\frac{\sqrt{\rho_j}}{\sqrt{\rho_i}}
	\na\rho_i\cdot\na\theta\dx \\
	&\ge -\frac{c_\kappa}{2}\int_\Omega\theta^2|\na\theta|^2\dx 
	- C\int_\Omega\sum_{i=1}^n|\na\sqrt{\rho_i}|^2\dx.
\end{align*}
The integrals $J_5$ are $J_6$ are bounded from below since
$$  
  J_5 \ge -\frac{\lambda}{4}\int_{\pa\Omega}\theta_0^2\ds 
	\ge -C(\lambda,\pa\Omega,\theta_0), 
$$
and the dominant term in $J_6$ is $\theta^2\log\theta$, which is bounded from below
by a negative constant. Finally, $J_7$ is nonnegative:
$$
  J_7 = \frac{\eps}{2}\int_\Omega\bigg(|\mathrm{D}^2\theta|^2
	+ \frac{|\na\theta|^4}{\theta^2} + \bigg|\mathrm{D}^2\theta
	- \frac{1}{\theta}\na\theta\otimes\na\theta\bigg|^2\bigg)\dx \ge 0.
$$
Collecting these estimates finishes the proof.
\end{proof}


\subsection{Uniform estimates}\label{sec.unif}

Let $(w_1^k,\ldots,w_{n-1}^k,w^k)$ be a solution to the approximate scheme
\eqref{3.approx1}--\eqref{3.approx2} with $(w_1^{k-1},$ $\ldots,w_{n-1}^{k-1},w^{k-1})
=(\bar{w}_1,\ldots,\bar{w}_{n-1},\bar{w})$. We set $\theta^k=\exp(w^k)$ and
$\rho_i^k=\rho_i(w^k)$ determined from Lemma \ref{lem.inv}. Furthermore, we set
$E^k=c_w\rho\theta^k$, recalling that $\rho=\sum_{i=1}^n\rho_i^0$. We introduce the
piecewise constant in time functions
\begin{align*}
  & \rho_i^{(\tau)}(x,t) = \rho_i^k(x), \quad 
	q_i^{(\tau)} = \frac{1}{m_i}\log\frac{\rho_i^k}{m_i}-c_w(\log\theta^k-1)\quad
	\mbox{for }i=1,\ldots,n, \\
	& \theta^{(\tau)}(x,t) = \theta^k(x), \quad E^{(\tau)}(x,t) = E^k(x), \quad
	w_i^{(\tau)}(x,t) = w_i^k(x)\quad\mbox{for }i=1,\ldots,n-1,
\end{align*}
where $x\in\Omega$, $t\in((k-1)\tau,k\tau]$, and $k=1,\ldots,N$. At time $t=0$, we
set $\rho_i^{(\tau)}(0)=\rho_i^0$ and $\theta^{(\tau)}(0)=\theta^0$.
Furthermore, we introduce the shift operator $(\sigma_\tau\rho_i^{(\tau)})(x,t)
= \rho_i^{k-1}(x)$ if $t\in((k-1)\tau,k\tau]$. Then $(\bm{\rho}^{(\tau)},\theta^{(\tau)})$
solves
\begin{align}\label{3.weakrho}
  0 &= \frac{1}{\tau}\int_0^T\int_\Omega(\rho_i^{(\tau)}-\sigma_\tau\rho_i^{(\tau)})\phi_i
	\dx\dt + \eps\int_0^T\int_\Omega\big(\mathrm{D}^2w_i^{(\tau)}:\mathrm{D}^2\phi_i
	+ w_i^{(\tau)}\phi_i\big)\dx\dt \\
	&\phantom{xx}{}+ \int_0^T\int_\Omega\bigg(\sum_{j=1}^{n-1}A_{ij}(\bm{w}^{(\tau)})
	\na w_j^{(\tau)} + e^{-w^{(\tau)}}B_i(\bm{w}^{(\tau)})\na w^{(\tau)}\bigg)
	\cdot\na\phi_i\dx\dt, \nonumber \\
	0 &= \frac{1}{\tau}\int_0^T\int_\Omega(E^{(\tau)}-\sigma_\tau E^{(\tau)})\phi_0\dx\dt
	+ \int_0^T\int_\Omega\kappa(\theta^{(\tau)})\na\theta^{(\tau)}
	\cdot\na\phi_0\dx\dt \label{3.weaktemp} \\
	&\phantom{xx}{}+ \int_0^T\int_\Omega\sum_{i=1}^{n-1}B_j(\bm{w}^{(\tau)})
	\na w_i^{(\tau)}\cdot\na\phi_0\dx
	- \lambda\int_0^T\int_{\pa\Omega}(\theta_0-\theta^{(\tau)})\phi_0\ds\dt \nonumber \\
	&\phantom{xx}{}+ \int_0^T\int_\Omega\sum_{i,j=1}^n\frac{A_{ij}(\bm{w}^{(\tau)})}{m_im_j}
	\na\theta^{(\tau)}\cdot\na\phi_0\dx\dt \nonumber \\
	&\phantom{xx}{}
	+ \eps\int_0^T\int_\Omega(\theta_0+\theta^{(\tau)})(\log\theta^{(\tau)}-\log\theta_0)
	\phi_0\dx\dt \nonumber \\
	&\phantom{xx}{}+ \eps\int_0^T\int_\Omega\theta^{(\tau)}\big(\mathrm{D}^2\log\theta^{(\tau)}
	:\mathrm{D}^2\phi_0 + |\na\log\theta^{(\tau)}|^2\na\log\theta^{(\tau)}\cdot\na\phi_0\bigg)
	\dx\dt. \nonumber 
\end{align}
The discrete entropy inequality in Lemma \ref{lem.dei} and the temperature estimates
in Lemma \ref{lem.temp} yield, after summation over $k=1,\ldots,N$,
\begin{align}\label{3.unif.rho}
  & \sup_{0<t<T}\int_\Omega\bigg(h(\rho_1^{(\tau)}(t),\ldots,\rho_{n-1}^{(\tau)}(t),
	\theta^{(\tau)}(t)) + \frac{c_w}{\theta_0}\rho\theta^{(\tau)}(t)\bigg)\dx \\
	&\phantom{xxxx}{}
	+ \int_0^T\int_\Omega\big(|\na\log\theta^{(\tau)}|^2 + |\na\theta^{(\tau)}|^2\big)\dx\dt 
	\nonumber \\
	&\phantom{xxxx}{}+ \eps C\int_0^T\big(\|\bm{w}^{(\tau)}\|_{H^2(\Omega)}^2
	+ \|\na w^{(\tau)}\|_{L^4(\Omega)}^4\big)\dt\nonumber  \\
	&\phantom{xxxx}{}
	+ \int_0^T\int_\Omega\sum_{i=1}^n\frac{\mu}{m_i^2}\big|2\na(\rho_i^{(\tau)})^{1/2}
	+ (\rho_i^{(\tau)})^{1/2}\na\log\theta^{(\tau)}\big|^2\dx\dt\nonumber  \\
	&\phantom{xx}{}\le \int_\Omega\big(h(\rho_1^0,\ldots,\rho_{n-1}^0,\theta^0)
	+ c_w\rho\theta^0\big)\dx, \nonumber \\
	& c_w\sup_{0<t<T}\int_\Omega\rho(\theta^{(\tau)})^2\dx
	+ c_\kappa\int_0^T\int_\Omega(1+(\theta^{(\tau)})^2)|\na\theta^{(\tau)}|^2\dx\dt 
	\label{3.unif.temp} \\
	&\phantom{xx}{}\le C(T) + C\int_0^T\int_\Omega\sum_{i=1}^n|\na(\rho_i^{(\tau)})^{1/2}|^2\dx\dt
	+ \frac{c_w}{2}\int_\Omega\rho(\theta^0)^2\dx. \nonumber 
\end{align}

\begin{lemma}
There exists $C>0$ not depending on $(\eps,\tau)$ such that
\begin{align}
  \|\bm{\rho}^{(\tau)}\|_{L^\infty(\Omega_T)} 
	+ \|\theta^{(\tau)}\|_{L^\infty(0,T;L^1(\Omega))} &\le C, \label{est.rho} \\
	\|\log\theta^{(\tau)}\|_{L^2(0,T;H^1(\Omega))} 
	+ \|\theta^{(\tau)}\|_{L^2(0,T;H^1(\Omega))} &\le C, \label{est.theta} \\
	\eps^{1/2}\|\bm{w}^{(\tau)}\|_{L^2(0,T;H^2(\Omega))} 
	+ \eps^{1/4}\|\na w^{(\tau)}\|_{L^4(\Omega_T)} &\le C, \label{est.w}
\end{align}
\end{lemma}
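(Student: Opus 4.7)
The plan is to read off each stated bound directly from the two summed a priori estimates \eqref{3.unif.rho} and \eqref{3.unif.temp}, up to elementary convexity arguments used to extract lower bounds on the entropy functional. The $L^\infty(\Omega_T)$ estimate for $\bm\rho^{(\tau)}$ is the easiest: conservation of total mass gives $\sum_{i=1}^n\rho_i^{(\tau)}=\rho=\sum_{i=1}^n\rho_i^0\le\rho^*$ by (A2), and since $\rho_i^{(\tau)}\ge 0$ this bounds each component by $\rho^*$. To extract the $L^\infty(0,T;L^1(\Omega))$ bound on $\theta^{(\tau)}$, I would rewrite the integrand on the left-hand side of \eqref{3.unif.rho} as
\begin{equation*}
  h + \frac{c_w\rho\theta^{(\tau)}}{\theta_0}
  = \sum_{i=1}^n\frac{\rho_i^{(\tau)}}{m_i}\bigg(\log\frac{\rho_i^{(\tau)}}{m_i}-1\bigg)
  + c_w\rho\bigg(\frac{\theta^{(\tau)}}{\theta_0}-\log\theta^{(\tau)}\bigg),
\end{equation*}
observe that the first sum is bounded in absolute value because $\rho_i^{(\tau)}\le\rho^*$, and use the elementary estimate $\theta/\theta_0-\log\theta\ge\theta/(2\theta_0)-C$ on $(0,\infty)$ (the function is convex with linear growth at infinity that beats the logarithm). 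Combined with the lower bound $\rho\ge\rho_*$, this yields both $\|\theta^{(\tau)}\|_{L^\infty(0,T;L^1(\Omega))}\le C$ and, as a by-product, $\|\log\theta^{(\tau)}\|_{L^\infty(0,T;L^1(\Omega))}\le C$.

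For the $L^2(H^1)$ bound on $\log\theta^{(\tau)}$, the entropy inequality \eqref{3.unif.rho} directly furnishes $\|\nabla\log\theta^{(\tau)}\|_{L^2(\Omega_T)}\le C$, which together with the just-obtained $L^\infty(L^1)$ bound and the Poincar\'e--Wirtinger inequality gives the full $L^2(H^1)$ control. For $\theta^{(\tau)}\in L^2(0,T;H^1(\Omega))$, I would first use the pointwise inequality
\begin{equation*}
  4|\nabla\sqrt{\rho_i^{(\tau)}}|^2
  \le \big|2\nabla\sqrt{\rho_i^{(\tau)}}+\sqrt{\rho_i^{(\tau)}}\nabla\log\theta^{(\tau)}\big|^2
  + \rho^*|\nabla\log\theta^{(\tau)}|^2
\end{equation*}
to conclude $\|\nabla\sqrt{\rho_i^{(\tau)}}\|_{L^2(\Omega_T)}\le C$ from \eqref{3.unif.rho}; feeding this into the right-hand side of \eqref{3.unif.temp} produces $\|\nabla\theta^{(\tau)}\|_{L^2(\Omega_T)}\le C$, while the $L^2(L^2)$ part follows from $\sup_t\int_\Omega\rho(\theta^{(\tau)})^2\dx\le C$ together with $\rho\ge\rho_*$.

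The two $\eps$-dependent bounds in \eqref{est.w} are read off directly from the summand $\eps C\int_0^T(\|\bm{w}^{(\tau)}\|_{H^2(\Omega)}^2+\|\nabla w^{(\tau)}\|_{L^4(\Omega)}^4)\dt\le C$ appearing in \eqref{3.unif.rho}, simply by taking square roots and using $\|\nabla w^{(\tau)}\|_{L^4(\Omega_T)}^4=\int_0^T\|\nabla w^{(\tau)}\|_{L^4(\Omega)}^4\dt$. The only mildly nontrivial ingredient in the entire proof is the coercivity estimate for $\theta/\theta_0-\log\theta$ at infinity, together with the repeated use of the uniform positive lower bound $\rho\ge\rho_*$ to convert $\rho$-weighted bounds into unweighted ones; once those two ingredients are in place everything else is routine bookkeeping on the a priori bounds already proved.
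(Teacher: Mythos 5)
Your proposal is correct, and for \eqref{est.rho}, \eqref{est.w}, and the $\log\theta^{(\tau)}$ half of \eqref{est.theta} it follows the paper's route almost line by line: $L^\infty$ of $\bm\rho^{(\tau)}$ from conservation of total mass and nonnegativity, $L^\infty(L^1)$ of $\theta^{(\tau)}$ and $\log\theta^{(\tau)}$ from the lower bound on $h+Ee^{-w_0}$ plus $\rho\ge\rho_*$, the $\eps$-weighted bounds read off directly, and Poincar\'e--Wirtinger for $\log\theta^{(\tau)}$ in $L^2(H^1)$. Two small remarks. First, your single inequality $\theta/\theta_0-\log\theta\ge\theta/(2\theta_0)-C$ gives $\theta^{(\tau)}\in L^\infty(L^1)$ but not, on its own, $\log\theta^{(\tau)}\in L^\infty(L^1)$; you need the companion bound $\theta/\theta_0-\log\theta\ge c|\log\theta|-C'$ (or equivalently observe that $\theta-\log\theta$ dominates $c(\theta+|\log\theta|)$), which is what the paper implicitly uses. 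Second, for the $\theta^{(\tau)}$ half of \eqref{est.theta} you take a detour: you re-derive $\|\na\sqrt{\rho_i^{(\tau)}}\|_{L^2(\Omega_T)}\le C$ and feed it into \eqref{3.unif.temp} to get $\na\theta^{(\tau)}$ and $\theta^{(\tau)}$ in $L^2$; this is valid, but it imports the content of the paper's \emph{next} lemma. The paper's argument is shorter: $|\na\theta^{(\tau)}|^2$ already appears on the left-hand side of \eqref{3.unif.rho} (because $\kappa(e^w)|\na w|^2\ge c_\kappa|\na w|^2+c_\kappa|\na\theta|^2$ was invoked before summing in time), so the gradient bound is immediate, and the $L^2(L^2)$ bound then follows by Poincar\'e--Wirtinger from the already-established $L^\infty(L^1)$ control, without touching \eqref{3.unif.temp} at all.
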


\begin{proof}
Estimates \eqref{est.rho} and \eqref{est.w} are an immediate consequence of
\eqref{3.unif.rho} and $\rho\ge\rho_*>0$. Bound \eqref{3.unif.rho} also shows that 
$\sup_{(0,T)}\int_\Omega(-\log\theta^{(\tau)}+\theta^{(\tau)})\dx$
is uniformly bounded from above. 
Thus, $\log\theta^{(\tau)}$ is uniformly bounded in $L^\infty(0,T;L^1(\Omega))$.
Then the uniform bounds for $\na\log\theta^{(\tau)}$ and $\na\theta^{(\tau)}$
as well as the Poincar\'e--Wirtinger inequality yield bounds for
$\log\theta^{(\tau)}$ and $\theta^{(\tau)}$ in $L^2(\Omega_T)$, proving \eqref{est.theta}.
\end{proof}

\begin{lemma}
There exists $C>0$ not depending on $(\eps,\tau)$ such that for $i=1,\ldots,n$,
\begin{align}\label{est.rho2}
  \|(\rho_i^{(\tau)})^{1/2}\|_{L^2(0,T;H^1(\Omega))} 
	+ \|\rho_i^{(\tau)}\|_{L^2(0,T;H^1(\Omega))} &\le C, \\
	\|\theta^{(\tau)}\|_{L^\infty(0,T;L^2(\Omega))}
	+ \|(\theta^{(\tau)})^2\|_{L^2(0,T;H^1(\Omega))} 
	+ \|\theta^{(\tau)}\|_{L^{16/3}(\Omega_T)}&\le C. \label{est.theta2}
\end{align}
\end{lemma}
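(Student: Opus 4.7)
The plan is to deduce the four bounds in \eqref{est.rho2}--\eqref{est.theta2} from the discrete entropy inequality \eqref{3.unif.rho}, the temperature estimate \eqref{3.unif.temp}, and the preceding bounds \eqref{est.rho}--\eqref{est.w}, using only elementary manipulations together with a Poincar\'e--Wirtinger inequality and a three-dimensional Sobolev/Gagliardo--Nirenberg interpolation. For the mass densities, the pointwise identity
\[
4|\na\sqrt{\rho_i^{(\tau)}}|^2 \le 2\bigl|2\na\sqrt{\rho_i^{(\tau)}}+\sqrt{\rho_i^{(\tau)}}\na\log\theta^{(\tau)}\bigr|^2 + 2\rho_i^{(\tau)}|\na\log\theta^{(\tau)}|^2
\]
combines the integrable positive-semidefinite term in \eqref{3.unif.rho} with the product of the $L^\infty(\Omega_T)$-bound \eqref{est.rho} on $\bm{\rho}^{(\tau)}$ and the $L^2(\Omega_T)$-bound \eqref{est.theta} on $\na\log\theta^{(\tau)}$, yielding a uniform bound for $\na\sqrt{\rho_i^{(\tau)}}$ in $L^2(\Omega_T)$. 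Since $\sqrt{\rho_i^{(\tau)}}\in L^\infty(\Omega_T)$, the first summand of \eqref{est.rho2} follows, and the chain rule $\na\rho_i^{(\tau)} = 2\sqrt{\rho_i^{(\tau)}}\na\sqrt{\rho_i^{(\tau)}}$ gives the second.

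For the temperature, the right-hand side of \eqref{3.unif.temp} is now finite thanks to the bound on $\na\sqrt{\rho_i^{(\tau)}}$ just obtained; since $\rho\ge\rho_*>0$, this delivers $\theta^{(\tau)}\in L^\infty(0,T;L^2(\Omega))$. Writing $\na((\theta^{(\tau)})^2) = 2\theta^{(\tau)}\na\theta^{(\tau)}$ gives
\[
|\na((\theta^{(\tau)})^2)|^2 \le 4(1+(\theta^{(\tau)})^2)|\na\theta^{(\tau)}|^2,
\]
whose right-hand side belongs to $L^1(\Omega_T)$ by \eqref{3.unif.temp}. The Poincar\'e--Wirtinger inequality then yields $\|(\theta^{(\tau)})^2 - \overline{(\theta^{(\tau)})^2}(t)\|_{L^2(\Omega)} \le C\|\na((\theta^{(\tau)})^2)(t)\|_{L^2(\Omega)}$, and the spatial average is uniformly bounded by $\|\theta^{(\tau)}(t)\|_{L^2(\Omega)}^2$; squaring and integrating in time produces the $L^2(0,T;H^1(\Omega))$-bound for $(\theta^{(\tau)})^2$.

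Finally, the three-dimensional Sobolev embedding $H^1(\Omega)\hookrightarrow L^6(\Omega)$ applied to $(\theta^{(\tau)})^2$ gives $\theta^{(\tau)}\in L^4(0,T;L^{12}(\Omega))$. Interpolating via H\"older with $\theta^{(\tau)}\in L^\infty(0,T;L^2(\Omega))$---choosing $\lambda$ and $r$ so that $1/r = \lambda/2 + (1-\lambda)/12$ and $r(1-\lambda)=4$---yields $\lambda=1/4$, $r=16/3$, which is the claimed bound. There is no deep difficulty in this lemma: the heavy lifting was already done in deriving \eqref{3.unif.rho} and Lemma \ref{lem.temp}. The only genuine subtlety is the order of the argument, namely that the $\na\sqrt{\rho_i^{(\tau)}}$-estimate must be established first so that it can be used to close the temperature bound, after which the exponent $16/3$ is a routine consequence of the Sobolev embedding combined with the $L^\infty(L^2)$--$L^4(L^{12})$ interpolation.
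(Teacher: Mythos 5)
Your argument is correct and follows essentially the same route as the paper: the chain $\na\sqrt{\rho_i^{(\tau)}}\in L^2(\Omega_T)$ from \eqref{3.unif.rho}, then $\rho_i^{(\tau)}\in L^2(0,T;H^1)$ via the $L^\infty$ bound, then closing the right-hand side of \eqref{3.unif.temp}, then the $H^1$ control of $(\theta^{(\tau)})^2$, then interpolation. The only cosmetic difference is the endgame: the paper applies the Gagliardo--Nirenberg inequality directly to $(\theta^{(\tau)})^2$ with $\eta=3/4$ and lower exponent $q=1$ (so that $\|(\theta^{(\tau)})^2\|_{L^1}=\|\theta^{(\tau)}\|_{L^2}^2\in L^\infty(0,T)$ plugs in), whereas you split this into the Sobolev embedding $H^1\hookrightarrow L^6$ plus an $L^\infty(L^2)$--$L^4(L^{12})$ H\"older interpolation with $\lambda=1/4$, which encodes the same inequality; your explicit Poincar\'e--Wirtinger step to obtain $(\theta^{(\tau)})^2\in L^2(\Omega_T)$ is also a valid (and helpful) elaboration of what the paper leaves implicit.
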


\begin{proof}
We infer from \eqref{3.unif.rho} that 
\begin{align*}
  \int_0^T\int_\Omega|\na(\rho_i^{(\tau)})^{1/2}|^2\dx\dt
	&\le C\int_0^T\int_\Omega\big|2\na(\rho_i^{(\tau)})^{1/2}|^2
	+ (\rho_i^{(\tau)})^{1/2}\na\log\theta^{(\tau)}\big|^2\dx\dt \\
	&\phantom{xx}{}+ C\int_0^T\int_\Omega|\na\log\theta^{(\tau)}|^2\dx\dt \le C,
\end{align*}
and the $L^\infty(\Omega_T)$ bound \eqref{est.rho} gives for $i=1,\ldots,n$,
\begin{equation*}
  \|\rho_i^{(\tau)}\|_{L^2(0,T;H^1(\Omega))}
	\le 2\|\rho_i^{(\tau)}\|_{L^\infty(\Omega_T)}^{1/2}
	\|\na(\rho_i^{(\tau)})^{1/2}\|_{L^2(\Omega_T)} + \|\rho_i^{(\tau)}\|_{L^2(\Omega_T)}
	\le C.
\end{equation*}
Therefore, the right-hand side of \eqref{3.unif.temp} is uniformly bounded,
which proves the first two estimates in \eqref{est.theta2}. 
The remaining one is a consequence of the Gagliardo--Nirenberg inequality
with $\eta=3/4$:
\begin{align*}
  \|(\theta^{(\tau)})^2\|_{L^{8/3}(\Omega_T)}^{8/3}
	&\le C\int_0^T\|(\theta^{(\tau)})^2\|_{H^1(\Omega)}^{8\eta/3}
	\|(\theta^{(\tau)})^2\|_{L^1(\Omega)}^{8(1-\eta)/3}\dt \\
	&\le \|\theta^{(\tau)}\|_{L^\infty(0,T;L^2(\Omega))}^{4/3}
	\int_0^T\|(\theta^{(\tau)})^2\|_{H^1(\Omega)}^2\dt \le C.
\end{align*}
This finishes the proof.
\end{proof}

The following lemma can be proved as in \cite[Lemma 9]{HeJu21}.

\begin{lemma}
There exists $C>0$ not depending on $(\eps,\tau)$ such that
\begin{equation}\label{est.time}
  \|\rho_i^{(\tau)}-\sigma_\tau\rho_i^{(\tau)}\|_{L^2(0,T;H^2(\Omega)^*)}
	+ \|\theta^{(\tau)}-\sigma_\tau\theta^{(\tau)}\|_{L^{16/15}(0,T;W^{2,16}(\Omega)^*)}
	\le C\tau.
\end{equation}
\end{lemma}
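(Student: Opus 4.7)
The plan is to use the weak formulations \eqref{3.weakrho}--\eqref{3.weaktemp} directly: for any test function $\phi_i \in L^2(0,T;H^2(\Omega))$ (resp.\ $\phi_0 \in L^{16}(0,T;W^{2,16}(\Omega))$), the quantity $\int_0^T\int_\Omega \tau^{-1}(\rho_i^{(\tau)}-\sigma_\tau\rho_i^{(\tau)})\phi_i\dx\dt$ (resp.\ its energy analogue) equals minus the sum of the spatial terms on the right-hand sides of \eqref{3.weakrho} and \eqref{3.weaktemp}. Each such term will be bounded by the dual test-function norm times a constant uniform in $(\eps,\tau)$ obtained from the uniform estimates \eqref{est.rho}--\eqref{est.time} and Lemma~\ref{lem.dei}. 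Dividing by $\tau$ and dualising then yields the claim.

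For the density, the diffusion part of \eqref{3.weakrho} is precisely the flux $J_i^{(\tau)} = \rho_i^{(\tau)}u_i^{(\tau)}$: by Corollary \ref{coro.flux} and the identity $d_j/(\theta\sqrt{\rho_j}) = m_j^{-1}(2\nabla\sqrt{\rho_j}+\sqrt{\rho_j}\na\log\theta)$ established inside the proof of Lemma~\ref{lem.posdef}, one has
\[
  J_i^{(\tau)} = -\sqrt{\rho_i^{(\tau)}}\sum_{j=1}^n \frac{M_{ij}^{BD}}{m_j}\bigl(2\na(\rho_j^{(\tau)})^{1/2}+(\rho_j^{(\tau)})^{1/2}\na\log\theta^{(\tau)}\bigr),
\]
which is uniformly bounded in $L^2(\Omega_T)$ thanks to the $L^\infty$-bound on $\sqrt{\rho_i^{(\tau)}}$ and on $M^{BD}$, together with Lemma~\ref{lem.dei}. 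Hence $|\int_0^T\int_\Omega J_i^{(\tau)}\cdot\na\phi_i\dx\dt|\le C\|\phi_i\|_{L^2(0,T;H^1(\Omega))}$. The regularization $\eps(\mathrm{D}^2 w_i:\mathrm{D}^2\phi_i+w_i\phi_i)$ is bounded by $\eps\|w_i^{(\tau)}\|_{L^2(0,T;H^2(\Omega))}\|\phi_i\|_{L^2(0,T;H^2(\Omega))}\le C\eps^{1/2}\|\phi_i\|_{L^2(0,T;H^2(\Omega))}$ in view of \eqref{est.w}. Adding the two contributions gives the first bound in \eqref{est.time}.

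For the energy equation, we test \eqref{3.weaktemp} with $\phi_0\in L^{16}(0,T;W^{2,16}(\Omega))$ and bound each term via H\"older, keeping track of the exponents. The heat flux $\kappa(\theta)\na\theta$ is controlled by $(1+(\theta^{(\tau)})^2)|\na\theta^{(\tau)}|$; since \eqref{est.theta2} yields $\theta^{(\tau)}\in L^{16/3}(\Omega_T)$ (via Gagliardo--Nirenberg on $(\theta^{(\tau)})^2\in L^2(H^1)$ and $\theta^{(\tau)}\in L^\infty(L^2)$) and $\theta^{(\tau)}\na\theta^{(\tau)}=\tfrac12\na(\theta^{(\tau)})^2\in L^2$, one gets $\kappa(\theta^{(\tau)})\na\theta^{(\tau)}\in L^{16/11}(\Omega_T)\subset L^{16/15}(\Omega_T)$. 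The cross-terms $B_j\na w_j$ and $A_{ij}(m_im_j)^{-1}\na\theta^{(\tau)}$ are pointwise bounded by a constant times $\theta^{(\tau)}|2\na\sqrt{\rho_j^{(\tau)}}+\sqrt{\rho_j^{(\tau)}}\na\log\theta^{(\tau)}|$ or $|\na\theta^{(\tau)}|$, hence lie in $L^{16/11}\cap L^2$. The boundary contribution is handled by the trace inequality together with $\theta^{(\tau)}\in L^2(H^1)$. The lower-order regularization $\eps(\theta_0+\theta^{(\tau)})(\log\theta^{(\tau)}-\log\theta_0)$ is of order $\eps$ thanks to \eqref{est.theta}. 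The term $\eps\theta^{(\tau)}\mathrm{D}^2\log\theta^{(\tau)}$ belongs to $L^{16/11}$ with factor $\eps^{1/2}$ (combining $\theta^{(\tau)}\in L^{16/3}$ with $\eps^{1/2}\|w^{(\tau)}\|_{L^2(H^2)}\le C$). The critical contribution is the cubic one: using $\|\theta^{(\tau)}\|_{L^{16/3}(\Omega_T)}\le C$ and $\eps^{1/4}\|\na w^{(\tau)}\|_{L^4(\Omega_T)}\le C$ from \eqref{est.w}, a Hölder computation with $3/16+3/4=15/16$ gives
\[
  \bigl\|\eps\,\theta^{(\tau)}|\na w^{(\tau)}|^{2}\na w^{(\tau)}\bigr\|_{L^{16/15}(\Omega_T)}\le \eps\cdot C\cdot\|\na w^{(\tau)}\|_{L^4}^{3}\le C\eps\cdot\eps^{-3/4}=C\eps^{1/4},
\]
so that this term, paired with $\na\phi_0\in L^{16}(\Omega_T)$, contributes $\le C\|\phi_0\|_{L^{16}(0,T;W^{1,16}(\Omega))}$. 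Since $E^{(\tau)}=c_w\rho\,\theta^{(\tau)}$ with $\rho=\sum_i\rho_i^0$ time-independent and bounded below and above by Assumption~(A2), the estimate for $E^{(\tau)}-\sigma_\tau E^{(\tau)}$ transfers to $\theta^{(\tau)}-\sigma_\tau\theta^{(\tau)}$, completing the second bound in \eqref{est.time}.

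The principal obstacle is the integrability exponent for the temperature: one must verify that all nonlinear flux and regularization contributions fit into $L^{16/15}(\Omega_T)$ uniformly in $(\eps,\tau)$. The dictating term is the quartic regularization $\eps\theta|\na w|^2\na w\cdot\na\phi_0$, whose exponent $16/15$ is the exact Hölder balance between the interpolation bound $\theta^{(\tau)}\in L^{16/3}(\Omega_T)$ and the degenerate estimate $\|\na w^{(\tau)}\|_{L^4}\lesssim\eps^{-1/4}$. The factor $\eps$ in front is just enough to cancel the $\eps^{-3/4}$ blow-up, yielding the required $\eps$-independent constant; any weaker integrability estimate on $\theta^{(\tau)}$ or on $\na w^{(\tau)}$ would break this balance.
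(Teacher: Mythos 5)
The paper does not give its own proof of this lemma; it only cites \cite[Lemma~9]{HeJu21}. Your approach---dualizing the discrete weak formulations \eqref{3.weakrho}--\eqref{3.weaktemp} and estimating each spatial term uniformly in $(\eps,\tau)$ against the test-function norm---is the natural one, and the bookkeeping is essentially correct. In particular, the identification of the cubic regularization $\eps\,\theta^{(\tau)}|\na w^{(\tau)}|^2\na w^{(\tau)}$ as the term that dictates the exponent $16/15$, and the H\"older balance $\frac{3}{16}+\frac{3}{4}=\frac{15}{16}$ together with $\eps\|\na w^{(\tau)}\|_{L^4}^3\lesssim\eps^{1/4}$, is precisely what fixes the dual space $W^{2,16}(\Omega)^*$ and the time exponent. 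The bounds for the mass flux (uniformly $L^2$ via Corollary~\ref{coro.flux}, the $L^\infty$ bound on $\sqrt{\rho_i^{(\tau)}}$ and Lemma~\ref{lem.dei}), the heat flux (in $L^{16/11}$ via $\theta^{(\tau)}\in L^{16/3}$ and $\na(\theta^{(\tau)})^2\in L^2$), the cross terms, the boundary term, and the $\eps$-weighted Hessian term (with $\eps^{1/2}$-uniform constant) are all consistent with \eqref{est.rho}--\eqref{est.w}.

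The one step that is not justified is the closing sentence: the claim that boundedness of $\rho$ above and below lets you transfer the estimate from $E^{(\tau)}-\sigma_\tau E^{(\tau)}$ to $\theta^{(\tau)}-\sigma_\tau\theta^{(\tau)}$. That transfer amounts to testing with $\phi_0/\rho$ in place of $\phi_0$, and for $\phi_0/\rho$ to lie in $W^{2,16}(\Omega)$ one needs $\rho^{-1}$ to be a multiplier on $W^{2,16}(\Omega)$---that is, $\rho\in W^{2,\infty}(\Omega)$ or the like---whereas Assumption~(A2) only gives $\rho\in L^\infty(\Omega)$. Boundedness above and below yields equivalence of $L^p$-norms, not of $W^{2,16}(\Omega)^*$-norms. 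Since the paper itself does not write out the argument, I cannot say whether \cite{HeJu21} has a cleaner way around this, but as written this last step of your proposal has a genuine gap: one either needs extra spatial regularity of the initial total density, or a version of the discrete Aubin--Lions lemma that tolerates a fixed $L^\infty$ weight on the time-difference term.
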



\subsection{The limit $(\eps,\tau)$}

The bounds \eqref{est.theta}, \eqref{est.rho2}, and \eqref{est.time} 
allow us to apply the Aubin--Lions lemma in the version of \cite{DrJu12}. There exist
subsequences, which are not relabeled, such that as $(\eps,\tau)\to 0$,
$$
  \rho_i^{(\tau)}\to\rho_i, \quad \theta^{(\tau)}\to\theta\quad\mbox{strongly in }
	L^2(\Omega_T),\ i=1,\ldots,n-1.
$$
The convergence also holds for $i=n$ since $\rho_n^{(\tau)}=1-\sum_{i=1}^{n-1}\rho_i^{(\tau)}$.
Thanks to the $L^\infty(\Omega_T)$ bound for
$\rho_i^{(\tau)}$ and the $L^{16/3}(\Omega_T)$ bound for $\theta^{(\tau)}$, we have
\begin{align*}
  \rho_i^{(\tau)}\to\rho_i &\quad\mbox{strongly in }L^r(\Omega_T)\mbox{ for all }
	r<\infty, \\
	\quad\theta^{(\tau)}\to\theta &\quad\mbox{strongly in }L^r(\Omega_T)
	\mbox{ for all }r<16/3.
\end{align*}

We claim that $\rho_i>0$ and $\theta>0$ a.e.\ in $\Omega_T$.
The positivity of $\rho_i$ is proved as in \cite[p.~16]{HeJu21}.
The strong convergence of $(\theta^{(\tau)})$ implies a.e.\ convergence and in particular
$\log\theta^{(\tau)}\to Z$ a.e. Thus, $\theta^{(\tau)}\to \exp(Z)$ a.e.
We conclude that $\theta=\exp(Z)>0$ a.e.\ in $\Omega_T$. 

It follows that $\log\theta\in L^2(\Omega_T)$ and estimate \eqref{est.theta} yields
\begin{equation}\label{4.nalog}
  \na\log\theta^{(\tau)}\rightharpoonup\na\log\theta\quad\mbox{weakly in }L^2(\Omega_T).
\end{equation}
Furthermore, in view of \eqref{est.theta}, \eqref{est.rho2}, and \eqref{est.time},
up to subsequences,
\begin{align*}
  \rho_i^{(\tau)}\rightharpoonup\rho_i, \quad \theta^{(\tau)}\rightharpoonup\theta
	&\quad\mbox{weakly in }L^2(0,T;H^1(\Omega)), \\
	\tau^{-1}(\rho_i^{(\tau)}-\sigma_\tau\rho_i^{(\tau)})\rightharpoonup\pa_t\rho_i
	&\quad\mbox{weakly in }L^2(0,T;H^2(\Omega)^*), \\
	\tau^{-1}(\theta^{(\tau)}-\sigma_\tau\theta^{(\tau)})\rightharpoonup\pa_t\rho_i
	&\quad\mbox{weakly in }L^{16/15}(0,T;W^{2,16}(\Omega)^*),
\end{align*}
and the bounds \eqref{est.w} show that
$$
  \eps\log\theta^{(\tau)}\to 0, \quad \eps w_i^{(\tau)}\to 0\quad\mbox{strongly in }
	L^2(0,T;H^2(\Omega)).
$$
The embedding $H^1(\Omega)\hookrightarrow L^2(\pa\Omega)$ is compact, giving
$\theta^{(\tau)}\to\theta$ strongly in $L^2(0,T;L^2(\pa\Omega))$.

These convergences are sufficient to pass to the limit $(\eps,\tau)\to 0$ in
\eqref{3.weakrho}--\eqref{3.weaktemp}, showing that $(\bm\rho,\theta)$ solves
the weak formulation \eqref{3.mass}--\eqref{3.energy}. We only detail the limits
in the terms $A_{ij}^{(\tau)}=A_{ij}(\bm{w}^{(\tau)})$ and 
$B_i^{(\tau)}=B_i(\bm{w}^{(\tau)})$. 
We know that $\na(\rho_i^{(\tau)})^{1/2}\rightharpoonup
\na\rho_i^{1/2}$ weakly in $L^2(\Omega_T)$ and
$$
  \frac{A_{ij}^{(\tau)}}{m_j(\rho_j^{(\tau)})^{1/2}}
	= M_{ij}^{BD}(\bm{\rho}^{(\tau)})\frac{(\rho_i^{(\tau)})^{1/2}}{m_j}
	\to M_{ij}^{BD}(\bm\rho)\frac{\rho_i^{1/2}}{m_j}
	= \frac{A_{ij}}{m_j\rho_j^{1/2}}
$$
strongly in $L^\gamma(\Omega_T)$ for all $\gamma<\infty$. Using \eqref{3.relA}
and \eqref{3.qi}, this implies that
\begin{align*}
  \sum_{j=1}^{n-1}A_{ij}^{(\tau)}\na w_j^{(\tau)}
	&= \sum_{j=1}^n \frac{A_{ij}^{(\tau)}}{m_j}\na\log\frac{\rho_i^{(\tau)}}{m_j}
	= 2\sum_{j=1}^{n}M^{BD}_{ij}(\bm{\rho}^{(\tau)})\frac{(\rho_i^{(\tau)})^{1/2}}{m_j}
	\na(\rho_j^{(\tau)})^{1/2} \\
	&\rightharpoonup
	2\sum_{j=1}^{n}M^{BD}_{ij}\frac{\rho_i^{1/2}}{m_j}\na\rho_j^{1/2}
	\quad\mbox{weakly in }L^s(\Omega_T),\ s<2.
\end{align*}
Since the sequence is bounded in $L^2(\Omega_T)$,
this convergence also holds in this space. Similarly, 
\begin{align*}
  & B_i^{(\tau)}e^{-w^{(\tau)}}\na w^{(\tau)}
	= \sum_{j=1}^n \frac{A_{ij}^{(\tau)}}{m_j}\na\log\theta^{(\tau)}
	\rightharpoonup \sum_{j=1}^n \frac{A_{ij}}{m_j}\na\log\theta
	\quad\mbox{weakly in }L^2(\Omega_T), \\
	& A_{ij}^{(\tau)}\na\theta^{(\tau)} = M^{BD}_{ij}(\bm{\rho}^{(\tau)})
	(\rho_i^{(\tau)}\rho_j^{(\tau)})^{1/2}\na\theta^{(\tau)}
	\rightharpoonup A_{ij}\na\theta\quad\mbox{weakly in }L^2(\Omega_T),
\end{align*}
and using $\theta^{(\tau)}\to \theta$ strongly in $L^r(\Omega_T)$ for $r<16/3$,
$$
  \sum_{i=1}^{n-1}B_i^{(\tau)}\na w_i^{(\tau)}
	= 2\sum_{i,j=1}^n\frac{M^{BD}_{ij}(\bm{\rho}^{(\tau)})}{m_im_j}
	\theta^{(\tau)}(\rho_j^{(\tau)})^{1/2}\na(\rho_i^{(\tau)})^{1/2}
	\rightharpoonup 2\sum_{i=1}^n \frac{B_i}{m_i\rho_i^{1/2}}\na\rho_i^{1/2}
$$
weakly in $L^s(\Omega_T)$ for $s<16/11$, and since the right-hand side lies in
$L^{16/11}(\Omega_T)$, this convergence also holds in $L^{16/11}(\Omega_T)$.

Next, we claim that $\rho_i(0)$ and $\theta(0)$ satisfy the
initial data. The time derivative of the linear interpolant
$$
  \widetilde\rho_i^{(\tau)}(t) = \rho_i^k - \frac{k\tau-t}{\tau}(\rho_i^k-\rho_i^{k-1})
	\quad\mbox{for }(k-1)\tau<t<k\tau
$$
is bounded since, because of \eqref{est.time},
$$
  \|\pa_t\widetilde\rho_i^{(\tau)}\|_{L^2(0,T;H^2(\Omega)^*)}
	\le \tau^{-1}\|\rho_i^{(\tau)}-\sigma_\tau\rho_i^{(\tau)}\|_{L^2(0,T;H^2(\Omega)^*)}
	\le C.
$$
Thus, $\widetilde\rho_i^{(\tau)}$ is uniformly bounded in
$H^1(0,T;H^2(\Omega)^*)\hookrightarrow C^0([0,T];H^2(\Omega)^*)$ and we conclude
for a subsequence that $\rho_i^0=\widetilde\rho_i^{(\tau)}(0)\rightharpoonup r_i$
weakly in $H^2(\Omega)^*$ for some $r_i\in H^2(\Omega)^*$. 
It follows that $r_i=\rho_i^0$. As $\widetilde\rho_i^{(\tau)}$
and $\rho_i^{(\tau)}$ converge to the same limit,
$$
  \|\widetilde\rho_i^{(\tau)}-\rho_i^{(\tau)}\|_{L^2(0,T;H^2(\Omega)^*)}
	\le \|\rho_i^{(\tau)}-\sigma_\tau\rho_i^{(\tau)}\|_{L^2(0,T;H^2(\Omega)^*)}
	\le C\tau\to 0,
$$
this shows that $\rho_i^0=r_i=\rho_i(0)$ in $H^2(\Omega)^*$. In an analogous way,
we verify that $\theta(0)=\theta^0$ in $W^{2,16}(\Omega)^*$. 

The initial data are satisfied in better spaces. Indeed, going back to 
\eqref{3.mass}--\eqref{3.energy}, the regularity of $\rho_i$ implies that
$\pa_t\rho_i\in L^2(0,T;H^1(\Omega))\cap H^1(0,T;H^1(\Omega)^*)\hookrightarrow
C^0([0,T];L^2(\Omega))$ and thus $\rho_i(0)=\rho_i^0$ in the sense of $L^2(\Omega)$.
The temperature satisfies $\theta\in L^\infty(0,T;L^2(\Omega))\cap 
C^0([0,T];W^{2,16}(\Omega)^*)$, which gives $\theta\in C_w^0([0,T];L^2(\Omega))$.
Consequently, $\theta(0)=\theta^0$ weakly in $L^2(\Omega)$.
Moreover, we deduce from $|\kappa\na\theta|\le C_\kappa(|\na\theta|+\theta|\na\theta^2|)
\in L^{16/11}(\Omega_T)$ that $\pa_t\theta \in L^{16/11}(0,T;W^{1,16/11}(\Omega)^*)$.
This completes the proof.


\section{Proof of Theorem \ref{thm.wsu}}\label{sec.wsu}

Let $(\bm\rho,\theta)$ be a weak solution and $(\bar{\bm\rho},\bar\theta)$ be a strong
solution to \eqref{1.mass}--\eqref{1.constit}. We introduce the entropy
$$
  H(\bm\rho(t),\theta(t)) = \int_\Omega\bigg(\sum_{i=1}^{n}\frac{\rho_i}{m_i}
	\bigg(\log\frac{\rho_i}{m_i}-1\bigg)
	- c_w\rho\log\theta\bigg)\dx.
$$

\begin{lemma}[Entropy equality for strong solutions]\label{lem.eis}
Let $(\bar{\bm{\rho}},\bar\theta)$ be a strong solution to \eqref{1.mass}--\eqref{1.constit}
(in the sense mentioned after Theorem \ref{thm.wsu}) with $\lambda=0$. Then
$$
  H(\bar{\bm{\rho}}(t),\bar\theta(t)) 
	+ \int_0^t\int_\Omega\frac{\kappa(\bar\theta)}{\bar\theta^2}|\na\bar\theta|^2\dx\ds
	+ \frac12\int_0^t\int_\Omega\sum_{i,j=1}^n b_{ij}\bar\rho_i\bar\rho_j
	|\bar{u}_i-\bar{u}_j|^2\dx\ds
	= H(\bar{\bm{\rho}}(0),\bar\theta(0)).
$$
\end{lemma}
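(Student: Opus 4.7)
The plan is to differentiate $H(\bar{\bm\rho}(t),\bar\theta(t))$ in time, substitute the strong forms of the mass and energy balances, integrate by parts using the no-flux boundary conditions (we have $\bar J_i\cdot\nu=0$ and, since $\lambda=0$, $\bar J_e\cdot\nu=0$), and finally use the Maxwell--Stefan relations \eqref{1.velo} to rewrite the diffusive contribution as the nonnegative quadratic form in $(\bar u_i-\bar u_j)$ that appears on the right-hand side. Throughout, I drop the bars for readability.

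First I would compute
$$
  \partial_t h = \sum_{i=1}^n\frac{1}{m_i}\log\frac{\rho_i}{m_i}\,\partial_t\rho_i - c_w\log\theta\sum_{i=1}^n\partial_t\rho_i - \frac{c_w\rho}{\theta}\partial_t\theta.
$$
Because $\sum_i\rho_iu_i=0$ gives $\partial_t\rho=\sum_i\partial_t\rho_i=0$, the middle term vanishes. For the remaining terms I would use $\partial_t\rho_i=-\diver J_i$ and $c_w\rho\,\partial_t\theta=\partial_t(\rho e)=-\diver J_e$, integrate over $\Omega$, and integrate by parts to obtain
$$
  \frac{\mathrm d}{\dt}H = \sum_{i=1}^n\int_\Omega \frac{J_i\cdot\na\rho_i}{m_i\rho_i}\dx + \int_\Omega \frac{J_e\cdot\na\theta}{\theta^2}\dx.
$$
Next, using $J_i=\rho_i u_i$ and the identity $\na\rho_i/m_i=d_i/\theta-\rho_i\na\theta/(m_i\theta)$ coming from \eqref{2.di}, I would split the first sum into $\sum_i u_i\cdot d_i/\theta$ and a cross-term $-\sum_i(\rho_i u_i/m_i)\cdot\na\theta/\theta$. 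The constraint $J_e=-\kappa\na\theta+\theta\sum_i\rho_iu_i/m_i$ from \eqref{1.Je} lets me rewrite this cross-term as $-(J_e+\kappa\na\theta)\cdot\na\theta/\theta^2$, which cancels the heat-flux term and leaves behind precisely $-\int_\Omega\kappa|\na\theta|^2/\theta^2\dx$.

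At this stage I would invoke the Maxwell--Stefan law $d_i=-\theta\sum_j b_{ij}\rho_i\rho_j(u_i-u_j)$ to compute
$$
  \sum_{i=1}^n u_i\cdot d_i = -\theta\sum_{i,j=1}^n b_{ij}\rho_i\rho_j\,u_i\cdot(u_i-u_j) = -\frac{\theta}{2}\sum_{i,j=1}^n b_{ij}\rho_i\rho_j|u_i-u_j|^2,
$$
where the second equality uses the symmetry $b_{ij}=b_{ji}$ and relabeling. Substituting back yields
$$
  \frac{\mathrm d}{\dt}H(\bm\rho,\theta) + \int_\Omega\frac{\kappa(\theta)}{\theta^2}|\na\theta|^2\dx + \frac12\int_\Omega\sum_{i,j=1}^n b_{ij}\rho_i\rho_j|u_i-u_j|^2\dx = 0,
$$
and integration over $(0,t)$ gives the claim.

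I do not expect serious obstacles: the main bookkeeping step is handling the coupled cross-term between the diffusive and thermal fluxes so that the $J_e\cdot\na\theta/\theta^2$ contributions cancel and leave the clean Fourier dissipation $\kappa|\na\theta|^2/\theta^2$. The other point that must be used is the Gibbs-type identity encoded in $\partial_t\rho=0$, which is what allows the $c_w\log\theta$ factor to drop out so that the computation produces exactly $q_i$-style combinations multiplying $\partial_t\rho_i$. All manipulations are legitimate for a strong solution of the assumed regularity, since each integrand is then integrable and the boundary integrals vanish by $\lambda=0$ and $J_i\cdot\nu=0$.
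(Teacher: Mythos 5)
Your proof is correct and follows essentially the same route as the paper: differentiate $H$ in time, use $\partial_t\rho=0$ to drop the $c_w\log\theta$ term, integrate by parts with the no-flux boundary conditions, regroup so the $J_e\cdot\na\theta/\theta^2$ cross-term cancels against the diffusive contribution leaving $-\kappa|\na\theta|^2/\theta^2$, and finally symmetrize $\sum_i u_i\cdot d_i/\theta$ via the Maxwell--Stefan relations and $b_{ij}=b_{ji}$ to obtain the quadratic dissipation. The only cosmetic difference is that the paper writes the diffusive term directly as $\sum_i (\bar u_i/m_i)\cdot(\na\bar\rho_i+\bar\rho_i\na\log\bar\theta)=\sum_i \bar u_i\cdot\bar d_i/\bar\theta$ rather than introducing and cancelling the cross-term explicitly, but the algebra is identical.
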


\begin{proof}
We use \eqref{1.mass} and \eqref{1.energy} and integrate by parts to obtain
\begin{align*}
  \frac{\mathrm{d}H}{\dt} &= \int_\Omega\bigg(\sum_{i=1}^n
	\frac{\pa_t\bar\rho_i}{m_i}\log\frac{\bar\rho_i}{m_i} 
	- \frac{c_w}{\rho}\pa_t(\rho\bar\theta)\bigg)\dx \\
	&= \int_\Omega\bigg\{\sum_{i=1}^n\frac{\bar\rho_i\bar{u}_i}{m_i}\na\log\frac{\bar\rho_i}{m_i}
	+ \frac{\na\bar\theta}{\bar\theta^2}\bigg({-\bar\kappa\na\bar\theta} 
	+ \bar\theta\sum_{i=1}^n\frac{\bar\rho_i\bar{u}_i}{m_i}\bigg)\bigg\}dx \\
	&= -\int_\Omega\frac{\bar\kappa}{\bar\theta^2}|\na\bar\theta|^2\dx
	+ \int_\Omega\sum_{i=1}^n\frac{\bar{u}_i}{m_i}
	\cdot(\na\bar\rho_i+\bar\rho_i\na\log\bar\theta)\dx \\
	&= -\int_\Omega\frac{\bar\kappa}{\bar\theta^2}|\na\bar\theta|^2\dx
	+ \int_\Omega\sum_{i=1}^n\frac{1}{\bar\theta}\bar{u}_i\cdot \bar{d}_i\dx,
\end{align*}
where $\bar\kappa=\kappa(\bar\theta)$ and we used \eqref{2.di} in the last step. 
By the algebraic system \eqref{1.velo} and the symmetry of $(b_{ij})$,
\begin{equation}\label{5.uidi}
  \sum_{i=1}^n\frac{1}{\bar{\theta}}\bar{u}_i\cdot \bar{d}_i
	= -\sum_{i,j=1}^n b_{ij}\bar\rho_i\bar\rho_j(\bar{u}_i-\bar{u}_j)\cdot\bar{u}_i
	= -\frac12\sum_{i,j=1}^n b_{ij}\bar\rho_i\bar\rho_j|\bar{u}_i-\bar{u}_j|^2.
\end{equation}
This shows the claim.
\end{proof}

\begin{lemma}[Entropy inequality for weak solutions]\label{lem.eiw}
Let $(\bar{\bm{\rho}},\bar\theta)$ be a weak solution to \eqref{1.mass}--\eqref{1.constit}
with $\lambda=0$. Then
$$
  H(\bm\rho(t),\theta(t)) + \int_0^t\int_\Omega\frac{\kappa}{\theta^2}
	|\na\theta|^2\dx\ds
	+ \frac12\int_0^t\int_\Omega\sum_{i,j=1}^n 
	b_{ij}\rho_i\rho_j|u_i-u_j|^2\dx\ds	\le H(\bm\rho^0,\theta^0).
$$
\end{lemma}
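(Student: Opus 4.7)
The plan is to obtain the stated inequality by passing to the limit in a discrete entropy inequality along the approximation scheme of Section \ref{sec.ex}, specialized to $\lambda=0$. A key observation is that when $\lambda=0$, the total energy $c_w\int_\Omega\rho\theta\,\dx$ is conserved, so the $Ee^{-w_0}$ contributions that appear on both sides of \eqref{3.unif.rho} cancel, leaving an inequality purely in terms of $h$.

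First, I would revisit Section \ref{sec.LM} with $\sigma=1$, keeping the test functions $\phi_i = w_i^k$ and $\phi_0 = e^{-w_0}-e^{-w^k}$. The convexity bound $I_1+I_2 \ge \int_\Omega(h+Ee^{-w_0})(\bm\rho^k,\theta^k)\,\dx - \int_\Omega(h+Ee^{-w_0})(\bm\rho^{k-1},\theta^{k-1})\,\dx$ together with the nonnegativity of $I_9, I_{10}, I_{11}$ and the identification \eqref{3.I458} yields, after telescoping in $k$ from $1$ to $\lceil t/\tau\rceil$ and invoking energy conservation to cancel the $Ee^{-w_0}$ terms, the discrete inequality
\begin{align*}
  \int_\Omega h(\bm\rho^{(\tau)}(t),\theta^{(\tau)}(t))\,\dx
  &+ \int_0^t\!\!\int_\Omega \kappa(\theta^{(\tau)})\,|\na\log\theta^{(\tau)}|^2\,\dx\,\ds \\
  &+ \int_0^t\!\!\int_\Omega \sum_{i,j=1}^n A_{ij}(\bm\rho^{(\tau)})\,\na\Big(q_i^{(\tau)}+\tfrac{w^{(\tau)}}{m_i}\Big)\cdot\na\Big(q_j^{(\tau)}+\tfrac{w^{(\tau)}}{m_j}\Big)\,\dx\,\ds \\
  &\le \int_\Omega h(\bm\rho^0,\theta^0)\,\dx.
\end{align*}

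Next I would pass to the limit $(\eps,\tau)\to 0$ term by term using weak lower semicontinuity. For the entropy at time $t$, the convexity of $h$ (positive-definite Hessian) together with the pointwise-in-$t$ weak $L^2$ convergence $(\bm\rho^{(\tau)}(t),\theta^{(\tau)}(t))\rightharpoonup(\bm\rho(t),\theta(t))$, afforded by $\bm\rho\in C^0([0,T];L^2(\Omega))$ and $\theta\in C^0_w([0,T];L^2(\Omega))$ from the end of Section \ref{sec.ex}, yields $\int h(\bm\rho(t),\theta(t))\,\dx \le \liminf \int h(\bm\rho^{(\tau)}(t),\theta^{(\tau)}(t))\,\dx$. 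For the thermal dissipation, I rewrite $\kappa|\na\log\theta|^2 = (\kappa/\theta^2)|\na\theta|^2$ and combine the a.e.\ convergence of $\kappa(\theta^{(\tau)})/(\theta^{(\tau)})^2$ (continuity, $\theta>0$ a.e., and the $L^{16/3}(\Omega_T)$ bound) with $\na\theta^{(\tau)}\rightharpoonup\na\theta$ weakly in $L^2$, using the classical lower semicontinuity of $v\mapsto\int a(x)|v|^2\,\dx$ with $a\ge 0$. For the Maxwell--Stefan dissipation, I invoke the identity established in the proof of Lemma \ref{lem.posdef} (where the pressure constraint \eqref{1.pressure} turns \eqref{3.posdef} into an equality) to rewrite it as
\[
  \sum_{i,j=1}^n M_{ij}^{BD}(\bm\rho^{(\tau)})\,\frac{1}{m_im_j}\bigl(2\na\sqrt{\rho_i^{(\tau)}}+\sqrt{\rho_i^{(\tau)}}\na\log\theta^{(\tau)}\bigr)\cdot\bigl(2\na\sqrt{\rho_j^{(\tau)}}+\sqrt{\rho_j^{(\tau)}}\na\log\theta^{(\tau)}\bigr);
\]
since $M_{ij}^{BD}(\bm\rho^{(\tau)})\to M_{ij}^{BD}(\bm\rho)$ strongly in every $L^r(\Omega_T)$ and the bracketed vectors converge weakly in $L^2(\Omega_T)$, lower semicontinuity of the PSD quadratic form transfers to the limit.

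To identify the limiting Maxwell--Stefan term with $\tfrac12\sum_{i,j=1}^n b_{ij}\rho_i\rho_j|u_i-u_j|^2$, I apply the algebraic identity used in Lemma \ref{lem.eis}: from $d_i=\na(\rho_i\theta)/m_i$ one has $d_i/(\theta\sqrt{\rho_i})=m_i^{-1}(2\na\sqrt{\rho_i}+\sqrt{\rho_i}\na\log\theta)$, so the above expression equals $\sum_{ij}M_{ij}^{BD}\,(d_i/(\theta\sqrt{\rho_i}))\cdot(d_j/(\theta\sqrt{\rho_j})) = -\sum_i u_i\cdot d_i/\theta$, which by \eqref{5.uidi} is exactly $\tfrac12\sum_{ij}b_{ij}\rho_i\rho_j|u_i-u_j|^2$. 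The main obstacle I anticipate is the pointwise-in-$t$ lower semicontinuity of $\int h\,\dx$, since the $-\log\theta$ singularity prevents Fatou from being applied naively; however, the weak $L^2(\Omega)$ convergence at each $t$ together with the convex $[0,+\infty]$-valued extension of $h$ suffices. A secondary technicality is the strong--weak pairing inside the Maxwell--Stefan quadratic form whose coefficient matrix depends nonlinearly on the strongly converging densities, which is standard but must be carried out in the correct integrability exponent.
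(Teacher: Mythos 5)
Your proposal follows the same route as the paper: start from the discrete entropy inequality \eqref{3.dei}, identify the Maxwell--Stefan dissipation via Corollary \ref{coro.flux}, relation \eqref{3.ui} and identity \eqref{5.uidi}, telescope in $k$, and pass to the limit $(\eps,\tau)\to 0$ using weak lower semicontinuity. The only structural difference is cosmetic: the paper converts the discrete dissipation to $\tfrac12\sum_{i,j}b_{ij}\rho_i^{(\tau)}\rho_j^{(\tau)}|u_i^{(\tau)}-u_j^{(\tau)}|^2$ \emph{before} the limit (exhibiting $(b_{ij}\rho_i^{(\tau)}\rho_j^{(\tau)})^{1/2}u_i^{(\tau)}\rightharpoonup(b_{ij}\rho_i\rho_j)^{1/2}u_i$ in $L^2$ and using lower semicontinuity of the $L^2$ norm), whereas you keep the positive semidefinite quadratic form $\sum_{ij}M_{ij}^{BD}(\cdot)(\cdot)$ under the limit and identify afterward; both are correct.

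One caution on the point you correctly flag as "a key observation": the statement that the total energy $c_w\int_\Omega\rho\theta\,\dx$ is conserved and so the $Ee^{-w_0}$ terms cancel is only true for the limit solution, not at the approximate level. Taking $\phi_0\equiv1$ in \eqref{3.approx2} with $\lambda=0$ gives
$$\frac{1}{\tau}\int_\Omega(E^k-E^{k-1})\,\dx = -\eps\int_\Omega(e^{w_0}+e^{w^k})(w^k-w_0)\,\dx,$$
which has no definite sign. Thus you cannot telescope to an inequality with only $h$ on both sides; you must carry the $Ee^{-w_0}$ terms through the limit, use the strong $L^2(\Omega_T)$ convergence of $\theta^{(\tau)}$ to obtain $\int_\Omega E^{(\tau)}(t)\,\dx\to\int_\Omega E(t)\,\dx$ for a.e.\ $t$ (along a subsequence), and only then invoke energy conservation of the limit problem to cancel $\int E(t)e^{-w_0}\dx = \int E^0 e^{-w_0}\dx$. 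The paper's write-up also elides this point, simply dropping the $Ee^{-w_0}$ contributions when citing \eqref{3.dei}, so your version is no less precise; but make the cancellation happen after, not before, the limit.
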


\begin{proof}
Let $(\bm\rho^k,\theta^k)$ for $k=1,\ldots,N$ be a solution to the approximate problem
\eqref{3.approx1}--\eqref{3.approx2}, constructed in Section \ref{sec.LM}. 
According to \eqref{3.dei}, this solution satisfies 
\begin{align*}
  H(\bm\rho^k,\theta^k) &+ \tau\int_\Omega\kappa(\theta^k)|\nabla\log\theta^k|^2\dx \\
	& + \tau\int_\Omega\sum_{i,j=1}^n A_{ij}^k
	\na\bigg(q_i^k+\frac{w^k}{m_i}\bigg)\cdot\na\bigg(q_j^k+\frac{w^k}{m_j}\bigg)\dx
	\le H(\bm\rho^{k-1},\theta^{k-1}),
\end{align*}
where the superindex $k$ denotes the $k$th time step. By Corollary \ref{coro.flux}
as well as relations \eqref{3.ui} and \eqref{5.uidi},
\begin{align*}
  \sum_{i,j=1}^n A_{ij}^k &
	\na\bigg(q_i^k+\frac{w^k}{m_i}\bigg)\cdot\na\bigg(q_j^k+\frac{w^k}{m_j}\bigg)
	= \sum_{i,j=1}^n (M_{ij}^{BD})^k
	\frac{d_i^k}{\theta^k(\rho_i^k)^{1/2}}\cdot\frac{d_j^k}{\theta^k(\rho_j^k)^{1/2}} \\
	&= -\sum_{i=1}^n\frac{1}{\theta^k}d_i^k\cdot u_i^k
	= \frac12\sum_{i,j=1}^n b_{ij}\rho_i^k\rho_j^k|u_i^k-u_j^k|^2.
\end{align*}
Therefore,
$$
  H(\bm\rho^k,\theta^k) + \tau\int_\Omega\kappa(\theta^k)|\nabla\log\theta^k|^2\dx
	+ \frac{\tau}{2}\int_\Omega\sum_{i,j=1}^n b_{ij}\rho_i^k\rho_j^k|u_i^k-u_j^k|^2\dx
	\le H(\bm\rho^{k-1},\theta^{k-1}).
$$
We sum over $k=1,\ldots,j$ with $t\in((j-1)\tau,j\tau]$ 
and use the notation of Section \ref{sec.unif}:
\begin{align}\label{5.dei}
  H(\bm\rho^{(\tau)}(t),\theta^{(\tau)}(t)) 
	&+ \int_0^t\int_\Omega\kappa(\theta^{(\tau)})|\na\log\theta^{(\tau)}|^2\dx\ds \\
	&{}+ \frac12\int_0^t\int_\Omega\sum_{i,j=1}^n 
	b_{ij}\rho_i^{(\tau)}\rho_j^{(\tau)}|u_i^{(\tau)}-u_j^{(\tau)}|^2\dx\ds
	\le H(\bm\rho^0,\theta^0) \nonumber
\end{align}
for a.e.\ $t\in(0,T)$. 

It remains to pass to the limit $(\eps,\tau)\to 0$ in \eqref{5.dei}. 
We deduce from the strong convergence
of $(\bm\rho^{(\tau)})$ and $(\theta^{(\tau)})$ that
$$
  H(\bm\rho(t),\theta(t)) 
	\le \liminf_{(\eps,\tau)\to 0}H(\bm\rho^{(\tau)}(t),\theta^{(\tau)}(t)).
$$
We deduce from the strong convergence $\rho_i^{(\tau)}\to\rho_i$ in $L^q(\Omega_T)$ for any 
$q<\infty$ and the boundedness of $M_{ij}^{BD}$ that 
$M_{ij}^{BD}(\bm\rho^{(\tau)})\to M_{ij}^{BD}(\bm\rho)$ strongly in any $L^q(\Omega_T)$.
In view of the weak convergences $\na\log\theta^{(\tau)}\rightharpoonup\na\log\theta$
from \eqref{4.nalog} and $\na(\rho_i^{(\tau)})^{1/2}\rightharpoonup\na\rho_i^{1/2}$
from \eqref{est.rho2} weakly in $L^2(\Omega_T)$, we have
$$
  2\na(\rho_i^{(\tau)})^{1/2} + \rho_i^{(\tau)}\na\log\theta^{(\tau)}
	\rightharpoonup 2\na\rho_i^{1/2} + \rho_i\na\log\theta\quad\mbox{weakly in }L^2(\Omega_T).
$$
Hence, using \eqref{3.ui},
\begin{align*}
  (\rho_i^{(\tau)})^{1/2}u_i^{(\tau)}
	&= \sum_{j=1}^n M_{ij}^{BD}(\bm\rho^{(\tau)})\frac{1}{m_j}
	\big(2\na(\rho_i^{(\tau)})^{1/2} + \rho_i^{(\tau)}\na\log\theta^{(\tau)}\big) \\
	&\rightharpoonup \sum_{j=1}^n M_{ij}^{BD}(\bm\rho)\frac{1}{m_j}
	\big(2\na\rho_i^{1/2} + \rho_i\na\log\theta\big) = \rho_i^{1/2}u_i.
\end{align*}
weakly in $L^2(\Omega_T)$, where the last identity is the definition of $u_i$.
Then, taking into account the boundedness of $\rho_i^{(\tau)}$ in $L^\infty(\Omega_T)$,
for any $i,j=1,\ldots,n$,
$$
  (b_{ij}\rho_i^{(\tau)}\rho_j^{(\tau)})^{1/2}u_i^{(\tau)}
	\rightharpoonup (b_{ij}\rho_i\rho_j)^{1/2}u_i\quad\mbox{weakly in }L^2(\Omega_T).
$$ 
As the $L^2(\Omega_T)$ norm is weakly lower semicontinuous,
\begin{align*}
  \int_0^T\int_\Omega\sum_{i,j=1}^n b_{ij}\rho_i\rho_j|u_i-u_j|^2\dx\ds
	&\le \liminf_{(\eps,\tau)\to 0}\int_0^T\int_\Omega\sum_{i,j=1}^n
	\big|(b_{ij}\rho_i^{(\tau)}\rho_j^{(\tau)})^{1/2}(u_i^{(\tau)}-u_j^{(\tau)})\big|^2\dx\ds \\
	&= \liminf_{(\eps,\tau)\to 0}\int_0^T\int_\Omega\sum_{i,j=1}^n
	b_{ij}\rho_i^{(\tau)}\rho_j^{(\tau)}|u_i^{(\tau)}-u_j^{(\tau)}|^2\dx\ds.
\end{align*}
Finally, $\kappa(\theta^{(\tau)})^{1/2}\na\log\theta^{(\tau)}
\to\kappa(\theta)^{1/2}\na\log\theta$ weakly in $L^1(\Omega_T)$ and, because of
the uniform bounds, also in $L^2(\Omega_T)$. Hence,
$$
  \int_0^t\int_\Omega\frac{\kappa(\theta)}{\theta^2}|\na\theta|^2\dx\ds
	\le\liminf_{(\eps,\tau)\to 0}\int_0^t\int_\Omega
	\frac{\kappa(\theta^{(\tau)})}{(\theta^{(\tau)})^2}|\na\theta^{(\tau)}|^2\dx\ds.
$$
Thus, applying the limit inferior $(\eps,\tau)\to 0$ to both sides of \eqref{5.dei}
yields the result.
\end{proof}

\begin{lemma}[Relative entropy inequality]\label{lem.rei}
Let the assumptions of Theorem \ref{thm.wsu} hold and let $\rho_i(0)=\bar\rho_i(0)$ 
for $i=1,\ldots,n$ and $\theta(0)=\bar\theta(0)$. Then
\begin{align}\label{5.rei}
  H(&(\bm\rho,\theta)(t)|(\bar{\bm\rho},\bar\theta)(t))
	+ \frac{\mu_M}{2}\int_0^t\int_\Omega\sum_{i=1}^n\rho_i|u_i-\bar{u}_i|^2\dx\ds \\
	&{}+ \frac{c_\kappa}{2}\int_0^t\int_\Omega|\na(\log\theta-\log\bar\theta)|^2\dx\ds
	\le C\int_0^t\int_\Omega\bigg(\sum_{j=1}^n(\rho_j-\bar\rho_j)^2 + (\theta-\bar\theta)^2
	\bigg)\dx\ds, \nonumber
\end{align}
where the relative entropy $H(\bm\rho,\theta|\bar{\bm\rho},\bar\theta)$ 
is defined in \eqref{1.re}.
\end{lemma}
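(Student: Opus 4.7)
The plan is to compute $(d/dt)H(\bm\rho,\theta|\bar{\bm\rho},\bar\theta)$ by combining three ingredients: the entropy inequality for the weak solution (Lemma \ref{lem.eiw}), the entropy equality for the strong solution (Lemma \ref{lem.eis}), and the time derivatives of the linear-tangent cross terms in definition \eqref{1.re}. Subtracting the two entropy (in)equalities and using that the initial entropies agree produces
$$
  H(\bm\rho,\theta)(t) - H(\bar{\bm\rho},\bar\theta)(t) + (\text{dissipations of weak sol.}) - (\text{dissipations of strong sol.}) \le 0.
$$
For the cross contributions $-\int_\Omega\sum_i \bar w_i(\rho_i - \bar\rho_i)\dx + \int_\Omega(E - \bar E)/\bar\theta\,\dx$, I would test the weak formulations \eqref{3.mass}--\eqref{3.energy} for $(\bm\rho,\theta)$ with the admissible test functions $\phi_i = \bar w_i$ and $\phi_0 = -1/\bar\theta$, while computing $\partial_t\bar w_i$ and $\partial_t(1/\bar\theta)$ directly from the strong equations satisfied by $(\bar{\bm\rho},\bar\theta)$. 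The hypotheses $\bar u_i,\,|\na\log\bar\theta|\in L^\infty(\Omega_T)$ and $\bar\theta\ge m>0$ make these test functions and their time derivatives admissible.

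After collecting terms, the combined identity should display, on the left-hand side, a genuine \emph{relative} dissipation of the form
$$
  \int_0^t\!\!\int_\Omega\sum_{i,j=1}^n A_{ij}(\bm\rho,\theta)\na\bigg((q_i-\bar q_i) + \frac{w-\bar w}{m_i}\bigg)\cdot\na\bigg((q_j-\bar q_j) + \frac{w-\bar w}{m_j}\bigg)\dx\ds
$$
plus $\int_0^t\!\int_\Omega \kappa(\theta)|\na(\log\theta-\log\bar\theta)|^2\dx\ds$. The first expression is nonnegative by the positive semidefiniteness of $M^{BD}$ on the subspace $L$ (see \eqref{3.semidefQ}--\eqref{3.semidef}) and, via the same algebraic manipulation used in the proof of Lemma \ref{lem.eiw} (i.e.\ rewriting $A_{ij}\na(q_i+w/m_i)$ as $M^{BD}_{ij}\sqrt{\rho_i}(u_i-\bar u_i)$ after accounting for the pressure constraint \eqref{1.pressure}), dominates $\mu_M\sum_i \rho_i|u_i-\bar u_i|^2$. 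The heat term is bounded below by $c_\kappa|\na(\log\theta-\log\bar\theta)|^2$ thanks to Assumption (A4). These are precisely the two good dissipation terms required in \eqref{5.rei}.

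All remaining terms produced by the manipulation should be residuals in which a factor of $(\rho_i - \bar\rho_i)$, $(\theta - \bar\theta)$, $(u_i - \bar u_i)$, or $\na(\log\theta - \log\bar\theta)$ multiplies an $L^\infty$ quantity — $\bar u_i$, $\na\log\bar\theta$, or a bounded smooth function of $(\bar{\bm\rho},\bar\theta)$ — together with Lipschitz defects $\kappa(\theta)-\kappa(\bar\theta)$ that are controlled by $|\theta-\bar\theta|$ through the Lipschitz assumption on $\kappa$. Cauchy--Schwarz and Young's inequality then absorb the $(u_i-\bar u_i)$ and $\na(\log\theta-\log\bar\theta)$ factors into the good dissipations at the cost of halving their constants, leaving a right-hand side of the form $C\int(\sum_i(\rho_i-\bar\rho_i)^2 + (\theta-\bar\theta)^2)\,\dx\,\ds$ as claimed. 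The main obstacle will be the second paragraph above: one must reorganise the flux differences, originally expressed as $A_{ij}(\bm\rho,\theta)\na q_j - A_{ij}(\bar{\bm\rho},\bar\theta)\na\bar q_j$ (and analogously for the energy flux), into the relative quadratic form, by repeatedly adding and subtracting mixed terms and treating the differences $A_{ij}(\bm\rho,\theta)-A_{ij}(\bar{\bm\rho},\bar\theta)$ as Lipschitz residuals (using boundedness of $M^{BD}$ in $\bm\rho$). The crucial algebraic trick, as in \eqref{3.I458}, is to always combine $\na(q_i-\bar q_i)$ with the temperature piece $\na(w-\bar w)/m_i$ so that the summand falls into the subspace $L$ and the positivity constant $\mu$ from \eqref{3.semidef} is available.
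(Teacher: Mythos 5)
Your high-level blueprint matches the paper: subtract the entropy equality of Lemma \ref{lem.eis} from the entropy inequality of Lemma \ref{lem.eiw}, test the weak mass and energy balances with $\bar q_i$ (up to an additive constant) and $-1/\bar\theta$, and then absorb the residual terms by Young's inequality into the two good dissipations. Where your proposal goes wrong is the technical heart of the argument: the identification and lower bound of the relative dissipation.

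You propose to obtain the relative dissipation in the form
$\int\sum_{i,j}A_{ij}(\bm\rho,\theta)\,\na\big((q_i-\bar q_i)+\tfrac{w-\bar w}{m_i}\big)\cdot\na\big((q_j-\bar q_j)+\tfrac{w-\bar w}{m_j}\big)$,
by reorganising flux differences $A_{ij}(\bm\rho)\na q_j - A_{ij}(\bar{\bm\rho})\na\bar q_j$ and treating $A_{ij}(\bm\rho)-A_{ij}(\bar{\bm\rho})$ as ``Lipschitz residuals.'' This step does not close. First, $M^{BD}(\bm\rho)$ is only known to be \emph{bounded} (the paper cites boundedness of $M^{BD}_{ij}$ from \cite[Lemma~10]{HJT22}); Lipschitz continuity in $\bm\rho$ is neither assumed nor easy to obtain near the degenerate set $\{\rho_i=0\}$, and without it the term $A_{ij}(\bm\rho)-A_{ij}(\bar{\bm\rho})$ cannot be dominated by $|\bm\rho-\bar{\bm\rho}|$. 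Second, even if that issue were resolved, your claim that the relative quadratic form ``dominates $\mu_M\sum_i\rho_i|u_i-\bar u_i|^2$ via the same algebraic manipulation'' is not correct: $\bar u_i$ is defined through $M^{BD}(\bar{\bm\rho})$ and $\bar\rho_i$, so multiplying $\na\big((\bar q_j-\cdot)+(\bar w-\cdot)/m_j\big)$ by $\sqrt{\rho_j}$ and applying $M^{BD}(\bm\rho)$ does not reproduce $\sqrt{\rho_i}\,\bar u_i$; you get $\bar u_i$ contaminated by $\sqrt{\rho_j/\bar\rho_j}$ factors and by the mismatch between the two Bott--Duffin inverses.

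The paper avoids both difficulties. It never compares $A_{ij}(\bm\rho)$ with $A_{ij}(\bar{\bm\rho})$; instead it uses the friction form of the dissipation, $\tfrac12\sum_{i,j}b_{ij}\rho_i\rho_j|u_i-u_j|^2$ (from Lemma \ref{lem.eiw}, where $b_{ij}$ is \emph{constant}), expands it around $\bar u_i-\bar u_j$ to isolate $K_{41}=-\tfrac12\int\sum b_{ij}\rho_i\rho_j|(u_i-\bar u_i)-(u_j-\bar u_j)|^2$, and converts the strong solution's transport terms back into velocity form via the \emph{non-inverted} Maxwell--Stefan relation $-m_i\sum_j b_{ij}\bar\rho_i\bar\rho_j(\bar u_i-\bar u_j)-\bar\rho_i\na\log\bar\theta=\na\bar\rho_i$. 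The lower bound $K_{41}\ge\mu_M|P_L\bm Y|^2$ with $Y_j=\sqrt{\rho_j}(u_j-\bar u_j)$, together with the explicit computation $|P_L\bm Y|^2\ge\sum_i\rho_i|u_i-\bar u_i|^2 - C\sum_j(\rho_j-\bar\rho_j)^2$ (noting $\bm Y\notin L$ in general because $\sum_j\rho_j\bar u_j\ne0$), is what produces the $\mu_M$-dissipation plus a harmless $(\rho_j-\bar\rho_j)^2$ residual. You should replace your $A_{ij}$-based reorganisation by this $b_{ij}$-based one; the rest of your outline (Young absorption of $K_{11}$, $K_{13}$, and the $L_k$-type residuals, using the $L^\infty$ bounds on $\bar u_i$, $\na\log\bar\theta$, $\theta$ and the Lipschitz continuity of $\kappa$) is consistent with the paper.
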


\begin{proof} We use the test functions $\phi_i=m_i^{-1}\log(\bar\rho_i/m_i)-c_w\log\bar\theta$
and $\phi_0=-1/\bar\theta$ in the weak formulations satisfied by $\rho_i-\bar\rho_i$ and
$\rho(\theta-\bar\theta)$, respectively,
\begin{align*}
  \int_\Omega(\rho_i-\bar\rho_i)(t)\phi_i(t)\dx 
	&= \int_0^t\int_\Omega(\rho_i-\bar\rho_i)\pa_t\phi_i\dx\ds
	+ \int_0^t\int_\Omega(\rho_iu_i-\bar\rho_i\bar{u}_i)\cdot\na\phi_i \dx\ds, \\
	\int_\Omega c_w\rho(\theta-\bar\theta)(t)\phi_0(t)\dx
	&= \int_0^t\int_\Omega c_w\rho(\theta-\bar\theta)\pa_t\phi_0\dx\ds
	- \int_0^t\int_\Omega(\kappa\na\theta-\bar\kappa\na\bar\theta)\cdot\na\phi_0\dx\ds \\
	&\phantom{xx}{}
	+ \int_0^t\int_\Omega\sum_{j=1}^n(h_ju_j-\bar{h}_j\bar{u}_j)\cdot\na\phi_0\dx\ds,
\end{align*}
where $h_j=(c_w+1/m_j)\rho_j\theta$, $\bar{h}_j=(c_w+1/m_j)\bar\rho_j\bar\theta$, 
and $\kappa=\kappa(\theta)$, $\bar\kappa=\kappa(\bar\theta)$. 
Strictly speaking, we cannot use $\phi_i$ as a 
test function since $\log\bar\rho_i$ and $1/\bar\theta$ may be not integrable. 
However, we can use
a density argument similarly as in the proof of \cite[Lemma 8]{HJT22}. 
Then, summing over $i=1,\ldots,n$,
\begin{align*}
  \int_\Omega&\bigg\{\sum_{i=1}^n(\rho_i-\bar\rho_i)(t)\bigg(\frac{1}{m_i}
	\log\frac{\bar\rho_i}{m_i} - c_w\log\bar\theta\bigg)(t) - c_w\rho
	\frac{\theta-\bar\theta}{\bar\theta}(t)\bigg\}\dx \\
	&= \int_0^t\int_\Omega\bigg\{\sum_{i=1}^n\bigg((\rho_i-\bar\rho_i)
	\frac{\pa_t\bar\rho_i}{m_i\bar\rho_i} + (\rho_iu_i-\bar\rho_i\bar{u}_i)
	\cdot\frac{\na\bar\rho_i}{m_i\bar\rho_i}\bigg)
	+ c_w\rho(\theta-\bar\theta)\pa_t\bigg({-\frac{1}{\bar\theta}}\bigg)\bigg\}\dx\ds \\
	&\phantom{xx}{}- \int_0^t\int_\Omega(\kappa\na\theta-\bar\kappa\na\bar\theta)\cdot\na
	\bigg({-\frac{1}{\bar\theta}}\bigg)\dx\ds
	+ \int_0^t\int_\Omega\sum_{j=1}^n(h_ju_j-\bar{h}_j\bar{u}_j)\cdot\na
	\bigg({-\frac{1}{\bar\theta}}\bigg)\dx\ds.
\end{align*}
We subtract this identity and the entropy equality from Lemma \ref{lem.eis} for 
$(\bar{\bm\rho},\bar\theta)$ from the entropy inequality for $(\bm\rho,\theta)$
obtained in Lemma \ref{lem.eiw}
and insert equations \eqref{1.mass}--\eqref{1.energy} to replace the time derivatives
$\pa_t\bar\rho_i$ and $\pa_t({-1/\bar\theta})$. A computation shows that
\begin{align}\label{5.H}
  & H((\bm\rho,\theta)(t)|(\bar{\bm\rho},\bar\theta)(t)) \le K_1\cdots+K_5, \quad\mbox{where} \\
  & K_1 = -\int_0^t\int_\Omega\big(\kappa|\na\log\theta|^2 
	- \bar\kappa|\na\log\bar\theta|^2\big)\dx\ds
	+ \int_0^t\int_\Omega\bar\kappa\na\bar\theta\cdot\na\bigg(\frac{\theta}{\bar\theta^2}
	- \frac{1}{\bar\theta}\bigg)\dx\ds \nonumber \\
	&\phantom{xxxx}{}+ \int_0^t\int_\Omega(\kappa\na\theta-\bar\kappa\na\bar\theta)
	\cdot\na\bigg({-\frac{1}{\bar\theta}}\bigg)\dx\ds, \nonumber \\
	& K_2 = -\int_0^t\int_\Omega\sum_{i=1}^n\frac{\bar\rho_i\bar{u}_i}{m_i}\cdot
	\na\bigg(\frac{\rho_i}{\bar\rho_i}\bigg)\dx\ds
	- \int_0^t\int_\Omega\sum_{i=1}^n\frac{\na\bar\rho_i}{m_i\bar\rho_i}
	\cdot(\rho_iu_i-\bar\rho_i\bar{u}_i)\dx\ds, \nonumber \\
	& K_3 = -\int_0^t\int_\Omega\sum_{i=1}^n\bar{h}_i\bar{u}_i\cdot\na
	\bigg(\frac{\theta}{\bar\theta^2}	- \frac{1}{\bar\theta}\bigg)\dx\ds
	- \int_0^t\int_\Omega\sum_{i=1}^n(h_iu_i-\bar{h}_i\bar{u}_i)\cdot\na
	\bigg({-\frac{1}{\bar\theta}}\bigg)\dx\ds, \nonumber \\
	& K_4 = -\frac12\int_0^t\int_\Omega\sum_{i,j=1}^n b_{ij}\rho_i\rho_j
	|u_i-u_j|^2\dx\ds, \nonumber \\
	& K_5 = \frac12\int_0^t\int_\Omega\sum_{i,j=1}^n b_{ij}\bar\rho_i\bar\rho_j
	\big|\bar{u}_i-\bar{u}_j\big|^2\dx\ds. \nonumber 
\end{align}
The term $K_1$ can be rewritten as
\begin{align*}
  K_1 &= -\int_0^t\int_\Omega\frac{1}{\bar\theta}(\kappa\bar\theta-\bar\kappa\theta)
	\na(\log\theta-\log\bar\theta)\cdot\na\log\bar\theta\dx\ds \\
	&\phantom{xx}{}-\int_0^t\int_\Omega\kappa\big|\na(\log\theta-\log\bar\theta)\big|^2\dx\ds \\
	&\phantom{xx}{}+ \int_0^t\int_\Omega\frac{\theta-\bar\theta}{\bar\theta}\na\log\bar\theta
 	\cdot(\kappa\na\log\theta-\bar\kappa\na\log\bar\theta)\dx\ds
	=: K_{11}+K_{12}+K_{13}.
\end{align*}
The algebraic system \eqref{1.velo} with $d_i=\na(\rho_i\theta)/m_i$ can be formulated as
$$
  -m_i\sum_{j=1}^n b_{ij}\bar\rho_i\bar\rho_j(\bar{u}_i-\bar{u}_j) 
	- \bar\rho_i\na\log\bar\theta = \na\bar\rho_i.
$$
This allows us to rewrite $K_2$:
\begin{align*}
  K_2 & = \int_0^t\int_\Omega \sum_{i,j=1}^nb_{ij}\rho_i\rho_j(u_i-u_j)\cdot\bar{u}_i
	\dx\ds - \int_0^t\int_\Omega \sum_{i,j=1}^nb_{ij}\rho_i\bar{\rho}_j(\bar{u}_i-\bar{u}_j)
	\cdot\bar{u}_i\dx\ds \\
  &\phantom{xx}{} + \int_0^t\int_\Omega \sum_{i,j=1}^nb_{ij}\rho_i\bar{\rho}_j
	(\bar{u}_i-\bar{u}_j)\cdot u_i\dx\ds - \int_0^t\int_\Omega \sum_{i,j=1}^nb_{ij}
	\bar{\rho}_i\bar{\rho}_j(\bar{u}_i-\bar{u}_j)\cdot\bar{u}_i\dx\ds \\
  &\phantom{xx}{} + \int_0^t\int_\Omega\sum_{i=1}^n\frac{1}{m_i}\rho_i\nabla\log\theta
	\cdot\bar{u}_i\dx\ds - \int_0^t\int_\Omega\sum_{i=1}^n\frac{1}{m_i}\rho_i\nabla
	\log\bar{\theta}\cdot\bar{u}_i\dx\ds \\
  &\phantom{xx}{} + \int_0^t\int_\Omega\sum_{i=1}^n\frac{1}{m_i}\rho_i\nabla\log\bar{\theta}
	\cdot u_i\dx\ds - \int_0^t\int_\Omega\sum_{i=1}^n\frac{1}{m_i}\bar{\rho}_i\nabla
	\log\bar{\theta}\cdot\bar{u}_i\dx\ds \\
	&=: K_{21}+\cdots+K_{28}.
\end{align*}
Furthermore, it follows from $h_i=(c_w+1/m_i)\rho_i\theta$ and 
$\sum_{i=1}^n\rho_iu_i=\sum_{i=1}^n\bar\rho_i\bar{u}_i=0$ that
\begin{align*}
  K_3 &= -\int_0^t\int_\Omega\sum_{i=1}^n\bar{h}_i\bar{u}_i\cdot
	\na\bigg(\frac{\theta}{\bar\theta^2}\bigg)\dx\ds
	- \int_0^t\int_\Omega\sum_{i=1}^n h_iu_i\cdot\na\bigg({-\frac{1}{\bar\theta}}\bigg)\dx\ds \\
	&= -\int_0^t\int_\Omega\sum_{i=1}^n\bigg(c_w+\frac{1}{m_i}\bigg)\bar\theta\bar\rho_i
	\bar{u}_i\cdot\na\bigg(\frac{\theta}{\bar\theta^2}\bigg)\dx\ds \\
	&\phantom{xx}{}- \int_0^t\int_\Omega\sum_{i=1}^n\bigg(c_w+\frac{1}{m_i}\bigg)\theta\rho_i
	u_i\cdot\na\bigg({-\frac{1}{\bar\theta}}\bigg)\dx\ds \\
	&= -\int_0^t\int_\Omega\sum_{i=1}^n\frac{\bar\rho_i\bar\theta}{m_i}
	\bar{u}_i\cdot\na\bigg(\frac{\theta}{\bar\theta^2}\bigg)\dx\ds
	- \int_0^t\int_\Omega\sum_{i=1}^n\frac{\rho_i\theta}{m_i}u_i\cdot
	\na\bigg({-\frac{1}{\bar\theta}}\bigg)\dx\ds \\
  &= -\int_0^T\int_\Omega\sum_{i=1}^n\frac{\bar{\rho}_i\bar{u}_i}{m_i\bar\theta}
	\cdot\na\theta\dx\ds+2\int_0^T\int_\Omega\sum_{i=1}^n\frac{\bar{\rho}_i\bar{u}_i
	\theta}{m_i\bar\theta^2}\cdot\nabla\bar\theta\dx\ds \\
  &\phantom{xx}-\int_0^T\int_\Omega\sum_{i=1}^n\frac{\rho_iu_i\theta}{m_i\bar\theta^2}
	\cdot\na\bar\theta\dx\ds.
\end{align*}
We reformulate $K_4$ as
\begin{align*}
  K_4 &= -\frac12\int_0^t\int_\Omega\sum_{i,j=1}^n b_{ij}\rho_i\rho_j
	\big|(u_i-\bar{u}_i)-(u_j-\bar{u}_j)\big|^2\dx\ds \\
	&\phantom{xx}{}+ \frac12\int_0^t\int_\Omega\sum_{i,j=1}^n 
	b_{ij}\rho_i\rho_j|\bar{u}_i-\bar{u}_j|^2\dx\ds \\
	&\phantom{xx}{}- \int_0^t\int_\Omega\sum_{i,j=1}^n b_{ij}\rho_i\rho_j
	(u_i-u_j)\cdot(\bar{u}_i-\bar{u}_j)\dx\ds
	=: K_{41}+K_{42}+K_{43}.
\end{align*}
A long but straightforward computation shows that 
\begin{align*}
    K_{21}&+K_{22}+ K_{23}+K_{24}+K_{42}+K_{43}+K_5 \\
    & = -\int_0^T\int_\Omega\sum_{i,j=1}^nb_{ij}\rho_i(\rho_j-\bar{\rho}_j)(u_i-\bar{u}_i)
		\cdot(\bar{u}_i-\bar{u}_j)\dx\ds=:L_1
\end{align*} 
and 
\begin{align*}         
    K_{25} &+K_{26} + K_{27}+K_{28}+K_3=\int_0^T\int_\Omega\sum_{i=1}^n\frac{1}{m_i}
		(\rho_i-\bar{\rho}_i)(\na\log\theta-\na\log\bar\theta)\cdot\bar{u}_i\dx\ds \\
    &\phantom{xx}{} + \int_0^T\int_\Omega\sum_{i=1}^n\frac{1}{m_i}\bar\rho_i\bar u_i\cdot
		(\na\log\theta-\na\log\bar\theta)\bigg(1-\frac{\theta}{\bar\theta}\bigg)\dx\ds \\
    &\phantom{xx}{} + \int_0^T\int_\Omega\sum_{i=1}^n\frac{1}{m_i}\rho_i(u_i-\bar u_i)
		\cdot\nabla\log\bar\theta\bigg(1-\frac{\theta}{\bar\theta}\bigg)\dx\ds \\
    &\phantom{xx}{} + \int_0^T\int_\Omega\sum_{i=1}^n\frac{1}{m_i}(\rho_i-\bar\rho_i)\bar u_i
		\cdot\nabla\log\bar\theta\bigg(1-\frac{\theta}{\bar\theta}\bigg)\dx\ds \\
	&=: L_2+L_3+L_4+L_5.
\end{align*}
Inserting these expressions into \eqref{5.H}, putting $K_{12}$ on the left-hand side, 
and rearranging the terms, we find that
\begin{align}\label{5.H2}
  & H((\bm\rho,\theta)(t)|(\bar{\bm\rho},\bar\theta)(t)) 
	+ \frac12\int_0^t\int_\Omega\sum_{i,j=1}^n b_{ij}\rho_i\rho_j
	\big|(u_i-\bar{u}_i)-(u_j-\bar{u}_j)\big|^2\dx\ds \\
	&\phantom{xx}{}+ \int_0^t\int_\Omega\kappa|\na(\log\theta-\log\bar\theta)|^2\dx\ds 
	\leq K_{11}+K_{13}+L_1+\cdots+L_5. \nonumber 
\end{align}

The second term on the left-hand side can be bounded from below. Indeed, it follows from
the symmetry of $(b_{ij})$, definition \eqref{3.M} of $M_{ij}$, and the positive
definiteness \eqref{3.posdefM} of $M$ on $L$ that
\begin{align*}
  \frac12\sum_{i,j=1}^n& b_{ij}\rho_i\rho_j\big|(u_i-\bar{u}_i)-(u_j-\bar{u}_j)\big|^2 \\
	&= \sum_{i=1}^n\bigg(\sum_{j=1,\,j\neq i}^n b_{ij}\rho_j\bigg)\rho_i|u_i-\bar{u}_i|^2
	- \sum_{i,j=1,\,i\neq j}^n b_{ij}\rho_i\rho_j(u_i-\bar{u}_i)\cdot(u_j-\bar{u}_j) \\
	&= \sum_{i,j=1}^n M_{ij}\sqrt{\rho_i}(u_i-\bar{u}_i)\cdot\sqrt{\rho_j}(u_j-\bar{u}_j)
	\ge \mu_M|P_L \bm{Y}|^2,
\end{align*}
where $Y_j=\sqrt{\rho_j}(u_j-\bar{u}_j)$. The norm of the projection is computed
according to
\begin{align*}
  |P_L\bm{Y}|^2 &= |\bm{Y}|^2 - |P_{L^\perp}\bm{Y}|^2 = \sum_{i=1}^n\rho_i|u_i-\bar{u}_i|^2
	- \sum_{i=1}^n\frac{\rho_i}{\rho^2}\bigg|\sum_{j=1}^n\rho_j(u_j-\bar{u}_j)\bigg|^2 \\
	&= \sum_{i=1}^n\rho_i|u_i-\bar{u}_i|^2 
	- \frac{1}{\rho}\bigg|\sum_{j=1}^n(\rho_j-\bar\rho_j)\bar{u}_j\bigg|^2
	\ge \sum_{i=1}^n\rho_i|u_i-\bar{u}_i|^2 - C_1\sum_{j=1}^n(\rho_j-\bar\rho_j)^2,
\end{align*}
where we used $\sum_{i=1}^n\rho_iu_i=0$ in the third equality, and $C_1>0$ depends on
$\rho_*$ and the $L^\infty(\Omega_T)$ norms of $\bar{u}_j$, $j=1,\ldots,n$.
Consequently,
\begin{align}\label{5.left}
  \frac12\int_0^t&\int_\Omega\sum_{i,j=1}^n b_{ij}\rho_i\rho_j
	\big|(u_i-\bar{u}_i)-(u_j-\bar{u}_j)\big|^2\dx\ds \\
	&\ge \mu_M\int_0^t\int_\Omega\sum_{i=1}^n\rho_i|u_i-\bar{u}_i|^2\dx\ds
	- C_2\int_0^t\int_\Omega\sum_{j=1}^n(\rho_j-\bar\rho_j)^2\dx\ds. \nonumber
\end{align}

We turn to the estimation of the terms on the right-hand side of \eqref{5.H2}.
By the Lipschitz continuity of $\kappa$ and Young's inequality, $K_{11}$ is estimated as
\begin{align*}
  K_{11} &= -\int_0^t\int_\Omega\frac{1}{\bar\theta}\big(\kappa(\bar\theta-\theta)
  + (\kappa-\bar\kappa)\theta\bigg)\na\bar\theta\cdot\na(\log\theta-\log\bar\theta)\dx\ds \\
	&\le \frac{c_\kappa}{8}\int_0^t\int_\Omega|\na(\log\theta-\log\bar\theta)|^2\dx\ds
	+ C_3\int_0^t\int_\Omega(\theta-\bar\theta)^2\dx\ds,
\end{align*}
and $C_3>0$ depends on $c_\kappa$ (see Assumption (A4)), and the $L^\infty(\Omega_T)$ norms
of $\theta$ and $\na\log\bar\theta$. A similar estimate shows that
\begin{align*}
  K_{13} &= -\int_0^t\int_\Omega\frac{\theta-\bar\theta}{\bar\theta}\big(
	\kappa\na(\log\theta-\log\bar\theta) + (\kappa-\bar\kappa)\na\log\bar\theta\big)
	\cdot\na\log\bar\theta\dx\ds \\
	&\le \frac{c_\kappa}{8}\int_0^t\int_\Omega|\na(\log\theta-\log\bar\theta)|^2\dx\ds
	+ C_4\int_0^t\int_\Omega(\theta-\bar\theta)^2\dx\ds, \\
	L_2 &\le \frac{c_\kappa}{8}\int_0^t\int_\Omega|\na(\log\theta-\log\bar\theta)|^2\dx\ds
	+ C_5\int_0^t\int_\Omega\sum_{i=1}^n(\rho_i-\bar\rho_i)^2\dx\ds, \\
	L_3 &\le \frac{c_\kappa}{8}\int_0^t\int_\Omega|\na(\log\theta-\log\bar\theta)|^2\dx\ds
	+ C_6\int_0^t\int_\Omega(\theta-\bar\theta)^2\dx\ds,
\end{align*}
observing that $C_4$ depends on $c_\kappa$, $\delta$ and the $L^\infty(\Omega_T)$ 
norms of $\theta$, $\nabla\log\bar\theta$, and $\bar{u}_i$,
$C_5$ depends on the $L^\infty(\Omega_T)$ norms of $\bar{u}_i$,  
and $C_6$ depends on $c_\kappa$, $\rho^*$, $\delta$, and the $L^\infty(\Omega_T)$ norms of 
$\bar{u}_i$ ($i=1,\ldots,n$). Moreover, by Young's inequality again,
\begin{align*}
  L_1 &\le \frac{\mu_M}{4}\int_0^t\int_\Omega\sum_{i=1}^n\rho_i|u_i-\bar{u}_i|^2\dx\ds
	+ C_7\int_0^t\int_\Omega\sum_{i=1}^n(\rho_i-\bar\rho_i)^2\dx\ds, \\
	L_4 &\le \frac{\mu_M}{4}\int_0^t\int_\Omega\sum_{i=1}^n\rho_i|u_i-\bar{u}_i|^2\dx\ds
	+ C_8\int_0^t\int_\Omega(\theta-\bar\theta)^2
	\dx\ds,
\end{align*}
where $C_7$ depends on $\rho^*$, $\mu_M$, and the $L^\infty(\Omega_T)$ norms of 
$\bar{u}_i$ ($i=1,\ldots,n$), while $C_8$ depends on $\delta$, $\rho^*$, and the 
$L^\infty(\Omega_T)$ norm of $\na\log\bar\theta$.
Finally, 
\begin{align*}
    L_5\leq C_9\int_0^T\int_\Omega\sum_{i=1}^n(\rho_i-\bar\rho_i)^2\dx\ds
		+ C_{10}\int_0^T\int_\Omega(\theta-\bar\theta)^2\dx\ds,
\end{align*} 
where $C_9>0$ depends on the $L^\infty(\Omega_T)$ norms of $\bar{u}_i$ ($i=1,\ldots,n$),
and $C_{10}$ depends on $\delta$ and the $L^\infty(\Omega_T)$ norm of $\na\log\bar\theta$.

Summarizing the previous estimations, we infer from \eqref{5.H2}, \eqref{5.left}, 
and the lower bound for $\kappa$ (see Assumption (A4)) the conclusion.
\end{proof}

It remains to estimate the right-hand side of \eqref{5.rei} in terms of the relative entropy.
For this, we observe that, by \cite[Lemma 16]{HJT22},
$$
  \int_\Omega\sum_{i=1}^n\frac{1}{m_i}\bigg(\rho_i\log\frac{\rho_i}{\bar\rho_i}
	- (\rho_i-\bar\rho_i)\bigg)\dx \ge C\int_\Omega\sum_{i=1}^n(\rho_i-\bar\rho_i)^2\dx.
$$
Furthermore, for all functions $f\in C^1(\R)$ with $f'(1)=0$,
\begin{align*}
  f(s)-f(1) &= (s-1)\int_0^1 f'(\sigma(s-1)+1)\mathrm{d}\sigma
	= (s-1)\int_0^1 f'(\tau(s-1)+1)\big|_{\tau=0}^\sigma\mathrm{d}\sigma \\
	&= (s-1)^2\int_0^1\int_0^\sigma f''(\tau(s-1)+1)\mathrm{d}\tau\mathrm{d}\sigma.
\end{align*}
This yields, choosing $f(s)=-\log s+s-1$ and $s=\theta/\bar\theta$,
$$
  \int_\Omega c_w\rho\bigg(-\log\frac{\theta}{\bar\theta} + \frac{1}{\bar\theta}
	(\theta-\bar\theta)\bigg)\dx \ge \int_\Omega c_w\rho
	\frac{(\theta-\bar\theta)^2}{\max\{\theta,\bar\theta\}^2}\dx
	\ge C\int_\Omega(\theta-\bar\theta)^2\dx,
$$
where $C>0$ depends on the lower bound for $\bar\theta$ in $\Omega_T$. 
By definition of the relative entropy, we conclude from Lemma \ref{lem.rei} that
\begin{align*}
  H&((\bm\rho,\theta)(t)|(\bar{\bm\rho},\bar\theta)(t)) 
	+ \frac{\mu_M}{2}\int_0^t\int_\Omega\sum_{i=1}^n\rho_i|u_i-\bar{u}_i|^2\dx\ds \\
	&{}+ \frac{c_\kappa}{2}\int_0^t\int_\Omega|\na(\log\theta-\log\bar\theta)|^2\dx\ds
	\le C\int_0^t H(\bm\rho,\theta|\bar{\bm\rho},\bar\theta)\ds.
\end{align*}
Gronwall's lemma shows that $H((\bm\rho,\theta)(t)|(\bar{\bm\rho},\bar\theta)(t))=0$
and hence $\bm\rho(t)=\bar{\bm\rho}(t)$ and $\theta(t)=\bar\theta(t)=0$ in $\Omega$
for $t>0$. This finishes the proof.


\end{document}